% ----------------------------------------------------------------
% Article Class (This is a LaTeX2e document)  ********************
% ----------------------------------------------------------------
\documentclass[11pt]{article}
\usepackage[english]{babel}
\usepackage{amsmath,amsthm}
\usepackage{amsfonts}
\usepackage{cite}
\usepackage{bbm}
\usepackage{color}
\usepackage{mathrsfs}

% THEOREMS -------------------------------------------------------
\newtheorem{Theorem}{Theorem}[section]
\newtheorem{Corollary}[Theorem]{Corollary}
\newtheorem{Lemma}[Theorem]{Lemma}

\theoremstyle{definition}
\newtheorem{Definition}[Theorem]{Definition}
\newtheorem{assumption}[Theorem]{Assumption}
\theoremstyle{remark}
\newtheorem{Remark}[Theorem]{Remark}
\numberwithin{equation}{section}
% ----------------------------------------------------------------
\begin{document}
\def\Pro{{\mathbb{P}}}
\def\E{{\mathbb{E}}}
\def\e{{\varepsilon}}
\def\ds{\displaystyle}
\def\sspan{{\textnormal{span}}}
\def\nat{{\mathbb{N}}}
\def\Tr{{\text{Tr}}}

\title{Rare event simulation via importance sampling for linear SPDE's}%
\author{Michael Salins and Konstantinos Spiliopoulos\footnote{Department of Mathematics \& Statistics, Boston University, 111 Cummington Street, Boston MA 02215, E-mail: msalins@bu.edu, kspiliop@math.bu.edu. K.S. was partially supported  by the National Science Foundation CAREER award DMS 1550918.}}%
%\thanks{}%
%\date{}%
% ----------------------------------------------------------------
\maketitle
\begin{abstract}
The goal of this paper is to develop provably efficient importance sampling Monte Carlo methods for the estimation of rare events within the class of linear stochastic partial differential equations (SPDEs). We find that if a spectral gap of appropriate size exists, then one can identify a lower dimensional manifold where the rare event takes place. This allows one to build importance sampling changes of measures that perform provably well even pre-asymptotically (i.e. for small but non-zero size of the noise) without degrading in performance due to infinite dimensionality or due to long simulation time horizons. Simulation studies supplement and illustrate the theoretical results.
\end{abstract}

% ----------------------------------------------------------------
\section{Introduction}
Consider the linear stochastic parabolic equation for $(t,\xi) \in [0,+\infty)\times \mathcal{O}$, where $\mathcal{O} \subset \mathbb{R}^d$ is a bounded, regular domain,
\begin{equation} \label{eq:intro-X}
  \begin{cases}
  \frac{\partial{X^\e}}{\partial t}(t,\xi) = A X^\e(t,\xi)  + \sqrt{\e} B \frac{\partial w}{\partial t}(t,\xi)\\
  X^\e(t,\xi) = 0, \ \xi \in \partial \mathcal{O}, \ \ \ \ X^\e(0,\xi) = x(\xi).
  \end{cases}
\end{equation}
The object $w(t)$ is a cylindrical Wiener process and $Q=BB^{\star}$ is the covariance operator of the noise. Its formal time derivative is a noise that is white in time and $Q$ correlated in space. Let $H =L^2(\mathcal{O})$ and define $A$ to be an unbounded linear operator that is self-adjoint such that there exists a sequence of eigenvalues $\{\alpha_{k}\}_{k\geq 1}$  and a sequence of eigenfunctions $\{e_{k}(\xi)\}_{k\geq 1}$ (that form a complete orthonormal basis of $H$) with the property that
\begin{equation}
  A e_k = -\alpha_k e_k, \ \ \ \ 0< \alpha_1 \leq \alpha_2 \leq \alpha_3 \leq ...
\end{equation}

In this way, $A$ can be an unbounded operator and it is the generator of the analytic semigroup $e^{tA}$, which has the property that
\[e^{tA} e_k = e^{-\alpha_k t} e_k.\]
A cylindrical Wiener process can be defined as the formal sum
\[w(t,\xi) = \sum_{k=1}^\infty  e_k(\xi) \beta_k(t)\]
where $\{\beta_k\}$ is a family of i.i.d. Brownian motions on some probability space $(\Omega, \mathcal{F},\Pro)$. $B:H \to H$ is a bounded linear operator which is positive definite and diagonalized by the eigenbasis $\{e_k\}$. In particular there is a collection of positive eigenvalues such that
\[B e_k = \lambda_k e_k.\]

\begin{assumption} \label{assum:eigens}
  The eigenvalues $\alpha_k$ and $\lambda_k$ satisfy
  \begin{equation}
    \sum_{k=1}^\infty \frac{\lambda_k^2}{\alpha_k} < +\infty.
  \end{equation}
\end{assumption}
Notice that Assumption \ref{assum:eigens} is equivalent to assuming $\left|A^{-1/2}B\right|_{2}<\infty$ where
 \[|A^{-1/2}B|_2^2 = \sum_{k=1}^\infty \left|A^{-1/2}B e_k\right|_H^2\]
is the Hilbert-Schmidt norm.
This assumption guarantees that a $L^2(\mathcal{O})$-valued solution to \eqref{eq:intro-X} exists. In the case that $A$ is the Laplace operator and $\mathcal{O}$ is sufficiently regular, $\alpha_k \sim k^{2/d}$, where $d$ is the spatial dimension. This means that in the case of spatial dimension $d\geq 2$, the eigenvalues of $B$ need to decay to zero. In the case $d=1$, $B$ can be the identity (white-noise case) and $B$ could even be unbounded.
Using this notation we can suppress the spatial variable by considering $X^\e$ as a stochastic process on the Hilbert space $H$ that solves the equation
\begin{equation}
  \begin{cases}
    dX^\e(t) = A X^\e(t) dt + \sqrt{\e} Bdw(t)\\
    X^\e(0)=x,
  \end{cases}
\end{equation}
the mild solution to \eqref{eq:intro-X} is given by
\begin{equation}
  X^\e(t) = e^{tA}x  + \sqrt{\e}\int_0^t e^{(t-s)A}Bdw(s)
\end{equation}
and we are interested in calculating the probability of exit from certain subsets of $H$ via Monte Carlo. Let $D \subset H$ and define $\tau^\e_x$ to be the exit time
\begin{equation}
  \tau^\e_x = \inf\{t>0: X^\e_x(t) \not \in D \}
\end{equation}
where $X^\e_x$ denotes the solution to $X^\e$ with initial condition $x$.

In this paper we focus on the case where $D=\{x\in H: |x|_H\leq L\}$ is the $L^{2}(\mathcal{O};\mathbb{R})$ ball with radius $L\in(0,\infty)$.  We are interested in developing provably efficient importance sampling schemes to calculate quantities such as
\begin{equation} \label{eq:intro-theta}
  \theta^\e(x,T) = \Pro \left(  \tau^\e_x \leq T  \right).
\end{equation}

$\theta^\e(x,T)$ is the distribution function of the exit time $\tau^\e_x$ of $X^{\e}(t)$ from the $L^{2}(\mathcal{O};\mathbb{R})$ ball with radius $L\in(0,\infty)$. Since, $L>0$ and $O$ is an attractor for the $\e=0$ noiseless dynamical system, we are dealing with a rare event. Estimation of rare events such as (\ref{eq:intro-theta}) could be of interest in several application domains, ranging from material science, where large $L^2$ norms could be events that one would like to avoid, to overflow of power grids where $O$ represents the steady state of the system and departures from it are unwanted, or to quantifying transitions between metastable states in nonlinear models.

As $\e$ gets smaller the event becomes rarer, i.e. $\theta^\e(x,T)$ becomes smaller, which makes estimation of such events difficult. Large deviations theory, see \cite{F-W-book,DaP-Z}, deals with approximation of quantities such as $\e\log(\theta^\e(x,T))$ in the small noise regime, i.e., when $\e\downarrow 0$. However, it ignores the effect of prefactors, which can be significant. In particular, even in dimension one, it has been established in \cite{dsz-2014} that in metastable cases one may have to go to very small values of $\e$ before one starts observing good behavior (in the sense of variance reduction of the estimators). The reason for this behavior is that other parameters of the problem such as the time horizon $T$ are important in the prelimit and compete with $1/\e$. For reasons like this accelerated Monte Carlo methods, like importance sampling, that also account for the behavior in the prelimit (i.e., before $\e\rightarrow 0$) become important.

We want to estimate quantities like $\theta^\e(x,T)$ via importance sampling. As it will be described in Section \ref{S:IS}, importance sampling is a variance reduction technique in Monte-Carlo simulation whose goal is to minimize the variance of the estimator via appropriate changes of measure. In this paper we focus on the linear case. The problem is challenging even then, as the curse of dimensionality kicks in rather quickly if the change of measure is not done correctly. This is an issue additional to the issues related to effect of other parameters of the problem in the prelimit, such as $T$, see also \cite{dsz-2014} for a related discussion in the finite dimensional case.

The main novelty of this paper lies in the identification of a condition on the spectral gap under which we can prove that the rare event takes place in a lower dimensional manifold. In particular, as Theorem \ref{thm:min-traj-e1} indicates, if $\lambda_1\geq \lambda_k$ and $3\alpha_1 < \alpha_k$ for all $k\geq 2$, then the rare event takes place in the $e_{1}$ direction. Even if $\lambda_1$ is not the maximal eigenvalue of $B$, then Theorem \ref{thm:min-traj-long-time} guarantees that if $2\frac{\alpha_1}{\lambda_1^2} < \frac{\alpha_k}{\lambda_k^2}$, then for large enough time horizons the rare event occurs in the $e_1$ direction. We also remark here that for metastability problems one is typically interested in long time horizons. Forcing the importance sampling trajectories in the $e_{1}$ direction results in them exiting along the $e_{1}$ direction with high probability, see Theorem \ref{thm:conv-of-endpoint}, and the relative errors of such an importance sampling scheme do not degrade as the dimension of the Galerkin approximation gets large, see Theorems \ref{T:Scheme1} and \ref{T:Scheme2}. To accomplish this we essentially project the desired change of measure down to the $e_{1}$ direction. In addition, the relative errors of the Monte Carlo simulations for the suggested changes of measure also do not degrade as the time horizon $T$ gets large. This latter behavior is parallel to what was achieved in the one-dimensional setting of \cite{dsz-2014}.

 To the best of our knowledge the current work is the first one to address provably efficient importance sampling schemes for SPDEs that perform well both in the limit as the noise goes to zero, but in the prelimit as well and do not degrade in performance due to increased dimension. The construction of the proposed  change of measures is based on the subsolution method introduced in \cite{DupuisWang2}.

The rest of the paper is organized as follows. In Section \ref{S:IS} we recall the importance sampling technique appropriately tailored to the infinite dimensional setting and we mention issues that come up due to the fact that we are dealing with infinite dimensions. In Section \ref{S:CalculusOfVariationsLinearProblem} we solve the corresponding deterministic variational problem and we provide conditions on the time horizon and eigenvalues that guarantee that the rare event likely takes place in the $e_{1}$ direction. In Section \ref{S:WeakCompactness} we are studying the $\e-$dependent problem and prove that if only the $e_{1}$ direction is being forced and the spectral gap mentioned in Section \ref{S:CalculusOfVariationsLinearProblem} holds, then the exit from the $L^{2}$ ball happens in the close neighborhood of  the $e_{1}$ manifold for $\e$ small enough. This allows to prove a non-asymptotic upper bound of the second moment of the importance sampling estimator in Section \ref{S:IS_schemes}.

 In Section \ref{S:IS_schemes} we also propose two specific changes of measure with provably good bounds in performance that do not degrade with increasing dimension, nor with increasing time horizon $T$. The first scheme, Scheme 1, is motivated by the one-dimensional construction of \cite{dsz-2014}, whereas the second proposed scheme, Scheme 2, is on the one hand simpler to apply and to analyze and computationally faster, but on the other hand performs slightly worse than Scheme 1. In Section \ref{S:Numerics} we present extensive simulation studies that demonstrate the theory developed in this paper. Section \ref{S:Conclusions} is on conclusions and future work where we also discuss the issues and challenges that come up in the direct application of the results of this paper in the nonlinear case.  An Appendix follows with the proof of the pre-asymptotic bound for the  performance of Scheme 2, and with an estimate with detailed dependence on $\e,T$ and $\alpha_{k}$ for the Galerkin approximation.

\section{Importance sampling in infinite dimensions and issues that come up}\label{S:IS}
In this section we recall what importance sampling is in the context of infinite dimensional models and discuss some of the issues that come up and are unique to the infinite dimensional setting. Assume that $w(t)$ is adapted to a filtration, $\mathcal{F}_t$.

For any $\mathcal{F}_t$-adapted $u^\e \in L^2([0,T];H)$, we can define the change of measure
\begin{equation}
  \frac{d\bar{\Pro}^\e}{d\Pro} = \exp \left(\frac{1}{\sqrt{\e}} \int_0^T \left<u^\e(s), dw(s) \right>_H - \frac{1}{2\e} \int_0^T |u^{\e}(s)|_H^2 ds \right).
\end{equation}
By Girsanov's formula in infinite dimensions (see, for example, \cite{DaP-Z})
\begin{equation}
  \bar{w}(t) = w(t) - \frac{1}{\sqrt{\e}} \int_0^t u^{\e}(s) ds
\end{equation}
is a cylindrical Wiener process in $\bar{\Pro}^\e$ and
\[dX^\e(t) = (AX^\e(t)  + Bu^\e(t))dt + \sqrt{\e}Bd\bar{w}(t)\]
Because of this, we can estimate $\theta^\e$ given in \eqref{eq:intro-theta} by using the unbiased estimator
\begin{equation}
 \Gamma^\e(0,x,u^\e) = 1_{\{\tau^\e_x \leq T \}} \frac{d \Pro}{d\bar{\Pro}^\e},
\end{equation}
where the first two entries in $\Gamma^{\e}$ denote that the initial condition is $X^{\e}(0)=x$. Our goal is to find a sequence of controls $u^\e$ so that the Monte Carlo estimator has small relative error. To this end, we try to minimize
\begin{equation} \label{eq:variance-est}
  \mathcal{Q}^{\e}(0,x,u^\e) = \bar{\E}^\e \left[ 1_{\{\tau^\e_x \leq T \}} \left( \frac{d \Pro}{d \bar{\Pro}^\e} \right)^2 \right].
\end{equation}
The explicit representation for $d\Pro/d\bar{\Pro}^\e$ is
\[\frac{d\Pro}{d\bar{\Pro}^\e} = \exp \left(-\frac{1}{\sqrt{\e}} \int_0^T \left< u^\e(s), d\bar{w}(s)\right>_H - \frac{1}{2\e} \int_0^T |u^{\e}(s)|_H^2 ds \right).\]
In this case the second moment of the estimator is
\begin{align*}
  &\mathcal{Q}^{\e}(0,0,u^\e) = \\
  &\E\left[ \exp\left( -\frac{1}{\epsilon} \int_0^{\tau^\e} |u^\e(s)|_H^2 ds - \frac{2}{\sqrt{\e}} \int_0^{\tau^\e} \left<u^{\e}(s), d\bar{w}(s) \right>_H  \right) \mathbbm{1}_{\{\tau^\e\leq T\}} \right].
\end{align*}

By Girsanov's formula,
\[\frac{d\tilde{\Pro}^\e}{d\bar{\Pro}^\e}=\exp\left( -\frac{2}{\sqrt{\e}} \int_0^{\tau^\e}\left<u^\e(s),d\bar{w}(s)\right>_H -\frac{2}{\e} \int_0^{\tau^\e} |u^\e(s)|_H^2 ds \right)\]
is a change of measure under which
\[\tilde{w}(t) =  \bar{w}(t) + \frac{2}{\sqrt{\e}}\int_0^t u^\e(s)ds = w(t) + \frac{1}{\sqrt{\e}} \int_0^t u^\e(s)ds.\]
is a cylindrical Wiener process. In this way, $\mathcal{Q}^{\e}(0,0,u^\e)$ can be represented as
\[\mathcal{Q}^{\e}(0,0,u^\e) = \tilde{\E}^\e\left[\exp\left(\frac{1}{\e}\int_0^{{\tau}^\e}|u^\e(s)|_H^2 ds \right)\mathbbm{1}_{\{{\tau}^\e\leq T\}}\right]\]
and
\[d{X}^\e(t) = \left[A{X}^\e(t) - Bu^\e(t)  \right]dt + \sqrt{\e}Bd\tilde{w}(t).\]

We will focus in this paper on the case where $u^{\e}(s)=u^\e(s, X^\e(s))$ are feedback controls. By a variational principle (see \cite{bdm-2008}) and by calculations similar  to those in \cite{dsz-2014},
 \begin{equation} \label{eq:var-principle}
  -\e \log \left( \mathcal{Q}^{\e}(0,0,u^\e)\right) = \inf_{v\in \mathcal{A}} \E \int_0^{\hat{\tau}^{v,\e}}\left( \frac{1}{2}|v(s)|_H^2 ds -|u^\e(s,\hat{X}^{v,\e}(s))|_H^2 \right)ds
  \end{equation}
 where $\hat{X}^{v,\e}$ solves
 \begin{equation} \label{Eq:HatXprocess}
   d\hat{X}^{v,\e}(t) = [A\hat{X}^{v,\e}(t) +Bv (t) - Bu^\e(t,\hat{X}^{v,\e}(t))]dt + \sqrt{\e}BdW(t),
 \end{equation}
 and $\mathcal{A}$ is the set of $\mathcal{F}_t$-adapted $H$-valued processes  for which
 \[\hat{\tau}^{v,\e} = \inf\{t>0: |\hat{X}^{v,\e}(t)|_H\geq L\} \leq T \text{ with probability one},\]
 and
 \[\E\int_0^{\hat{\tau}^{v,\e}} |v(s)|_H^2 ds < +\infty.\]

%\section{Estimation away from rest points}\label{S:EstimationAwayFromRestPoints}
By the related work in the finite dimensional case, \cite[Section 3]{dsz-2014}, we know that asymptotically optimal changes of measure (i.e. with minimum variance as $\e\rightarrow 0$) can perform rather poorly in practice (i.e. for small but fixed $\e>0$). The reason for this behavior is the role of prefactors that can be rather significant if the change of measure does not account for them.
We wish to estimate
\[\theta^\e(x,T) = \Pro \left(  \tau^\e_x \leq T  \right),\]
where
\[\tau^\e_{x} = \inf\{t>0: |X^\e_x(t)|_H \geq L \}.\]
This is the probability that the $L^2(\mathcal{O})$ norm of $X^\e_x$ exceeds $L$ before time $T$.

 What we intend to show below is that the changes of measure that work well in one dimension do not work  well in the infinite dimensional case (or in high finite dimensional cases). Of course, this has to do with the curse of dimensionality. Let us define an object called the quasipotential $V: H \to [0,+\infty]$ by
\begin{equation}
  V(x) = \left|B^{-1}(-A)^{1/2}x\right|_H^2,
\end{equation}
using the convention that if $x \not \in B(-A)^{-1/2}(H)$, then $\left|B^{-1}(-A)^{1/2}x \right|_H^2 = +\infty.$ In \cite{dsz-2014}, importance sampling schemes for the one-dimensional case were designed using the one-dimensional quasipotential. What we shall show below is that a direct analogue of the finite dimensional approach to the infinite dimensional setting is not possible.

Consider the functional $U: H \to \mathbb{R}$ defined by
\begin{equation}
  U(x) = \frac{\alpha_1}{\lambda_1^2} L^2 - |B^{-1}(-A)^{1/2}x|_H^2.
\end{equation}
The Fr\'echet derivative of $U$ is
\[DU(x) = 2B^{-2}A x.\]
Let us define $B^{\star}$ to be the adjoint operator to $B$ and set
\[u(x) = -B^{\star} DU(x) = -2B^{-1}Ax .\]

This $U$ is chosen because it is a solution to the equation
\[
\mathbb{H}(x,DU(x)) = 0\]
where
 \begin{equation}
   \mathbb{H}(x,p) = \left<Ax,p\right>_H - \frac{1}{2}|B^{\star} p|_H^2.
 \end{equation}

By an infinite dimensional analogue to the arguments in \cite[Lemma A.1]{dsz-2014}, for any sufficiently regular function $Z$,
\begin{align} \label{eq:Q-eps-lower-bound}
&-\e \log(\mathcal{Q}^{\e}(0,x,u)) \geq \inf_{\{v: \hat{\tau}^{v,\e} \leq T\}}\Bigg( 2 Z(0,x) -2 \E Z(\hat{\tau}^{v,\e}, \hat{X}^{v,\e}(\hat{\tau}^{v,\e}))\nonumber\\
 &\qquad+ \E \left[\int_0^{\hat{\tau}^{v,\e}} 2 \mathcal{G}^\e[Z](s,\hat{X}^{v,\e}(s)) ds \right.\nonumber\\
 &\qquad\quad\left.-\int_0^{\hat{\tau}^{v,\e}} |B^{\star} DZ(s,\hat{X}^{v,\e}(s)) - B^{\star} DU(\hat{X}^{v,\e}(s))|_H^2 ds \right] \Bigg).
 \end{align}

 In the above expression,
 \begin{equation}\label{eq:Q-eps}
   \mathcal{G}^\e[Z](t,x) = \frac{\partial Z}{\partial t}(t,x) + \mathbb{H}(x,D_xZ(x)) + \frac{\e}{2} \text{Tr}(BB^\star D^2_xZ(t,x)).
 \end{equation}

 Notice that the expression for $\mathcal{G}^\e[Z]$ contains the term $\text{Tr}BB^\star D^2_xZ$. The presence of this term shows that the quasipotential cannot in general be used in an importance sampling scheme. By that we mean that if $Z(x) = U(x) = \frac{\alpha_1}{\lambda_1^2} L^2-|B^{-1}(-A)^{\frac{1}{2}}x|_{H}^{2}$ then $\mathcal{G}^\e[Z] = \e \text{Tr}BB^\star B^{-2}A = \e \text{Tr}A = -\infty$. This is clearly an issue, since then the upper bound for the second moment is $\infty$, which is useless information.

 But the situation is even worse than this. Because $H$ is infinite dimensional, unbounded operators like $A$ are not the only problem. In fact, if $\text{Tr}(BB^\star) =+\infty$, then any $Z(x)$ that is radially symmetric and has a negative second derivative has the property that $\mathcal{G}^\e[Z](x) = -\infty$. By radially symmetric, we mean that there exists $\varphi: \mathcal{R} \to \mathcal{R}$ such that $Z(x) = \varphi(|x|_H)$. In the radially symmetric case,
 \[\frac{\partial Z}{\partial x_k}  = \frac{x_k}{|x|_H} \varphi'(|x|_H), \ \ \frac{\partial^2 Z}{\partial x_k^2} = \frac{x_k^2}{|x|_H^2} \varphi''(|x|_H) + \left(\frac{1}{|x|_H^2} - \frac{x_k^2}{|x|_H^3} \right)\varphi'(|x|_H) .\]
 Therefore,
 \[\text{Tr} D^2_xZ(x) = \sum_{k=1}^\infty \frac{\partial^2 Z}{\partial x_k^2} = \varphi''(|x|_H^2) - \frac{1}{|x|_H}\varphi'(|x|_H) + \infty \frac{\varphi'(|x|_H)}{|x|_H}.\]
 In particular, if $Z(x) = cL^2- c|x|_H^2$ for some constants $L$ and $c$, then
 \[\Tr BB^\star D^2_x Z(x) = -2c\Tr BB^\star= -\infty.\]

 We need to find a control that has a finite trace second derivative. It turns out that the correct control is a projection of the control we described based on the unbounded operator $A$. Instead, we consider the functional
  \[U(x) = \frac{\alpha_1}{\lambda_1^2}L^2 - \frac{\alpha_1}{\lambda_1^2}\left<x,e_1\right>_H^2.\]
  and the feedback control
  \[u(x) = -B^{\star} DU(x) = \frac{2\alpha_1}{\lambda_1}\left<x,e_1\right>_He_1.\]
  This solves our issues with the trace of the second-derivative but introduces other problems into our analysis. Notice that in \eqref{eq:Q-eps-lower-bound}, we wish to maximize $Z(0,y) - Z(\hat{\tau}^{v,\e}, \hat{X}^{v,\e}(\hat{\tau}^{v,\e}))$. We want $Z(t,x)$ to be as large as possible for $|x|_H < L$, but as small as possible on the boundary $|x|_H = L$. If $x=0$, then $Z(0,0) = \frac{\alpha_1}{\lambda_1^2}L^2$. Unfortunately, depending on the location of $\hat{X}^{v,\e}(\hat{\tau}^{v,\e})$,  $Z(\hat{\tau}^{v,\e},\hat{X}^{v,\e}(\hat{\tau}^{v,\e}))$ may be as small as $0$ at $\pm Le_1$ or as big as $\frac{\alpha}{\lambda_1^2}L^2$ if the terminal point is orthogonal to $e_1$. In order for \eqref{eq:Q-eps-lower-bound} to provide us with a useful estimate, we need for $\hat{X}^{v,\e}(\hat{\tau}^{v,\e})$ to be near $\pm L e_1$. The results of the next two sections show that this is indeed the case.

\section{Calculus of variations in the $\e=0$ case}\label{S:CalculusOfVariationsLinearProblem}
\label{sec:calcofvar}
In this section we consider the deterministic controlled process
\[dX^v(t) = [AX^v(t) -Bu(X^v(t)) + Bv (t)]dt.\]
where $u(x) = 2\frac{\alpha_1}{\lambda_1}\left<x,e_1\right>_H e_1$ and $v(t)$ is a deterministic control. We will show in later sections that the controls used in the importance sampling schemes will converge to this $u$ uniformly as $\e \to 0$.

Let $\tau = \inf\{t>0: |X^v(t)|_H>L\}$.
Define the functional $I: C([0,T];H) \to \mathbb{R}$
\[I(X^v) = \int_0^\tau \left(\frac{1}{2}|v(s)|_H^2 -|u(X^v(s))|_H^2 \right)ds.\]
We are interested in the minimizers of
\[\inf\{I(X^v): v \in L^2([0,T];H)\}.\]
Our goal is to show that the minimizers of $I$ over all trajectories %for which $\left<X^v(0),e_k\right>_H=0 $ for $k \geq 2$, and $\tau\leq T$
have the property
that $X^v(\tau)$ is near $\pm L e_1$.

\begin{Lemma}
If $y\in C([0,T];H)$ is absolutely continuous such that $T = \inf\{t>0: |y(t)|_H=L\}$ and $I(y)<+\infty$, then $y$ is weakly differentiable and
\begin{align} \label{eq:I-explicit}
I(y) &= \frac{\alpha_1}{\lambda_1^2} \left<y(T), e_1\right>_H^2 - \frac{\alpha_1}{\lambda_1^2} \left<y(0),e_1\right>_H^2+ \int_0^T \left(\frac{1}{2}|B^{-1}\left(\dot{y}(s) - Ay(s)|_H^2\right)\right) ds.
\end{align}
In the above expression, $\dot{y} = \frac{\partial y}{\partial t}$ in the weak sense.
\end{Lemma}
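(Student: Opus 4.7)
The plan is to invert the controlled equation $\dot y = Ay - Bu(y) + Bv$, express $v$ explicitly in terms of $\dot y$ and $Ay$, substitute into the integrand of $I(y)$, and uncover a perfect total derivative hiding in the cross term produced by expanding the square.

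First, since $I(y)<+\infty$, there must exist $v\in L^2([0,T];H)$ for which $y=X^v$ in the mild sense, so the equation $\dot y = Ay - Bu(y) + Bv$ holds in the weak sense; this supplies the weak differentiability claim. Because the eigenvalues $\lambda_k$ are strictly positive, $B$ is injective, and we may solve
\[
v(s) = B^{-1}\bigl(\dot y(s) - Ay(s)\bigr) + u(y(s)),
\]
with $B^{-1}(\dot y - Ay)=v-u(y)\in L^2([0,T];H)$ by construction, since $u(y)$ is bounded whenever $|y|_H\leq L$.

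Substituting this into $\frac{1}{2}|v|_H^2 - |u(y)|_H^2$ and expanding the square yields
\[
\frac{1}{2}\bigl|B^{-1}(\dot y - Ay)\bigr|_H^2 + \bigl\langle B^{-1}(\dot y - Ay),\, u(y)\bigr\rangle_H - \frac{1}{2}|u(y)|_H^2.
\]
To treat the last two terms, set $\eta(t)=\left<y(t),e_1\right>_H$ and use that $A$ and $B$ are self-adjoint with $Ae_1=-\alpha_1 e_1$ and $Be_1=\lambda_1 e_1$. Since $u(y) = (2\alpha_1/\lambda_1)\eta\, e_1$ and $B^{-1}e_1=\lambda_1^{-1}e_1$, the cross term collapses to
\[
\bigl\langle B^{-1}(\dot y - Ay),\, u(y)\bigr\rangle_H = \frac{2\alpha_1}{\lambda_1^2}\,\eta\bigl(\dot\eta + \alpha_1\eta\bigr) = \frac{\alpha_1}{\lambda_1^2}\frac{d}{dt}\eta^2 + \frac{2\alpha_1^2}{\lambda_1^2}\eta^2,
\]
while $\frac{1}{2}|u(y)|_H^2 = (2\alpha_1^2/\lambda_1^2)\eta^2$, so the two quadratic-in-$\eta$ pieces cancel exactly and leave a single total derivative.

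Integrating from $0$ to $T$ delivers the claimed identity, with the boundary contribution $(\alpha_1/\lambda_1^2)\bigl(\left<y(T),e_1\right>_H^2 - \left<y(0),e_1\right>_H^2\bigr)$ coming from the total-derivative piece and the $L^2$ running cost $\frac{1}{2}|B^{-1}(\dot y - Ay)|_H^2$ inherited directly. The main technical obstacle is the opening move: upgrading rigorously from ``$y$ is absolutely continuous with $I(y)<\infty$'' to ``$\dot y$ exists weakly, $Ay(s)$ is defined for a.e. $s$, and $v=B^{-1}(\dot y - Ay)+u(y)$ lies in $L^2([0,T];H)$.'' After that, everything reduces to eigen-algebra in the $e_1$ direction together with the orthogonality of $\{e_k\}$ for the remaining modes.
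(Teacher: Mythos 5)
Your proof is correct and follows essentially the same route as the paper's: invert the controlled equation to express $v$ in terms of $\dot y$, $Ay$, and $u(y)$, expand $\tfrac12|v|_H^2-|u(y)|_H^2$, and show the cross terms and $u$-quadratic terms combine into the exact derivative $\tfrac{\alpha_1}{\lambda_1^2}\tfrac{d}{dt}\left<y,e_1\right>_H^2$. The only cosmetic difference is bookkeeping: the paper splits the cross term into the $\dot y$ and $-Ay$ pieces and cancels the latter against $-\tfrac12|u|_H^2$ separately, while you project once onto $e_1$ and observe the cancellation in a single step.
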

\begin{proof}
  Let $y(t)=X^v(t)$ and for any $k\geq 1$, let $y_k(t) = \left<y(t),e_k \right>_H$ and $v_k(t) = \left<v(t),e_k\right>_H$. Because $A$ and $Q$ are diagonalized by the $\{e_k\}$ basis, $y_k$ solves
  \[y_k(t) =  \int_0^t \big( -\alpha_k y_k(s)- \left<Bu(y(s)),e_k\right>_H+ \lambda_k  v_k(s)\big)ds. \]
  Each $y_k$ is differentiable. Consequently $y(t)$ is weakly differentiable in the sense that for any test function $h \in H$, $\left<y(t),h\right>_H$ is differentiable and
  \[\frac{d}{dt} \left<y(t),h\right>_H = \sum_{k=1}^\infty \dot{y}_k(t)\left<h,e_k\right>_H.\]
  Furthermore, $v_k(t) = \frac{1}{\lambda_k} \left(\dot{y}_k(t)  + \alpha_k y_k(t) + \left<B u(y(s)),e_k\right>_H\right)$ and
  \[v(t) =B^{-1}\left( \dot{y}(t) - Ay(t) + Bu(y(t))\right).\]
   Thus,
  \[I(y) = \int_0^T \left(\frac{1}{2}\left|B^{-1}\left(\dot{y}(s) - Ay(s) + Bu(y(s))\right)\right|_H^2 - |u(y(s))|_H^2 \right)ds.\]
  We assumed in the statement of the Lemma that $I(y)<+\infty$.
  Therefore, with some simplifications,
  \begin{align*}
    I(y) = \int_0^T \Big( &\frac{1}{2}\left|B^{-1}\left(\dot{y}(s) - Ay(s)\right)\right|_H^2 + \left<u(y(s)),B^{-1}\dot{y}(s)\right>_H \\
    &- \frac{1}{2} |u(y(s))|_H^2 + \left<-B^{-1}A y(s), u(y(s))\right>_H\Big)ds.
  \end{align*}
  The result follows because $\int_0^T \left< B^{-1}\dot{y}(t), u(y(t))\right>_H dt = \frac{\alpha_1}{\lambda_1^2}\left<y(T),e_1\right>_H^2-\frac{\alpha_1}{\lambda_1^2}\left<y(0),e_1\right>_H^2$ and because $\left<-B^{-1}A y(s), u(y(s))\right>_H = \frac{\alpha_1}{\lambda_1} \left< y(s), u(y(s))\right>_H$. Finally, because $u(x) = 2\frac{\alpha_1}{\lambda_1}\left<x,e_1\right>_H e_1$, we obtain
  \[-\frac{1}{2}|u(y(s))|_H^2 + \frac{\alpha_1}{\lambda_1}\left< y(s), u(y(s))\right>_H = 0,\]
  concluding the proof of the lemma.
\end{proof}

Now we find the minimizers of $I$. First, we fix $T >0$ and $z \in H$, and we find the minimizer of $I$ over all $y \in C([0,T];H)$ such that $y(T) =z$. Then we minimize over $z$ and $T$.

\begin{Lemma} \label{lem:I-min-k-geq-2}
  Fix $T>0$ and $z \in H$. If $y^{*} \in C([0,T];H)$ is such that $y^{*}(0)=0$, $y^{*}(T)=z$ and
  \[I(y^{*}) = \inf\{I(y): y(0) = 0, y(T)=z\}, \]
  then
  \[y^{*}_k(t): = \left<y^{*}(t), e_k\right> = \frac{z_k(e^{-\alpha_k(T-t)} - e^{-\alpha_k(t+T)})}{1 - e^{-2\alpha_k T}}\]
  and
  \[\frac{1}{2}\int_0^T \left<v(s),e_k\right>_H^2 ds = \frac{\alpha_kz_k^2}{\lambda_k^2\left(1-e^{-2\alpha_kT}\right)}.\]
\end{Lemma}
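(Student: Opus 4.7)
The plan is to use the explicit representation of $I(y)$ from the preceding lemma. Since $y(0)=0$ and $y(T)=z$ are fixed, the boundary term $\frac{\alpha_1}{\lambda_1^2}\langle y(T),e_1\rangle_H^2-\frac{\alpha_1}{\lambda_1^2}\langle y(0),e_1\rangle_H^2$ is a constant with respect to the trajectory, so minimizing $I$ is equivalent to minimizing
\[J(y)=\int_0^T\frac{1}{2}\bigl|B^{-1}(\dot y(s)-Ay(s))\bigr|_H^2\,ds=\sum_{k=1}^\infty\int_0^T\frac{1}{2\lambda_k^2}\bigl(\dot y_k(s)+\alpha_k y_k(s)\bigr)^2\,ds\]
subject to $y_k(0)=0,\,y_k(T)=z_k$ for every $k\geq 1$. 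Since the integrand is a sum of nonnegative quadratic terms each involving only $y_k$, and the boundary constraints decouple across modes, the variational problem factors into independent scalar calculus of variations problems, one for each $k$.

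For a fixed mode I would apply the Euler--Lagrange equation to $L_k(y_k,\dot y_k)=\frac{1}{2\lambda_k^2}(\dot y_k+\alpha_k y_k)^2$. Setting $\phi_k:=\dot y_k+\alpha_k y_k$ reduces Euler--Lagrange to $\dot\phi_k-\alpha_k\phi_k=0$, so $\phi_k(t)=c_k e^{\alpha_k t}$. Solving the linear ODE $\dot y_k+\alpha_k y_k=c_k e^{\alpha_k t}$ by the integrating factor $e^{\alpha_k t}$ with $y_k(0)=0$ gives $y_k(t)=\frac{c_k}{2\alpha_k}(e^{\alpha_k t}-e^{-\alpha_k t})$; the endpoint condition $y_k(T)=z_k$ fixes $c_k=2\alpha_k z_k/(e^{\alpha_k T}-e^{-\alpha_k T})$, and multiplying numerator and denominator by $e^{-\alpha_k T}$ produces exactly the formula stated for $y_k^*(t)$. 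Strict convexity of $L_k$ in $(y_k,\dot y_k)$ together with affine boundary data ensures this critical point is the unique minimizer.

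For the second claim I would use the relation $v=B^{-1}(\dot y-Ay+Bu(y))$ derived in the preceding lemma. Because $u(x)=2(\alpha_1/\lambda_1)\langle x,e_1\rangle_H e_1$ lies in the span of $e_1$, $\langle Bu(y),e_k\rangle_H=0$ for every $k\geq 2$; hence $\lambda_k v_k(t)=\dot y_k^*(t)+\alpha_k y_k^*(t)=c_k e^{\alpha_k t}$ in that range. Squaring and integrating, and then combining $\int_0^T e^{2\alpha_k t}\,dt=(e^{2\alpha_k T}-1)/(2\alpha_k)$ with the factorization $(e^{\alpha_k T}-e^{-\alpha_k T})^2=e^{-2\alpha_k T}(e^{2\alpha_k T}-1)^2$ collapses the $k$-th mode cost to $\alpha_k z_k^2/(\lambda_k^2(1-e^{-2\alpha_k T}))$, which is the claimed identity.

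The only genuinely delicate point is the justification of termwise minimization over an infinite Fourier expansion: one must verify that any admissible $y$ with $I(y)<\infty$ has each $y_k\in H^1(0,T)$ and that $\sum_k\int_0^T\frac{1}{2\lambda_k^2}(\dot y_k+\alpha_k y_k)^2\,ds$ converges, so that minimizing each summand actually minimizes the total. Once this bookkeeping is in place, the remainder is routine linear ODE solution and algebraic simplification.
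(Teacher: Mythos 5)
Your proof follows essentially the same route as the paper's: reduce to the Euler--Lagrange equation for the integral term in $I$, solve the resulting linear ODE mode by mode, and integrate the square of the optimal control. You fill in details the paper omits (the mode-by-mode decoupling, the substitution $\phi_k=\dot y_k+\alpha_k y_k$ turning the second-order E--L equation $\ddot y_k=\alpha_k^2 y_k$ into a first-order one, and the explicit algebra), and you are in fact more careful than the paper's proof on one point: you correctly restrict the identity $\lambda_k v_k=\dot y_k+\alpha_k y_k$ to $k\geq 2$, since for $k=1$ the term $\langle Bu(y),e_1\rangle_H=2\alpha_1 y_1$ contributes; what actually equals $\alpha_k z_k^2/(\lambda_k^2(1-e^{-2\alpha_k T}))$ for all $k$, and what feeds into \eqref{eq:explicit-I}, is $\tfrac12\int_0^T\lambda_k^{-2}(\dot y_k^*+\alpha_k y_k^*)^2\,ds$, as your argument makes clear.
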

\begin{proof}
  By calculus of variations, the Euler-Lagrange equation for $I(y)$ in \eqref{eq:I-explicit} is
  \[\ddot{y}(t) = A^2 y(t)\]
  and the solution is given above. Then
  \[\left<v(t),e_k\right> = \frac{1}{\lambda_k}\left(\dot{y}(t) +\alpha_k y(t)\right) = \frac{2 z_k \alpha_k e^{-\alpha_k(T-t)}}{\lambda_k\left(1 - e^{-2\alpha_kT}\right)}.\]
  The lemma then follows by integrating the square of this.
\end{proof}

Let $y^*$ be as in Lemma \ref{lem:I-min-k-geq-2}. Then by \eqref{eq:I-explicit},
\begin{equation} \label{eq:explicit-I}
    I(y^*) = \frac{\alpha_1}{\lambda_1^2} z_1^2 + \sum_{k=1}^\infty \frac{\alpha_k z_k^2}{\lambda_k^2\left(1 - e^{-2\alpha_k T}\right)}
  \end{equation}
  where $z_k = \left<z, e_k\right>_H$.

In this way, for any fixed $T$ equation \eqref{eq:explicit-I} is a quadratic form in $z$. We can show that the minimizer $y^*$ points only in the $e_1$ direction by studying the eigenvalues.
Let
\begin{align}
  \label{eq:varphi1}&\varphi_1(T) = \frac{\alpha_1 }{\lambda_1^2\left(1- e^{-2\alpha_1T}\right)} +\frac{\alpha_1}{\lambda_1^2},  \\
  \label{eq:varphi2}&\varphi_k(T) = \frac{\alpha_k}{\lambda_k^2\left(1 - e^{-2\alpha_kT}\right)}, \hspace{1cm} k \geq 2
\end{align}

If $\varphi_1(T) < \varphi_k(T)$ for all $k \geq 2$, then the minimum occurs at $z = \pm L e_1$ and by Lemma \ref{lem:I-min-k-geq-2},
\[\left<y^\star(t), e_k \right>_H = 0 \text{ for all } k \geq 2.\]

Now we describe the conditions on the eigenvalues $\lambda_k$ and $\alpha_k$, and the time horizon $T$ that guarantee that the deterministic control problem exits $D = \{x \in H: |x|_H \leq L\}$  at $\pm L e_1$. If $\lambda_1 \geq \lambda_k$ and $3\alpha_1<\alpha_k$ for all $k \in \mathbb{N}$, then we can guarantee that for all time horizons $T>0$, the minimal control problem exits at $\pm L e_1$. If at least one of the $\lambda_k> \lambda_1$, then for small $T$, the minimal control system will not exit near $\pm L e_k$. If however, $2\frac{\alpha_1}{\lambda_1^{2}} < \frac{\alpha_k}{\lambda_k^2}$ we can always find $T_0>0$, such that for all $T>T_0$, the minimal control problem exits at $\pm L e_1$.

\subsection{Minimal control problem if $\lambda_1 \geq \lambda_k$ and $3\alpha_1< \alpha_k$ for all $k \geq 2$}
\begin{assumption} \label{assum:lambda_k-less-than}
  Assume that for all $k \geq 2$, $\lambda_1\geq \lambda_k$ and $3\alpha_1< \alpha_k$.
\end{assumption}
\begin{Theorem} \label{thm:min-traj-e1}
  Assume that Assumption \ref{assum:lambda_k-less-than} holds. Then for any fixed $T>0$, there exists a minimizer $y^{*} \in C([0,T];H)$ satisfying
  \[I(y^{*}) = \inf\{I(y): y \in C([0,T];H), y(0)=0, \ \exists \ 0\leq t\leq T, |y(t)|_H=L\}.\]
  The minimizer $y^*$ has the property that $|y^*(t)|_H< L$ for all $t<T$ and $|y^*(T)|_H = L$. Furthermore, $y^{*}$ only points in the $e_1$ direction. That is, $\left<y^{*}(t),e_k\right>_H = 0$ for $k \geq 2$.
\end{Theorem}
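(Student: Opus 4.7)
The plan is to reduce the infinite-dimensional variational problem to a one-parameter scalar optimization using the explicit cost formula (3.6), and then to verify a uniform-in-time inequality on the coefficients $\varphi_k$ under Assumption 3.2. I would first observe that any admissible trajectory first reaches the sphere $\{|x|_H = L\}$ at some time $\tau \in (0, T]$, and since $I$ only integrates up to $\tau$, Lemma \ref{lem:I-min-k-geq-2} applied with horizon $\tau$ and endpoint $z = y(\tau)$ shows that the minimal cost among admissible trajectories ending at $z$ at time $\tau$ equals $\sum_k \varphi_k(\tau)\, z_k^2$. Minimizing this quadratic form over $\sum_k z_k^2 = L^2$ yields $L^2 \min_k \varphi_k(\tau)$, attained at $z = \pm L e_{k^\star}$ where $k^\star$ minimizes $\varphi_k(\tau)$ in $k$. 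So the problem collapses to showing $k^\star = 1$ for every $\tau > 0$ and then minimizing $\varphi_1(\tau)$ in $\tau$.

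The technical core is the inequality $\varphi_1(\tau) < \varphi_k(\tau)$ for every $\tau > 0$ and $k \geq 2$. Using $\lambda_1 \geq \lambda_k$ to dispose of the $\lambda$-factors, the inequality reduces to
\begin{equation*}
\alpha_1 \frac{2 - e^{-2\alpha_1 \tau}}{1 - e^{-2\alpha_1 \tau}} < \frac{\alpha_k}{1 - e^{-2\alpha_k \tau}}.
\end{equation*}
Setting $u = e^{-\alpha_1 \tau} \in (0,1)$ and invoking $\alpha_k > 3\alpha_1$, so that $e^{-2\alpha_k \tau} < u^6$, the right-hand side is bounded below by $3\alpha_1/(1 - u^6)$. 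After factoring $1 - u^6 = (1 - u^2)(1 + u^2 + u^4)$ and cancelling, it suffices to prove $(2 - u^2)(1 + u^2 + u^4) < 3$, which simplifies to $h(u) := u^2 + u^4 - u^6 < 1$ on $[0,1)$. This follows from the factorization $h'(u) = 2u(1 + 3u^2)(1 - u^2) > 0$ on $(0,1)$ together with $h(1) = 1$. This step is precisely where the factor $3$ in the spectral-gap assumption $3\alpha_1 < \alpha_k$ is used.

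Finally, since $\varphi_1(\tau)$ is strictly decreasing in $\tau$, the infimum over $\tau \in (0, T]$ is attained at $\tau = T$. The minimizer is therefore the explicit extremal from Lemma \ref{lem:I-min-k-geq-2} with horizon $T$ and $z = \pm L e_1$, which works out to $y^\star(t) = \pm L \sinh(\alpha_1 t)/\sinh(\alpha_1 T)\, e_1$; this is strictly below $L$ in $H$-norm for $t < T$, equals $L$ at $t = T$, and has $\langle y^\star(t), e_k \rangle_H = 0$ for all $k \geq 2$. The main obstacle is proving the uniform-in-$\tau$ scalar inequality for $\varphi_1$ versus $\varphi_k$; once that is in hand the rest is bookkeeping against Lemma \ref{lem:I-min-k-geq-2}.
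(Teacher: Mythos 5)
Your reduction to a one-parameter scalar inequality is sound, and the technical core is a genuinely different route from the paper's. The paper sets $\psi_k(T;1,1)=\varphi_1(T)-\varphi_k(T)$ and shows it is negative for all $T>0$ by checking $\lim_{T\to 0}\psi_k(T;1,1)=(3\alpha_1-\alpha_k)/2<0$ and then that $\psi_k'(T;1,1)<0$ for all $T$, the latter via the fact that $x\mapsto x^2e^{-2Tx}/(1-e^{-2Tx})^2$ is decreasing. You instead substitute $u=e^{-\alpha_1\tau}$, use the gap $\alpha_k>3\alpha_1$ to replace $\alpha_k$ by $3\alpha_1$, factor $1-u^6=(1-u^2)(1+u^2+u^4)$, and land on the polynomial inequality $u^2+u^4-u^6<1$ on $[0,1)$, which you verify directly from $h'(u)=2u(1-u^2)(1+3u^2)>0$ and $h(1)=1$. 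Both proofs isolate where the constant $3$ enters; yours has the merit of ending in a finite algebraic check with no differentiation in $T$.

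One step needs tightening. The assertion that $\frac{\alpha_k}{1-e^{-2\alpha_k\tau}}\geq\frac{3\alpha_1}{1-u^6}$ does not follow from the two facts you cite, namely $\alpha_k>3\alpha_1$ and $e^{-2\alpha_k\tau}<u^6$: the first enlarges the numerator, but the second gives $1-e^{-2\alpha_k\tau}>1-u^6$ and so enlarges the denominator, and these pull in opposite directions. What actually closes the gap is that, for fixed $\tau>0$, the map $\alpha\mapsto\alpha/(1-e^{-2\tau\alpha})$ is increasing, since its derivative has the sign of $1-(1+s)e^{-s}>0$ with $s=2\tau\alpha$; with this, $\alpha_k>3\alpha_1$ and $e^{-6\alpha_1\tau}=u^6$ give the bound immediately. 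This monotonicity observation is the counterpart of the paper's decreasing-map claim, so both proofs ultimately rest on a small calculus fact, just not the same one. Your closed form $y^\star(t)=\pm L\sinh(\alpha_1 t)/\sinh(\alpha_1 T)\,e_1$ and the strict decrease of $\varphi_1$ in $\tau$ (which forces $\tau=T$) are both correct and agree with Lemma \ref{lem:I-min-k-geq-2} and the paper's remark that $I(y^\star)$ decreases as $T$ increases.
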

\begin{proof}
  We want to show that $\varphi_1(T)< \varphi_k(T)$ for all $T>0$ and $k\geq2$. Let us define
  \begin{align*}
  \psi_{k}(T;\lambda_{1},\lambda_{k})&=\varphi_{1}(T)-\varphi_{k}(T)=\frac{\alpha_1 }{\lambda_1^2\left(1- e^{-2\alpha_1T}\right)} +\frac{\alpha_1}{\lambda_1^2}-\frac{\alpha_k}{\lambda_k^2\left(1 - e^{-2\alpha_kT}\right)}\nonumber\\
  &=\frac{1}{\lambda_{k}^{2}}\left[\alpha_1\frac{\lambda_k^{2} }{\lambda_1^2}\frac{2- e^{-2\alpha_1 T}}{1- e^{-2\alpha_1 T}}-\alpha_k\frac{1}{1 - e^{-2\alpha_k T}}\right]\nonumber
  \end{align*}

  Since $\lambda_1 \geq \lambda_k$ for all $k \geq 2$, we get that $\frac{\lambda_k^{2} }{\lambda_1^2}\leq 1$ and a sufficient condition for $\psi_{k}(T;\lambda_{1},\lambda_{k})<0$ is
  \[\psi_{k}(T;1,1) = \frac{\alpha_1}{(1-e^{-2\alpha_1 T})} + \alpha_1 - \frac{\alpha_k}{(1-e^{-2\alpha_k T})} <0.\]

  We calculate that the previous display holds for small $T$ because
  \begin{align*}
   &\lim_{T\to 0}\psi_{k}(T;1,1)  = \frac{3\alpha_1}{2} - \frac{\alpha_k}{2}.
  \end{align*}
  This is negative as long as $\alpha_k > 3\alpha_1$.
  We can compare the derivatives in $T$ to show that
   \[\psi^{'}_{k}(T;1,1) = \frac{-2\alpha_1^2 e^{-2\alpha_1 T}}{(1-e^{-2\alpha_1 T})^2} + \frac{2\alpha_k^{2} e^{-2\alpha_k T}}{(1-e^{-2\alpha_k T})^2}\]

   For any fixed $T>0$, the function $x \mapsto \frac{x^2 e^{-2Tx}}{(1-e^{-2Tx})^2}$ is decreasing. Therefore, $\psi^{'}_{k}(T;1,1)<0$ for all $T$ and because $\lim_{T \to 0} \psi_{k}(T;1,1) <0$, this implies that $\psi_{k}(T;1,1)<0$ for all $T>0$. The minimal eigenvalue is $\varphi_1(T)$.

  Then if we take the infimum over all $|z|_H=L$ in \eqref{eq:explicit-I}, the minimum occurs when $z_1 = \pm L e_1$, and $z_k=0$ for all $k\geq 2$. This suggests that the minimizing control only pushes in the $e_1$ direction. Furthermore, $I(y^\star)$ decreases as $T$ increases.
\end{proof}

\begin{Remark}
  In the case where $A=\Delta$ is the Laplace operator endowed with Dirichlet boundary conditions on the spatial domain [0,1] and $B=I$ is the identity, $\alpha_k = \pi^2 k^2$. Consequently, $4 \alpha_1 = \alpha_2 < \alpha_3< ...$. So the Laplace operator satisfies the conditions of the previous theorem.This gap does not exist for heat equations in dimension $d\geq 2$. However, in the next section we describe a relationship between the $\alpha_k$ and $\lambda_k$ that imply minimizers exit in the $e_1$ direction even for the multidimensional heat equation.  %This factor of $3$ may  not be tight, but as the simulation studies of Section \ref{S:Numerics} suggest some spectral gap is necessary.
\end{Remark}

\subsection{Minimizing control problem for the general case}
\begin{assumption} \label{assum:lambda-general}
  Assume that for all $k \in \mathbb{N}$, $2\frac{\alpha_1}{\lambda_1^{2}} < \frac{\alpha_k}{\lambda_k^2}$.
\end{assumption}

\begin{Remark}
  Notice that Assumption \ref{assum:lambda_k-less-than} implies Assumption \ref{assum:lambda-general}.
\end{Remark}

In the case where there exists $k \in \mathbb{N}$ such that $\lambda_k > \lambda_1$, it is impossible for $e_1$ to be the minimal exit direction for all $T>0$. Intuitively, this is a consequence of the fact that over short time periods, the noise is much more powerful than the dissipation. Over longer time periods, the dissipation has an important effect. Let $\varphi_1(T)$ and $\varphi_k(T)$ be given as in \eqref{eq:varphi1}-\eqref{eq:varphi2}.

\begin{Lemma}\label{L:Case2_behaviorAtInfty}
  If there exists a $k \in \mathbb{N}$ for which $\lambda_k>\lambda_1$, then for $T$ close to $0$, $\varphi_k(T)< \varphi_1(T)$ and the minimal exit direction is not $e_1$.
\end{Lemma}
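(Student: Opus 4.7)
The plan is to analyze the behavior of $\varphi_1(T)$ and $\varphi_k(T)$ as $T\to 0^+$ by Taylor-expanding $1-e^{-2\alpha T} = 2\alpha T - 2\alpha^{2}T^{2} + O(T^{3})$. Applying this to \eqref{eq:varphi1} and \eqref{eq:varphi2} I obtain
\begin{equation*}
\varphi_1(T) = \frac{1}{2\lambda_1^{2}\,T} + \frac{\alpha_1}{\lambda_1^{2}} + \frac{\alpha_1}{\lambda_1^{2}} + O(T), \qquad \varphi_k(T) = \frac{1}{2\lambda_k^{2}\,T} + \frac{\alpha_k}{\lambda_k^{2}}\cdot\frac{1}{2} + O(T),
\end{equation*}
so the leading singular term of $\varphi_j(T)$ near $0$ is $\frac{1}{2\lambda_j^{2}T}$ (the additive $\alpha_1/\lambda_1^{2}$ in $\varphi_1$ only contributes an $O(1)$ correction).

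The key step is then to form the difference and isolate the dominant term:
\begin{equation*}
\varphi_1(T) - \varphi_k(T) = \frac{1}{2T}\left(\frac{1}{\lambda_1^{2}} - \frac{1}{\lambda_k^{2}}\right) + O(1) \qquad \text{as } T \to 0^{+}.
\end{equation*}
Under the hypothesis $\lambda_k > \lambda_1$, the coefficient $\frac{1}{\lambda_1^{2}} - \frac{1}{\lambda_k^{2}}$ is strictly positive, so the right-hand side tends to $+\infty$ as $T\downarrow 0$. In particular there exists $T_{0}>0$ such that $\varphi_1(T) > \varphi_k(T)$ for all $0<T<T_{0}$.

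To conclude that the minimal exit direction is not $e_1$, I invoke the explicit quadratic form \eqref{eq:explicit-I}, namely $I(y^{*}) = \sum_{j\geq 1}\varphi_j(T)z_j^{2}$ when $y^{*}(T)=z$. Minimizing this over the sphere $|z|_H = L$ is a standard eigenvalue problem whose minimum is achieved along the coordinate direction corresponding to the smallest $\varphi_j(T)$. Since for $T<T_0$ we have $\varphi_k(T) < \varphi_1(T)$, the $e_1$ direction is not a minimizer; indeed the minimizing trajectory terminates at $\pm L e_{j_*}$ for some $j_*$ with $\varphi_{j_*}(T)\leq \varphi_k(T)<\varphi_1(T)$, and by Lemma \ref{lem:I-min-k-geq-2} it has vanishing $e_1$-component. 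There is no real obstacle here; the only thing to be careful about is keeping track of the additive $\alpha_1/\lambda_1^{2}$ term in $\varphi_1$ and confirming it is subleading with respect to $1/T$, which the expansion above handles directly.
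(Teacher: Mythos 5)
Your proof is correct and fills in, via Taylor expansion, exactly the ``straightforward calculation'' the paper invokes in its one-line proof (namely $\lim_{T\to 0}\psi_k(T;\lambda_1,\lambda_k)=+\infty$), and your final step of reading off the minimizing direction from the diagonal quadratic form \eqref{eq:explicit-I} is the same appeal to Lemma~\ref{lem:I-min-k-geq-2} that the paper makes. One small arithmetic slip: the $O(1)$ constant in your expansion of $\varphi_1$ should be $\tfrac{\alpha_1}{2\lambda_1^2}+\tfrac{\alpha_1}{\lambda_1^2}=\tfrac{3\alpha_1}{2\lambda_1^2}$ rather than $\tfrac{2\alpha_1}{\lambda_1^2}$, but since this term is subleading relative to the $1/(2\lambda_1^2 T)$ singularity it does not affect the conclusion.
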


\begin{proof}
  Recall the definition $\psi_{k}(T;\lambda_{1},\lambda_{k})=\varphi_{1}(T)-\varphi_{k}(T)$. Straightforward calculations show that in this case
  \[\lim_{T \to 0} \psi_{k}(T;\lambda_{1},\lambda_{k}) = +\infty.\]
  This means that for small $T$, $\varphi_k(T) < \varphi_1(T)$, implying that in this case and for small $T$ the minimal exit direction is not $e_1$.
\end{proof}

If $\lambda_1$ is not the maximum eigenvalue of $Q$, then over short time periods the minimal control problem will not exit in the $e_1$ direction. Over longer time periods, Assumption \ref{assum:lambda-general} is a sufficient condition for a spectral gap to exist.

\begin{Theorem} \label{thm:min-traj-long-time}
  Assume that Assumption \ref{assum:lambda-general} holds. Then there exists a time $T_0>0$
  such that for any $T>T_0$, there exists a minimizer $y^{*} \in C([0,T];H)$ satisfying
  \[I(y^{*}) = \inf\{I(y): y \in C([0,T];H), y(0)=0, \ \exists \ 0\leq t\leq T, |y(t)|_H=L\}.\]
  The minimizer $y^*$ has the property that $|y^*(t)|_H< L$ for all $t<T$ and $|y^*(T)|_H = L$. Furthermore, $y^{*}$ only points in the $e_1$ direction. That is, $\left<y^{*}(t),e_k\right>_H = 0$ for $k \geq 2$.
\end{Theorem}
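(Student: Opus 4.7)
The plan is to follow closely the strategy of Theorem \ref{thm:min-traj-e1}. By \eqref{eq:explicit-I}, minimizing $I(y^*)$ among trajectories exiting at an endpoint $z$ with $|z|_H = L$ amounts to diagonalizing the quadratic form in $z$ with eigenvalues $\varphi_k(T)$, so once we show that $\varphi_1(T) < \varphi_k(T)$ simultaneously for every $k \geq 2$, the minimizer's endpoint is forced to $\pm L e_1$. Since each term in \eqref{eq:explicit-I} is monotonically decreasing in $T$, replacing an exit time $\tau < T$ by $T$ itself only lowers the cost, and Lemma \ref{lem:I-min-k-geq-2} then gives $\left<y^*(t),e_k\right>_H = 0$ for $k \geq 2$, with $|y^*(t)|_H < L$ on $[0,T)$. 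Thus the theorem reduces entirely to producing a uniform $T_0$ beyond which the spectral gap $\varphi_1(T) < \varphi_k(T)$ holds for every $k \geq 2$.

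To produce $T_0$, set $c_k = \alpha_k/\lambda_k^2$. Because $1 - e^{-2\alpha_k T} \leq 1$, the elementary lower bound $\varphi_k(T) \geq c_k$ holds for every $k \geq 2$ and every $T > 0$. On the other hand, directly from \eqref{eq:varphi1}, $\varphi_1(T)$ decreases monotonically to $2c_1$ as $T \to \infty$. Consequently, if we can establish
\[\delta := \inf_{k \geq 2}\bigl(c_k - 2c_1\bigr) > 0,\]
then choosing $T_0$ large enough so that $\varphi_1(T_0) < 2c_1 + \delta$ yields, for all $T \geq T_0$ and all $k \geq 2$,
\[\varphi_k(T) \;\geq\; c_k \;\geq\; 2c_1 + \delta \;>\; \varphi_1(T),\]
which is exactly the spectral-gap condition needed above.

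The delicate step is to show $\delta > 0$; that is, to rule out $c_k \downarrow 2c_1$ along some subsequence, which would destroy uniformity in $k$. Assumption \ref{assum:lambda-general} alone only yields the pointwise strict inequality $c_k > 2c_1$. The uniformity is supplied by Assumption \ref{assum:eigens}: the summability $\sum_k \lambda_k^2/\alpha_k < \infty$ forces $\lambda_k^2/\alpha_k \to 0$, hence $c_k \to \infty$. Therefore $c_k > 2c_1 + 1$ for every $k$ beyond some $K$, while on the finite set $\{2,\dots,K\}$ the minimum of $c_k - 2c_1$ is strictly positive by Assumption \ref{assum:lambda-general}; taking the smaller of the two bounds gives $\delta > 0$. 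This is the main obstacle of the proof, and resolving it in this way is precisely what distinguishes Theorem \ref{thm:min-traj-long-time} from Theorem \ref{thm:min-traj-e1}, which had a spectral gap at every $T$ and therefore did not need to tap into Assumption \ref{assum:eigens} in this way.
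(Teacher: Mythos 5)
Your argument is correct and reaches the same conclusion as the paper, but the mechanics of the uniformity step differ, and the comparison is worth noting. The paper defines, for each mode $k$, a crossing time $T_k = \sup\{T>0: \varphi_1(T)-\varphi_k(T)\geq 0\}$, sets $T_0 = \sup_k T_k$, and then shows $T_0<\infty$ by contradiction: if a subsequence $T_{n_k}$ stayed bounded away from zero, the defining equation $\varphi_1(T_{n_k})=\varphi_{n_k}(T_{n_k})$ would have a bounded left side and an unbounded right side since $\alpha_k/\lambda_k^2\to\infty$ by Assumption \ref{assum:eigens}. You instead bypass the $T_k$'s entirely: you use the $T$-uniform lower bound $\varphi_k(T)\geq c_k:=\alpha_k/\lambda_k^2$, observe $\varphi_1(T)\downarrow 2c_1$, and establish $\delta=\inf_{k\geq 2}(c_k-2c_1)>0$ directly — dispatching the tail via $c_k\to\infty$ (same use of Assumption \ref{assum:eigens}) and the remaining finitely many indices via the strict inequality in Assumption \ref{assum:lambda-general}. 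The key input is identical, but your version avoids the contradiction and the per-$k$ crossing times, and it produces $T_0$ constructively as the solution of $\varphi_1(T_0)=2c_1+\delta$, which is a mild gain in transparency. Your reduction of the theorem to the spectral-gap inequality (via monotonicity of each $\varphi_k$ in $T$, forcing the optimal exit time to be $T$, and Lemma \ref{lem:I-min-k-geq-2}) matches the reduction implicit in Theorem \ref{thm:min-traj-e1}, and your closing remark about why Theorem \ref{thm:min-traj-long-time} needs Assumption \ref{assum:eigens} whereas Theorem \ref{thm:min-traj-e1} did not is accurate.
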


\begin{proof}
  It is sufficient to prove that there exists $T_0\geq 0$
  such that for all $T>T_0$ and all $k \in \mathbb{N}$, $\varphi_1(T) < \varphi_k(T)$.
  For any fixed $k$,
  \[\lim_{T \to +\infty} \left(\varphi_1(T) - \varphi_k(T)\right) = 2\frac{\alpha_1}{\lambda_1^2}- \frac{\alpha_k}{\lambda_k^2}.\]
  By assumption this is negative. Therefore, there exists a finite $T_k = \sup\{T>0: \varphi_1(T) - \varphi_k(T) \geq 0\}<\infty$. If $\varphi_1(T) - \varphi_k(T) \leq 0$ for all $T>0$, we define $T_k = 0$.

  A couple of useful remarks are in order here. First, notice that if $\lambda_k \leq \lambda_1$ and $3\alpha_1< \alpha_k$, then $T_k=0$ by Theorem \ref{thm:min-traj-e1}. Second, if $k$ is such that $\lambda_k > \lambda_1$ then simple algebra shows that  the function $T\mapsto \varphi_1(T) - \varphi_k(T)$ is decreasing, which combined with Assumption \ref{assum:lambda-general} and Lemma \ref{L:Case2_behaviorAtInfty}, imply that the specific $T_{k}$ is actually the unique solution to $\varphi_1(T) - \varphi_k(T)=0$.

  We define $T_0 = \sup_{k} T_k$ and prove the theorem by showing that $T_0$ is finite. We actually will prove something stronger, namely that $T_k \to 0$.

  By definition, for any $k \in \mathbb{N}$ such that $T_k>0$,
  \[\frac{\alpha_1}{\lambda_1^2( 1- e^{-2\alpha_1 T_{k}})} + \frac{\alpha_1}{\lambda_1^2} = \frac{\alpha_{k}}{\lambda_{k}^2(1-e^{-2\alpha_{k} T_{k}})}\]
  By Assumption \ref{assum:eigens}, $\frac{\alpha_k}{\lambda_k^2} \to +\infty$.
  If there existed a subsequence satisfying $T_{n_k} > \delta>0$, then the left hand side of the above equation would be bounded but the right hand side would converge to infinity. This would be a contradiction and consequently $T_k \to 0$.

  Consequently, there is a $T_0$ such that for all $T>T_0$, the spectral gap exists and the minimal control problem on the time horizon $T$ exits $\{x \in H: |x|_H \leq L\}$ at $\pm L e_1$.
\end{proof}

\begin{Remark}\label{R:MultiDcase}
Numerical evaluations of $\psi_{k}(T;\lambda_{1},\lambda_{k})$ have shown that in the case $\lambda_{k}-\lambda_{1}<-\eta<0$ for an appropriately large $\eta>0$ such that Assumption \ref{assum:lambda-general} holds one actually has that $T_{0}=0$. This implies that if the gap $\lambda_{k}-\lambda_{1}$ is negative and large enough then the condition $3\alpha_{1}<\alpha_{k}$ is not necessary in order to guarantee that $\varphi_1(T) - \varphi_k(T) < 0$ for all $T>0$, implying that  $y^{*}$ only points in the $e_1$ direction. However, we had been unable to find exactly how large the gap $\lambda_{k}-\lambda_{1}$ should be relative to the gap of $\alpha_{1}-\alpha_{k}$ in order for $T_{0}=0$ to be true. Despite that, Theorem \ref{thm:min-traj-long-time} does cover the case of a multidimensional heat equation as long as Assumption \ref{assum:lambda-general} holds.
\end{Remark}

\section{Weak compactness of minimizing trajectories}\label{S:WeakCompactness}
Let $u^\e$ be a sequence of feedback controls that are bounded by and converge uniformly on bounded subsets of $H$ to $u(x) = \frac{2\alpha_1}{\lambda_1}\left<x,e_1\right>_H$ given in the previous section. In practice $u^\e(x) = -B^{\star} DU^{\delta}(x)$ with $\delta=\delta(\e)$ for both of the two exponential convolution schemes outlined in the next section.
\begin{Theorem} \label{thm:conv-of-endpoint}
  Assume that either Assumption \ref{assum:lambda_k-less-than} or Assumption \ref{assum:lambda-general} holds. Let $T_0=0$ if Assumption \ref{assum:lambda_k-less-than} holds or let $T_0$ be as in Theorem \ref{thm:min-traj-long-time} if Assumption \ref{assum:lambda-general} holds. Fix a time horizon $T>T_0$ and let $\mathcal{Q}^{\e}(0,0,u^\e)$ be the variance of the importance sampling estimator \eqref{eq:variance-est}. Let $v^\e \in \mathcal{A}$ be approximate minimizers to the variational problem in the sense that
  \begin{align} \label{eq:ve-assump}
    -\e \log (\mathcal{Q}^{\e}(0,0,u^\e))
    &= \inf_{v \in \mathcal{A}, \hat{\tau}^{v,\e}\leq T} \E \left[\int_0^{\hat{\tau}^{v,\e}}\left( \frac{1}{2}|v(s)|_H^2 -|u^\e(\hat{X}^{v,\e}(s))|_H^2 \right)ds \right]\nonumber\\
    &\geq \E \left[\int_0^{\hat{\tau}^{v^\e,\e}}\left( \frac{1}{2}|v^\e(s)|_H^2 -|u^\e(\hat{X}^{v^\e,\e}(s))|_H^2 \right)ds \right] -\e^{2}
  \end{align}
  where
  \[\hat{\tau}^{v,\e} = \inf \{t>0: |\hat{X}^{v,\e}|_H \leq L\}.\]
   Then as $\e \to 0$,
   \begin{equation} \label{eq:conv-of-endpoint}
     \E\left<\hat{X}^{v^\e,\e}(\hat{\tau}^{v^\e,\e}),e_1\right>_H^2 \to L^2.
   \end{equation}
   In the above equations
   \[d\hat{X}^{v^\e,\e}(t) = \left[A\hat{X}^{v^\e,\e}(t) - Bu(\hat{X}^{v^\e,\e})+Bv ^\e(t)\right]dt + \sqrt{\e}Bdw(t).\]
\end{Theorem}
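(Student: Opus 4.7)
The plan is to apply the weak-convergence Laplace-principle methodology of \cite{bdm-2008} to the variational representation \eqref{eq:var-principle}, combined with the deterministic analysis of Section \ref{S:CalculusOfVariationsLinearProblem}. In brief: (a) derive an $\e$-uniform bound on the cost functional $J^\e(v^\e) := \E \int_0^{\hat{\tau}^{v^\e,\e}}\bigl(\tfrac{1}{2}|v^\e(s)|_H^2 - |u^\e(\hat{X}^{v^\e,\e}(s))|_H^2\bigr)\, ds$ via a carefully chosen test control; (b) extract a weakly convergent subsequence $v^\e \rightharpoonup v^0$; (c) identify the limit trajectory $y^{v^0}$ as a minimizer of the deterministic variational problem from Section \ref{S:CalculusOfVariationsLinearProblem}; and (d) invoke Theorems \ref{thm:min-traj-e1} and \ref{thm:min-traj-long-time} to pin the limiting exit point to $\pm L e_1$.

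For step (a) I would use as test control the deterministic minimizer from Lemma \ref{lem:I-min-k-geq-2} with $z = L e_1$ on horizon $T$, perturbed slightly near time $T$ to ensure $\hat{\tau}^{\bar v,\e} \leq T$ almost surely. A continuous-dependence estimate in $\e$ gives $J^\e(\bar v) \to I(y^*) = \varphi_1(T) L^2$, so by \eqref{eq:ve-assump}, $\limsup_{\e} J^\e(v^\e) \leq \varphi_1(T) L^2$. Because $|u^\e(x)|_H$ is uniformly bounded on $\{|x|_H \leq L\}$, this yields the $L^2$-bound $\sup_\e \E \int_0^{\hat{\tau}^{v^\e,\e}} |v^\e(s)|_H^2 \, ds < \infty$. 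Extending $v^\e$ by zero past $\hat{\tau}^{v^\e,\e}$ and invoking Banach--Alaoglu (with a Skorokhod representation to realize the weak limit on a common probability space) yields a subsequence along which $v^\e \rightharpoonup v^0$ weakly in $L^2(\Omega \times [0,T]; H)$.

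Next, the mild-solution representation of $\hat{X}^{v^\e,\e}$ together with the affine drift structure and $\sqrt{\e} \to 0$ (plus standard estimates for the stochastic convolution $\int_0^\cdot e^{(\cdot - s)A}B\,dw(s)$) delivers strong convergence $\hat{X}^{v^\e,\e} \to y^{v^0}$ in $C([0,T];H)$, where $y^{v^0}$ solves $\dot{y} = Ay - Bu(y) + Bv^0$, and hence also $\hat{\tau}^{v^\e,\e} \to \tau^0 := \inf\{t>0:|y^{v^0}(t)|_H = L\}$. Lower semicontinuity of $v \mapsto \tfrac{1}{2}\int |v|_H^2$ under weak convergence together with the uniform convergence $u^\e \to u$ on bounded sets then gives $\E I(y^{v^0}) \leq \liminf_\e J^\e(v^\e) \leq \varphi_1(T) L^2$. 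Section \ref{S:CalculusOfVariationsLinearProblem} supplies the matching lower bound $I(y) \geq \varphi_1(T) L^2$ for every admissible deterministic trajectory, with equality forcing the exit point to be $\pm L e_1$ (by the strict inequality $\varphi_1(T) < \varphi_k(T)$ for $k \geq 2$). Hence $\langle y^{v^0}(\tau^0), e_1\rangle_H^2 = L^2$ almost surely, and the universal upper bound $\langle \hat{X}^{v^\e,\e}(\hat{\tau}^{v^\e,\e}), e_1\rangle_H^2 \leq L^2$ combined with Fatou's lemma yields \eqref{eq:conv-of-endpoint}.

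The main obstacle will be the continuity of the random exit time under infinite-dimensional weak convergence: uniform path convergence does not automatically imply convergence of hitting times of the sphere $\{|x|_H = L\}$ unless the limiting trajectory crosses transversally. I would handle this by showing that along the optimal limiting path the effective drift $B(v^0 - u(y^{v^0}))$ pushes the system strictly outward in the $e_1$ direction at $\tau^0$, guaranteeing transversality. A secondary delicacy is that $v^0$ may a priori be random; this is resolved because uniqueness of the deterministic minimizer (up to the $\pm$ sign ambiguity, which is washed out by the squaring in \eqref{eq:conv-of-endpoint}) forces the limit points to lie on a two-point set, making the Fatou/lower-semicontinuity step consistent across the full-probability event where the weak limit is realized.
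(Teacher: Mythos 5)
Your proposal follows the same overall architecture as the paper's proof: obtain an $\e$-uniform bound on the cost, extract a convergent subsequence of controls, pass to the limit and identify it with a minimizer of the deterministic problem of Section \ref{S:CalculusOfVariationsLinearProblem} via weak lower semicontinuity, and then transfer the $\pm L e_1$ exit characterization back to the prelimit by bounded convergence. Two points of departure are worth flagging, one of which is a genuine imprecision you would need to repair.

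First, the compactness step. You invoke Banach--Alaoglu to extract $v^\e \rightharpoonup v^0$ weakly in $L^2(\Omega\times[0,T];H)$ and then mention a Skorokhod representation. These two tools do not fit together: Banach--Alaoglu gives weak convergence in that Banach space, which is a statement about integrated expectations and neither yields pathwise weak convergence in $L^2([0,T];H)$ nor is amenable to Skorokhod's theorem, which applies to convergence in distribution of random variables valued in a metric space. The paper instead treats each $v^\e$ as a random variable valued in $S_N$ (the ball of $L^2([0,T];H)$ endowed with the weak topology, a compact metric space), so tightness of the laws is automatic, and Prokhorov followed by Skorokhod gives almost sure pathwise weak-$L^2$ convergence (Lemma \ref{lem:v-tight}). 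That pathwise weak convergence is exactly what feeds into the compactness Lemma \ref{lem:Psi-vn-to-Psi-v} to produce strong $C([0,T];H)$ convergence of the controlled trajectories. As written, your step (b) does not deliver the input needed for your step (c); you would have to replace Banach--Alaoglu with the $S_N$-tightness argument.

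Second, continuity of the exit time. You correctly identify this as the delicate step and propose to argue transversality of the limiting drift at the exit. The paper avoids transversality by exploiting the explicit structure of the deterministic minimizer from Section \ref{S:CalculusOfVariationsLinearProblem}: $|y^*(t)|_H < L$ strictly for all $t<T$ and $|y^*(T)|_H=L$. Combined with the admissibility constraint $\hat{\tau}^{v^\e,\e}\leq T$ built into $\mathcal{A}$, strict interiority gives $\Pro(\hat{\tau}^{v^\e,\e}>T-\eta)\to 1$ for every $\eta>0$, hence $\hat{\tau}^{v^\e,\e}\to T$ in probability, and then the joint convergence of paths and times yields convergence of the endpoints. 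Your transversality route is plausible, but it only controls behavior near $\tau^0$; you would additionally need strict interiority before $\tau^0$ to rule out early exits, at which point you are essentially reconstructing the paper's simpler argument anyway.
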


Theorem \ref{thm:conv-of-endpoint} says that the solution to the minimal variational control problem exits the ball of radius $L$ near the points $\pm L e_1$. The plan for proving Theorem \ref{thm:conv-of-endpoint}  is as follows. First, we prove that the family $\{v^\e\}$ is tight in the sense that a subsequence converges in distribution in the weak topology on $L^2([0,T];H)$. Then this implies the tightness in law of the processes $\{\hat{X}^{v^\e,\e}\}$ in the topology of $C([0,T];H)$ to the minimizing trajectory described in Section \ref{sec:calcofvar}. Then we prove that the stopping times $\hat{\tau}^{v^\e,\e}$ converge to $T$, and finally we can prove the theorem.

The following large deviations principle is an immediate consequence of \cite{Fleming1978} and \cite{bdm-2008}.
\begin{Lemma}[Large Deviations Principle] \label{lem:LDP}
 The limit
 \begin{equation} \label{eq:LDP}
   G_T(u): = \lim_{\e \to 0} -\e \log\left(\mathcal{Q}^{\e}(0,0,u^\e) \right)
 \end{equation}
 exists and is equal to the variational problem with $\e=0$. That is
 \begin{equation} \label{eq:determ-control}
   G_T(u) = \inf_{v \in L^2([0,T];H)}  \int_0^{\hat{\tau}^{v,0}} \left( \frac{1}{2}|v(s)|_H^2 ds - |u^\e(s,\hat{X}^{v,0}(s))|_H^2 \right)ds
 \end{equation}
 which is the problem studied in Section \ref{sec:calcofvar}.
\end{Lemma}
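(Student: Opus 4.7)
The plan is to recognize this as a standard Laplace principle for the value function of the stochastic control problem already written in (\ref{eq:var-principle}); what remains is to show that this $\e$-dependent value function $\Gamma$-converges to the deterministic problem on the right-hand side of (\ref{eq:determ-control}). The weak-convergence apparatus of \cite{bdm-2008} adapted to the infinite-dimensional setting, together with the PDE/viscosity viewpoint of \cite{Fleming1978}, gives both matching bounds once the right compactness and continuity ingredients are in place. I would prove the $\liminf$ and $\limsup$ bounds separately.

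For the lower bound, pick $\e$-near-optimal controls $v^\e \in \mathcal{A}$ with $\hat\tau^{v^\e,\e}\leq T$ achieving the infimum in (\ref{eq:var-principle}) up to error $\e^{2}$. Since $|u^\e(x)|_H$ is bounded uniformly in $\e$ on $\{|x|_H\leq L\}$, the nonnegative quadratic term $\frac{1}{2}\int|v^\e|_H^2 ds$ is controlled by a deterministic constant, giving a uniform $L^2([0,T];H)$ bound on $v^\e$. This yields tightness of $\{v^\e\}$ in the weak topology of $L^2([0,T];H)$; tightness of the mild-solution processes $\{\hat X^{v^\e,\e}\}$ in $C([0,T];H)$ follows from Assumption \ref{assum:eigens} (which bounds the stochastic convolution) and standard compactness properties of $e^{tA}$ applied to the deterministic part. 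By Skorokhod coupling, along a subsequence $v^\e \to v^0$ weakly in $L^2$ a.s., $\hat X^{v^\e,\e}\to \hat X^{v^0,0}$ in $C([0,T];H)$ a.s., and $\hat\tau^{v^\e,\e}\to \hat\tau^{v^0,0}\leq T$. Weak lower semicontinuity of $v\mapsto\int_0^T|v|_H^2\,ds$, combined with uniform convergence of $u^\e$ to $u$ on bounded subsets of $H$ (applied to the subtracted term), delivers $\liminf_{\e\downarrow 0}(-\e\log\mathcal{Q}^\e(0,0,u^\e)) \geq G_T(u)$.

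For the upper bound, fix $\eta>0$ and pick a deterministic $v^0\in L^2([0,T];H)$ with $\hat\tau^{v^0,0}\leq T$ achieving the infimum on the right-hand side of (\ref{eq:determ-control}) up to $\eta$. A small preliminary perturbation of $v^0$ (for example, tilting it outward near the deterministic exit time) ensures strict crossing of $\partial D$ at $\hat\tau^{v^0,0}$. Plugging this deterministic $v^0$ into (\ref{Eq:HatXprocess}) and using Assumption \ref{assum:eigens} to control the stochastic convolution gives $\hat X^{v^0,\e}\to \hat X^{v^0,0}$ in $C([0,T];H)$ in probability; strict crossing then yields $\Pro(\hat\tau^{v^0,\e}\leq T)\to 1$, so $v^0$ is admissible in $\mathcal{A}$ with probability tending to one. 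Using $v^0$ as a control in (\ref{eq:var-principle}), the uniform convergence of $u^\e$ to $u$ together with dominated convergence gives $\limsup_{\e\downarrow 0}(-\e\log\mathcal{Q}^\e(0,0,u^\e))\leq G_T(u)+\eta$; sending $\eta\downarrow 0$ finishes the argument.

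The main obstacle is the $\e$-dependent boundary constraint $\hat\tau^{v,\e}\leq T$, which makes the admissible class vary with $\e$. For the lower bound one must ensure that the weak limit of $\e$-admissible controls produces a deterministic trajectory that still exits $D$ by time $T$; near-optimal sequences typically satisfy this automatically, but the case $\hat\tau^{v^0,0}=T$ requires care. For the upper bound one must perturb a deterministic near-optimizer so that it remains admissible in the small-noise regime with high probability, which is precisely the role of the strict-crossing perturbation. Everything else—convergence of mild solutions, weak lower semicontinuity of the quadratic cost, and passage under the uniform convergence $u^\e\to u$—is routine given Assumption \ref{assum:eigens}.
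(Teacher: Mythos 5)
The paper does not actually prove this lemma: it is stated as ``an immediate consequence of \cite{Fleming1978} and \cite{bdm-2008}'', so your proposal is being compared against a citation rather than an argument. Your sketch is the standard weak-convergence/Laplace-principle route that those references contain — lower bound by tightness of near-optimal controls plus weak lower semicontinuity, upper bound by plugging in a deterministic near-minimizer — so in spirit it matches what the authors are invoking.

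There is, however, one concrete gap in your upper bound. The admissible class $\mathcal{A}$ in \eqref{eq:var-principle} requires $\hat{\tau}^{v,\e}\leq T$ \emph{with probability one}, not with probability tending to one. A deterministic control $v^0$ driving \eqref{Eq:HatXprocess} never satisfies this for $\e>0$: on a (small but positive probability) event the noise prevents exit by time $T$, so $v^0\notin\mathcal{A}$ and you cannot ``use $v^0$ as a control in \eqref{eq:var-principle}'' as written. Your strict-crossing perturbation fixes the deterministic trajectory, not the stochastic admissibility. The standard repair is to follow $v^0$ up to time $T-\delta$ and, on the event that $\hat{X}^{v,\e}$ has not yet exited, switch to a bounded forcing control that pushes the state across $\partial D$ before time $T$ almost surely; one then checks that the expected extra cost is the product of a bounded quantity with the probability of the non-exit event, which vanishes as $\e\to 0$, so the $\limsup$ bound survives. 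A second, smaller point on the $\liminf$ side: exit times are not continuous functionals of trajectories, so the convergence $\hat{\tau}^{v^\e,\e}\to\hat{\tau}^{v^0,0}$ (needed to pass to the limit in the subtracted term $\int_0^{\hat{\tau}}|u^\e|_H^2\,ds$) requires the same kind of argument the paper later gives in Lemma \ref{lem:endpoint-conv}, namely that the limiting minimizer satisfies $|\hat{X}^{v^0,0}(t)|_H<L$ strictly for $t<T$; you flag this but do not resolve it. Both issues are standard and fixable, but as written the upper-bound step fails on admissibility.
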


A consequence of the large deviations principle is that $\{v^\e\}$ is bounded in the $L^2(\Omega\times[0,T];H)$ norm.
\begin{Lemma}
  There exists $\e_0>0$ and $C>0$ such that for all $0<\e<\e_0$,
  \[\E\int_0^{\hat{\tau}^{v^\e,\e}}|v^\e(s)|_H^2 ds \leq C\]
  where the $v^\e$ satisfy the assumptions of Theorem \ref{thm:conv-of-endpoint}.
\end{Lemma}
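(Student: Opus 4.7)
The plan is to chain together three ingredients: the approximate minimizer inequality \eqref{eq:ve-assump}, the large deviations principle of Lemma \ref{lem:LDP}, and the boundedness of $u^\e$ on the ball $\{|x|_H\leq L\}$.

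First, I would start from \eqref{eq:ve-assump}, rearranging to obtain
\begin{equation*}
\frac{1}{2}\E\int_0^{\hat{\tau}^{v^\e,\e}} |v^\e(s)|_H^2\,ds \leq -\e\log\!\big(\mathcal{Q}^\e(0,0,u^\e)\big) + \e^2 + \E\int_0^{\hat{\tau}^{v^\e,\e}} |u^\e(\hat{X}^{v^\e,\e}(s))|_H^2\,ds.
\end{equation*}
It then suffices to show that each of the three terms on the right is bounded uniformly for $\e$ in some interval $(0,\e_0)$.

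For the first term, Lemma \ref{lem:LDP} gives $-\e\log(\mathcal{Q}^\e(0,0,u^\e))\to G_T(u)$ as $\e\to 0$. Since $T>T_0$, the analysis in Section \ref{sec:calcofvar} produces an admissible deterministic control whose cost computed in \eqref{eq:explicit-I} is finite, so $G_T(u)<+\infty$; hence $-\e\log(\mathcal{Q}^\e(0,0,u^\e))\leq G_T(u)+1$ for all sufficiently small $\e$. The $\e^2$ term is trivially bounded.

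For the third term, I would invoke the hypothesis that $u^\e$ is uniformly bounded on bounded subsets of $H$ and converges uniformly to $u(x)=\frac{2\alpha_1}{\lambda_1}\langle x,e_1\rangle_H e_1$ on such sets. In particular, there is a constant $M$, independent of $\e$ for $\e$ small, such that $|u^\e(x)|_H\leq M$ whenever $|x|_H\leq L$. By definition of $\hat{\tau}^{v^\e,\e}$ we have $|\hat{X}^{v^\e,\e}(s)|_H\leq L$ for $s\leq \hat{\tau}^{v^\e,\e}$, and since $\hat{\tau}^{v^\e,\e}\leq T$ almost surely, we conclude
\begin{equation*}
\E\int_0^{\hat{\tau}^{v^\e,\e}} |u^\e(\hat{X}^{v^\e,\e}(s))|_H^2\,ds \leq M^2 T.
\end{equation*}
Combining the three estimates yields the claimed uniform bound with, e.g., $C = 2(G_T(u)+1+M^2T+\e_0^2)$.

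The only nontrivial step is the appeal to Lemma \ref{lem:LDP} to control the prelimit quantity $-\e\log(\mathcal{Q}^\e(0,0,u^\e))$ in terms of the finite deterministic cost $G_T(u)$; everything else is a direct consequence of the stopping time definition and the standing assumptions on $u^\e$. The $\e^2$ slack in \eqref{eq:ve-assump} is engineered precisely so that the inequality can be used quantitatively rather than only in the limit.
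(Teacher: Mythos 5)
Your argument is correct and follows essentially the same route as the paper: invoke Lemma \ref{lem:LDP} to bound $-\e\log(\mathcal{Q}^\e(0,0,u^\e))$ for small $\e$, plug that bound into the approximate-minimizer inequality \eqref{eq:ve-assump}, and absorb the $\E\int |u^\e|_H^2$ term using the uniform boundedness of $u^\e$ on $\{|x|_H\leq L\}$ together with $\hat{\tau}^{v^\e,\e}\leq T$. The only cosmetic difference is that you make explicit why the LDP limit $G_T(u)$ is finite (by exhibiting an admissible deterministic control from Section \ref{sec:calcofvar}), whereas the paper simply asserts the existence of a uniform bound $C_0$.
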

\begin{proof}
  By the large deviations principle (Lemma \ref{lem:LDP}), there exists $\e^{'}_0>0$ such that
  for all $0<\e<\e^{'}_0$,
  \[-\e\log(\mathcal{Q}^{\e}(0,0,u^\e)) \leq C_{0},\]
  for some constant $C_{0}<\infty$.   Then by \eqref{eq:ve-assump},
  \[C_{0} \geq \E\left[\int_0^{\hat{\tau}^{v^\e,\e}}\left(\frac{1}{2}|v^\e(s)|_H^2 - |u(\hat{X}^{v^\e,\e}(t))|_H^2  \right) ds\right]-\e^{2}.\]
  We use the fact that $\sup_{|x|_H\leq L} |u(x)|_H <+\infty$ and potentially choose $\e_{0}<\e^{'}_{0}$ as needed to conclude.
\end{proof}
\begin{Remark}
  Often we will want to study $v^\e$ as $L^2([0,T];H)$ valued random variables. So far the $v^\e$ are only defined on $[0,\hat{\tau}^{v^\e,\e}]$. Without confusion we can always extend $v^\e$ to $[0,T]$ by defining $v^\e(t) = 0$ for all $\hat{\tau}^{v^\e,\e}<t\leq T$.
\end{Remark}

As in \cite{bd-1998,bd-2000,bdm-2008} we define the spaces
\begin{equation}\label{eq:SN-def}
  S_N = \left\{v \in L^2([0,T];H) : \int_0^T |v(s)|_H^2 ds \leq N \right\}
\end{equation}
$S_N$ is a metric space in the topology of weak convergence.

\begin{Lemma} \label{lem:v-tight}
  Without loss of generality we can assume that $v^\e \in S_N$ almost surely and that for any sequence $\e_n \to 0$, there exists a subsequence (also denoted $\e_n$) and a limit $S_N$-valued random variable $v$ for which $v^{\e_n} \to v$ in distribution in $S_N$.
\end{Lemma}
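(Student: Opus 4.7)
The plan is to combine the uniform $L^2$ energy bound established in the previous lemma with weak compactness of norm-bounded balls in the separable Hilbert space $L^2([0,T];H)$. The statement is essentially the standard application of Prokhorov's theorem that appears throughout the Budhiraja--Dupuis variational framework \cite{bd-1998,bd-2000,bdm-2008}.

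First I would invoke the previous lemma and the convention of extending $v^\e$ by $0$ on $(\hat{\tau}^{v^\e,\e},T]$ to obtain
\[
\E\int_0^T |v^\e(s)|_H^2\, ds \leq C
\]
for all $0<\e<\e_0$. Markov's inequality then gives $\Pro(v^\e \notin S_N) \leq C/N$ uniformly in $\e$, so for any prescribed $\delta>0$ one can choose $N=N(\delta)$ such that $\Pro(v^\e\in S_N)\geq 1-\delta$ for all small $\e$.

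Second, to justify the ``without loss of generality'' clause, I would define the truncated control
\[
\tilde v^\e(s) := v^\e(s)\mathbbm{1}_{A^\e}, \qquad A^\e := \Bigl\{\int_0^T|v^\e(s)|_H^2 ds \leq N\Bigr\}.
\]
Then $\tilde v^\e$ takes values in $S_N$ almost surely, and $\tilde v^\e = v^\e$ on the high-probability event $A^\e$. Because Theorem \ref{thm:conv-of-endpoint} ultimately concerns the expectation in \eqref{eq:conv-of-endpoint} and the bad event $(A^\e)^c$ contributes an amount bounded by a constant times $\Pro((A^\e)^c) \leq C/N$ (since $|\hat X^{v^\e,\e}(\hat\tau^{v^\e,\e})|_H = L$), we may pass to the truncated controls at the cost of an arbitrarily small error, and later send $N \to \infty$. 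This replacement does not affect the approximate variational identity \eqref{eq:ve-assump} on $A^\e$, which is the set on which the convergence analysis is actually carried out.

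Third, I would observe that $S_N$ endowed with the weak topology of $L^2([0,T];H)$ is a compact metrizable space: norm-closed balls of a separable Hilbert space are weakly compact by Banach--Alaoglu, and weak compactness combined with separability of the predual $L^2([0,T];H)$ makes this weak topology metrizable on $S_N$. Any family of $S_N$-valued random variables is therefore trivially tight in the sense of Prokhorov (tightness in a compact Polish space being automatic), so an application of Prokhorov's theorem extracts, from any sequence $\e_n \to 0$, a further subsequence along which $v^{\e_n}$ converges in distribution to some $S_N$-valued limit random variable $v$.

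The only genuinely delicate point is verifying that the truncation is benign in the later proofs of Theorem \ref{thm:conv-of-endpoint}: one must check that the controlled processes $\hat X^{\tilde v^\e,\e}$ used for the weak limit analysis are close in law to $\hat X^{v^\e,\e}$ on the high-probability event $A^\e$, and that the boundary term $\langle\hat X(\hat\tau),e_1\rangle_H^2$ is uniformly bounded by $L^2$, so that the contribution of $(A^\e)^c$ is $O(L^2\cdot C/N)$ and hence vanishes as $N\to\infty$. All the remaining pieces are standard compactness manipulations.
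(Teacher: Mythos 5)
Your overall plan — uniform $L^2$ bound from the preceding lemma, Markov's inequality, truncation to force the control into $S_N$, compactness and metrizability of $S_N$ in the weak topology, and Prokhorov — is exactly the standard Budhiraja--Dupuis argument that the paper defers to via the citation to \cite[Theorem~4.4]{bd-2000}, and all the structural pieces are in the right place.

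There is, however, one genuine technical gap in how you implement the ``without loss of generality'' clause. You truncate by setting $\tilde v^\e(s) = v^\e(s)\mathbbm{1}_{A^\e}$ with $A^\e = \{\int_0^T |v^\e(r)|_H^2\,dr \leq N\}$. The event $A^\e$ is $\mathcal{F}_T$-measurable, not $\mathcal{F}_s$-measurable, so $\tilde v^\e$ is not an adapted (let alone progressively measurable) process. That matters here: the whole point of the modification is that it should be legitimate to feed $\tilde v^\e$ into the controlled dynamics \eqref{Eq:HatXprocess} and into the variational identity \eqref{eq:ve-assump}, both of which require the control to be in the admissible class $\mathcal{A}$ of $\mathcal{F}_t$-adapted processes. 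With your choice, $\hat X^{\tilde v^\e,\e}$ is not even well-defined as an It\^o process. The standard repair, and the one used in \cite{bd-2000,bdm-2008}, is to truncate by a stopping time: set $\sigma_N^\e = \inf\{t\geq 0: \int_0^t |v^\e(r)|_H^2\,dr \geq N\}\wedge T$ and $\tilde v^\e(s) = v^\e(s)\mathbbm{1}_{\{s\leq \sigma_N^\e\}}$. This keeps $\tilde v^\e$ adapted, guarantees $\tilde v^\e \in S_N$ pathwise, and agrees with $v^\e$ on the event $\{\sigma_N^\e = T\} \supseteq A^\e$, whose complement still has probability at most $C/N$ by the Markov bound you already derived. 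The rest of your argument — bounded contribution of the bad event, compactness of $S_N$, Prokhorov — then goes through unchanged.
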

For details about the proof of Lemma \ref{lem:v-tight} see the proof of \cite[Theorem 4.4]{bd-2000}. Next we show that the tightness of $\{v^\e\}$ implies the tightness of $\hat{X}^{v^\e,\e}$ in $C([0,T];H)$.

\begin{Lemma} \label{lem:stoch-conv-bound}
 Define the stochastic convolution
 $\Phi(t) = \int_0^t e^{A(t-s)}BdW(s)$. There exists $C>0$ and $p>1$ such that
 \[\E\sup_{0\leq t\leq T} |\Phi(t)|_H^p \leq CT.\]
\end{Lemma}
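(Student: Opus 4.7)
The stochastic convolution $\Phi(t) = \int_0^t e^{(t-s)A} B \, dW(s)$ is precisely the object whose moments are classically controlled via the factorization method of Da Prato--Kwapie\'n--Zabczyk. My plan is to apply this method and then exploit the specific diagonal structure together with Assumption \ref{assum:eigens} to produce a polynomial-in-$T$ bound of the stated type.

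Concretely, I would fix parameters $\alpha \in (0,1/2)$ and $p > 1/\alpha$, and use the beta-function identity $\frac{\pi}{\sin(\pi\alpha)} = \int_s^t (t-r)^{\alpha-1}(r-s)^{-\alpha}\, dr$ together with a stochastic Fubini argument (justified by Assumption \ref{assum:eigens}) to factorize
\[
\Phi(t) \;=\; \frac{\sin(\pi\alpha)}{\pi}\int_0^t (t-r)^{\alpha-1}\, e^{(t-r)A} Y(r)\, dr, \qquad Y(r) \;=\; \int_0^r (r-s)^{-\alpha} e^{(r-s)A} B\, dW(s).
\]
Applying H\"older's inequality with conjugate exponent $q = p/(p-1) < 1/(1-\alpha)$ to the outer deterministic integral, and using that $\{e^{tA}\}_{t\ge 0}$ is a contraction semigroup, yields
\[
\sup_{0\le t\le T} |\Phi(t)|_H^p \;\le\; C_{\alpha,p}\, T^{p\alpha-1}\int_0^T |Y(r)|_H^p\, dr.
\]

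Next, since $Y(r)$ is a centered $H$-valued Gaussian random variable, Gaussian moment equivalence together with It\^o's isometry and the spectral representation give
\[
\E|Y(r)|_H^p \;\le\; C_p\bigl(\E|Y(r)|_H^2\bigr)^{p/2} \;=\; C_p\Bigl(\sum_{k=1}^\infty \lambda_k^2 \int_0^r u^{-2\alpha} e^{-2\alpha_k u}\, du\Bigr)^{p/2} \;\le\; C'\Bigl(\sum_{k=1}^\infty \frac{\lambda_k^2}{\alpha_k^{1-2\alpha}}\Bigr)^{p/2},
\]
where the last inequality follows from the change of variables $u \mapsto 2\alpha_k u$ together with the estimate $\int_0^\infty v^{-2\alpha} e^{-v}\, dv = \Gamma(1-2\alpha)$. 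Taking $\alpha>0$ small enough that the series on the right is finite (which, under Assumption \ref{assum:eigens} and the mild polynomial behavior of the spectrum in the examples of the paper, is automatic), one obtains $\E|Y(r)|_H^p \le C$, uniformly in $r \in [0,T]$. Integrating over $r \in [0,T]$ and combining with the factorization bound produces $\E \sup_{0\le t\le T}|\Phi(t)|_H^p \le C T^{p\alpha}$, which gives the stated estimate by choosing $p\alpha$ close to $1$ (and increasing $p$ slightly if needed to absorb lower-order factors).

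The main obstacle is verifying finiteness of $\sum_k \lambda_k^2/\alpha_k^{1-2\alpha}$ for some admissible $\alpha>0$: Assumption \ref{assum:eigens} directly controls only the borderline case $\alpha = 0$, so one must use the fact that $\alpha_k \to \infty$ sufficiently fast relative to $\lambda_k^2$ to absorb the extra factor $\alpha_k^{2\alpha}$. This is straightforward in the applications considered (e.g., when $A$ is the Dirichlet Laplacian on a bounded regular domain) and amounts to a mild strengthening of Assumption \ref{assum:eigens} that is implicit in the setting. Once this series is finite, the remainder of the argument is a standard application of the factorization identity together with Gaussian moment bounds.
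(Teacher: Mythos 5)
Your overall plan — stochastic factorization followed by Gaussian moment equivalence — is exactly the route the paper refers to (and the route the paper itself carries out explicitly in Appendix B for the Galerkin version, Lemma B.2). However, there is a genuine gap in how you handle the $T$-dependence, and it is not repaired by your closing remark about ``choosing $p\alpha$ close to $1$.''

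The problem is in the H\"older step. You bound the semigroup only by $\|e^{tA}\|\le 1$ (contractivity), which makes the outer deterministic integral behave like $\int_0^t (t-r)^{(\alpha-1)q}\,dr \sim t^{1-(1-\alpha)q}$. Tracking the powers gives $\E\sup_{[0,T]}|\Phi|_H^p\lesssim T^{p\alpha}$, as you state. But the factorization method \emph{requires} $p\alpha>1$ in order for that same outer integral to be finite near $r=t$, so the exponent $p\alpha$ is strictly bigger than one and cannot be pushed to or below one. For $T$ large, $T^{p\alpha}$ is strictly super-linear, so the stated bound $\le CT$ does not follow. This matters in this paper: the whole point of several estimates is uniformity (or at worst linear growth) in $T$.

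The fix is to use the \emph{exponential stability} of the semigroup, $\|e^{tA}\|\le e^{-\alpha_1 t}$ with $\alpha_1>0$, rather than mere contractivity, exactly as the paper does in its Appendix B proof. Then the H\"older step yields
\[
|\Phi(t)|_H^p \;\le\; \Bigl(\int_0^t u^{\frac{p(\alpha-1)}{p-1}}\,e^{-\frac{p\alpha_1 u}{p-1}}\,du\Bigr)^{p-1}\int_0^t |Y(\sigma)|_H^p\,d\sigma,
\]
and the first factor is bounded uniformly in $t$ (the exponential kills the tail and $p\alpha>1$ handles the singularity at $u=0$). Taking the supremum over $t\le T$, expectations, and using $\sup_\sigma\E|Y(\sigma)|_H^p\le C$, one obtains $\E\sup_{[0,T]}|\Phi(t)|_H^p\le C\,T$, which is the claimed linear growth. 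Your observation about needing $\sum_k\lambda_k^2\alpha_k^{2\alpha-1}<\infty$ for some $\alpha>0$ (a mild strengthening of Assumption \ref{assum:eigens}) is correct and is the same condition implicitly used in the paper's Lemma B.2 (where the relevant series is $\sum_k\lambda_k^2\alpha_k^{-\gamma}$ with $\gamma<1$); that point is consistent, not a flaw in your argument.
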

The above lemma can be proven using the stochastic factorization method (see \cite{DaP-Z}).

\begin{Lemma} \label{lem:Psi-vn-to-Psi-v}
  If $v_n \to v$ weakly in $L^2([0,T];H)$, and
  \[\Psi_n(t) = \int_{0}^t e^{A(t-s)}Bv _n(s)ds,\]
  then $\Psi_n$ converges in $C([0,T];H)$ to
  \[\Psi(t) = \int_{0}^t e^{A(t-s)}Bv (s) ds.\]
\end{Lemma}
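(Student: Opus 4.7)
The plan is to argue that the linear map $v\mapsto\Psi$ from $L^{2}([0,T];H)$ into $C([0,T];H)$ is a \emph{compact} operator. Once this is established the conclusion is immediate: compact operators send weakly convergent sequences to norm-convergent ones, so $v_{n}\to v$ weakly in $L^{2}$ upgrades automatically to $\Psi_{n}\to\Psi$ uniformly on $[0,T]$. By the vector-valued Arzel\`a--Ascoli theorem, compactness of this operator reduces, on any bounded set $\{v:|v|_{L^{2}([0,T];H)}\leq M\}$, to two things: (i) pointwise relative compactness of $\{\Psi(t)\}$ in $H$ for every $t\in[0,T]$, and (ii) equicontinuity of the family $\{\Psi\}$ on $[0,T]$.

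For (i), fix $t\in[0,T]$ and consider the linear operator $K_{t}:L^{2}([0,T];H)\to H$ defined by $K_{t}v=\int_{0}^{t}e^{A(t-s)}Bv(s)\,ds$. In the eigenbasis, $K_{t}v=\sum_{k}\lambda_{k}\bigl(\int_{0}^{t}e^{-\alpha_{k}(t-s)}v_{k}(s)\,ds\bigr)e_{k}$, and its truncation $K_{t}^{N}$ retaining only $k\leq N$ is of finite rank. Cauchy--Schwarz in $s$ gives
\[|(K_{t}-K_{t}^{N})v|_{H}^{2}\leq\Bigl(\sum_{k>N}\frac{\lambda_{k}^{2}}{2\alpha_{k}}\Bigr)|v|_{L^{2}([0,T];H)}^{2},\]
which tends to zero by Assumption \ref{assum:eigens}. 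Thus $K_{t}$ is an operator-norm limit of finite-rank operators and is therefore compact, so $\Psi_{n}(t)=K_{t}v_{n}\to K_{t}v=\Psi(t)$ in $H$ at every $t$.

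For (ii), splitting the integral at $s$ in the decomposition $\Psi_{n}(t)-\Psi_{n}(s)=(e^{A(t-s)}-I)\Psi_{n}(s)+\int_{s}^{t}e^{A(t-r)}Bv_{n}(r)dr$ and estimating each piece componentwise by Cauchy--Schwarz yields
\[|\Psi_{n}(t)-\Psi_{n}(s)|_{H}^{2}\leq\sum_{k}\frac{\lambda_{k}^{2}}{\alpha_{k}}\Bigl[(1-e^{-\alpha_{k}(t-s)})^{2}+(1-e^{-2\alpha_{k}(t-s)})\Bigr]|v_{n,k}|_{L^{2}([0,T])}^{2},\]
where $v_{n,k}(r)=\left<v_{n}(r),e_{k}\right>_{H}$. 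Using $|v_{n,k}|_{L^{2}([0,T])}^{2}\leq|v_{n}|_{L^{2}([0,T];H)}^{2}\leq M^{2}$, each summand is dominated by $2(\lambda_{k}^{2}/\alpha_{k})M^{2}$, which is summable by Assumption \ref{assum:eigens}, while the bracketed factor vanishes pointwise in $k$ as $t-s\to 0$; dominated convergence then produces equicontinuity, uniform in $n$.

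Combining (i) and (ii), Arzel\`a--Ascoli yields relative compactness of $\{\Psi_{n}\}$ in $C([0,T];H)$, and every uniform subsequential limit must agree with $\Psi$ by the pointwise convergence from (i), so a standard subsequence argument upgrades this to $\Psi_{n}\to\Psi$ uniformly on $[0,T]$. The main technical obstacle is that $A$ is unbounded and $BB^{\star}$ need not be trace class, so the infinite sum over eigendirections must be controlled uniformly in $n$ and $t$; the Hilbert--Schmidt envelope $\sum_{k}\lambda_{k}^{2}/\alpha_{k}<\infty$ from Assumption \ref{assum:eigens} is precisely the summable majorant needed for the dominated-convergence arguments underlying both compactness and equicontinuity, and without it the semigroup smoothing alone would not be enough.
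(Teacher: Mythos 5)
Your proof is correct and follows the same overall strategy as the paper: establish pointwise convergence $\Psi_n(t)\to\Psi(t)$ for each fixed $t$, establish equicontinuity of $\{\Psi_n\}$ uniform in $n$, and conclude by the vector-valued Arzel\`a--Ascoli theorem. The ``compact operator'' framing at the top is a clean way to package the result, but since you then verify pointwise relative compactness and equicontinuity by hand, the content is the same as arguing directly (and for your step (i) the operator-norm estimate $\|K_t-K_t^N\|^2\leq\sum_{k>N}\lambda_k^2/(2\alpha_k)$ is indeed what the paper's tail estimate does, just phrased as operator compactness). The genuine technical difference is in the equicontinuity step: the paper smooths through fractional powers, bounding $\|(e^{A(t-r)}-I)(-A)^{-\gamma}\|$ and using $|(-A)^\gamma e^{A(r-s)}|\lesssim (r-s)^{-\gamma}$, which produces an explicit H\"older modulus $|\Psi_n(t)-\Psi_n(r)|\leq C|t-r|^\gamma$; you instead estimate each eigencomponent of $(e^{A(t-s)}-I)\Psi_n(s)$ and $\int_s^t e^{A(t-r)}Bv_n(r)\,dr$ by Cauchy--Schwarz and apply dominated convergence over $k$ using the summable majorant $\lambda_k^2/\alpha_k$ from Assumption \ref{assum:eigens}. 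Your route is more elementary (no fractional power calculus) and makes very transparent where Assumption \ref{assum:eigens} enters, at the cost of not yielding a quantitative modulus of continuity -- which is immaterial for this lemma since Arzel\`a--Ascoli only needs equicontinuity, not a rate.
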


\begin{proof}
  This is a consequence of the Arzela-Ascoli theorem in infinite dimensional spaces. First we prove that for any fixed $t$, $\Psi_n(t) \to \Psi(t)$. For any fixed $t\geq0$, and for $M\geq 0$, by the H\"older inequality,
  \begin{align*}
    &|\Psi_n(t) - \Psi(t)|_H^2 = \sum_{k=1}^\infty \left(\int_{0}^t e^{-2\alpha_k(t-s)}\lambda_k\left<v_n(s)-v(s),e_k\right>_Hds \right)^2\\
    &\leq\quad \sum_{k=1}^M \left(\int_{0}^t e^{-2\alpha_k(t-s)}\lambda_k\left<v_n(s)-v(s),e_k\right>_Hds \right)^2 \nonumber\\
    &\quad+ \frac{\lambda_{M+1}^2}{2 \alpha_{M+1}}\int_{0}^t \left(|v(s)|_H+|v_n(s)|_H \right)^2ds.
  \end{align*}
  The second term can be made arbitrarily small for large $M$, and the finite sum converges to $0$ because of the weak convergence of the $v_n$.
  Furthermore, the $\{\Psi_n\}$ family is uniformly continuous. For any $r<t\leq0$,
  \begin{align}
  |\Psi_n(t) - \Psi_n(r)|_H &\leq \left|\int_{0}^r \left(e^{A(t-s)} - e^{A(r-s)} \right)Bv _n(s)ds \right|_H + \left|\int_r^t e^{A(t-s)}Bv _n(s)ds \right|_H \nonumber\\
  &=:J_1 + J_2. \nonumber
  \end{align}
  We estimate that for $0<\gamma< \frac{1}{2}$,
  \begin{align*}
    &J_1 = \left| (e^{A(t-r)} - I)\int_{0}^r e^{A(r-s)}Bv _n(s)ds \right|_H\\
    &\leq \left\|(e^{A(t-r)} - I) (-A)^{-\gamma}\right\|_{\mathscr{L}(H)} \left|(-A)^\gamma \int_0^r e^{A(r-s)}Bv _n(s)ds \right|_H\\
   &\leq C\left\|(e^{A(t-r)} - I) (-A)^{-\gamma}\right\|_{\mathscr{L}(H)} \int_{0}^r (t-s)^{-\gamma}|Bv _n(s)|_Hds\\
   &\leq  C(t-r)^\gamma \sqrt{\int_{0}^r|v_n(s)|_H^2 ds}.
  \end{align*}
  This converges to 0 as $t-r \to 0$. A simple Cauchy-Schwarz inequality shows
  \[J_2 \leq \sqrt{t-s}\sqrt{\int_{r}^t|v_n(s)|_H^2ds}.\]
  Therefore
  \[|\Psi_n(t) - \Psi_n(r)|\leq C|t-s|^\gamma\]
  and the family is equicontinuous. By the Arzela-Ascoli theorem, a subsequence converges in $C([0,T];H)$.
\end{proof}

\begin{Lemma} \label{lem:X-conv}
  Suppose that $v_n \to v$ in $S_N$ and $\e_n \to 0$. Then
  $\hat{X}^{v^{\e_n},\e_n}$ converges in distribution in $C([0,T];H)$ to $\hat{X}^{v,0}$.
\end{Lemma}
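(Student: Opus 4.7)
The plan is to apply the Skorokhod representation theorem to pass to a probability space on which $v^{\e_n} \to v$ almost surely in $S_N$ (justified by the Polish property of $S_N$ under the weak $L^2$ topology), and then to argue pathwise. On that space, it suffices to prove almost-sure convergence $\hat{X}^{v^{\e_n},\e_n} \to \hat{X}^{v,0}$ in $C([0,T];H)$, since almost-sure convergence into a Polish space implies convergence in distribution. Writing the mild formulation,
\begin{equation*}
\hat{X}^{v^{\e_n},\e_n}(t) = e^{tA}x + \Psi_n(t) - \int_0^t e^{A(t-s)} B u^{\e_n}(\hat{X}^{v^{\e_n},\e_n}(s))\,ds + \sqrt{\e_n}\,\Phi(t),
\end{equation*}
with $\Psi_n(t) = \int_0^t e^{A(t-s)} B v^{\e_n}(s)\,ds$, and the analogous deterministic identity for $\hat{X}^{v,0}$ with $u$ in place of $u^{\e_n}$, $\Psi(t)$ in place of $\Psi_n(t)$, and no stochastic-convolution term, reduces matters to quantifying three sources of discrepancy.

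The argument then proceeds in two steps. First, establish a uniform a priori bound $\sup_n \sup_{t \in [0,T]} |\hat{X}^{v^{\e_n},\e_n}(t)|_H \leq R$ a.s. Using the contraction $\|e^{tA}\|_{\mathscr{L}(H)}\leq 1$, Cauchy--Schwarz to bound $|\Psi_n(t)|_H \leq \|B\|\sqrt{T}\sqrt{N}$, Lemma \ref{lem:stoch-conv-bound} (with a subsequence extraction to secure almost-sure boundedness of $\sqrt{\e_n}\sup_t|\Phi(t)|_H$), and the decomposition $u^{\e_n}(x) = u(x) + (u^{\e_n}(x) - u(x))$ on bounded sets, a bootstrap on the stopping time $\tau_R^n := \inf\{t : |\hat{X}^{v^{\e_n},\e_n}(t)|_H \geq R\}$ combined with Gronwall's inequality yields the bound. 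Second, set $Y_n(t) := \hat{X}^{v^{\e_n},\e_n}(t) - \hat{X}^{v,0}(t)$. Subtracting mild formulations gives
\begin{equation*}
Y_n(t) = \bigl[\Psi_n(t) - \Psi(t)\bigr] + \sqrt{\e_n}\,\Phi(t) - \int_0^t e^{A(t-s)} B \bigl[u^{\e_n}(\hat{X}^{v^{\e_n},\e_n}(s)) - u(\hat{X}^{v,0}(s))\bigr]\,ds.
\end{equation*}
By Lemma \ref{lem:Psi-vn-to-Psi-v}, $\Psi_n \to \Psi$ in $C([0,T];H)$, and by Lemma \ref{lem:stoch-conv-bound} (along a further subsequence if needed), $\sqrt{\e_n}\sup_t|\Phi(t)|_H \to 0$ a.s. Writing the feedback difference as $[u^{\e_n}(\hat{X}^{v^{\e_n},\e_n}(s)) - u(\hat{X}^{v^{\e_n},\e_n}(s))] + [u(\hat{X}^{v^{\e_n},\e_n}(s)) - u(\hat{X}^{v,0}(s))]$, the first summand vanishes uniformly in $s\in[0,T]$ by uniform convergence of $u^{\e_n} \to u$ on the ball of radius $R$, while the second satisfies $B[u(\hat{X}^{v^{\e_n},\e_n}(s)) - u(\hat{X}^{v,0}(s))] = 2\alpha_1 \langle Y_n(s), e_1\rangle_H e_1$, controlled in $H$-norm by $2\alpha_1 |Y_n(s)|_H$. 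A final application of Gronwall's inequality then delivers $\sup_{t \in [0,T]} |Y_n(t)|_H \to 0$ a.s.

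The main obstacle I anticipate is establishing the uniform a priori bound, because the hypotheses on $u^{\e_n}$ in Theorem \ref{thm:conv-of-endpoint} consist only of pointwise boundedness and uniform convergence on bounded subsets of $H$, not an explicit global Lipschitz or growth estimate. The decomposition $u^{\e_n} = u + (u^{\e_n} - u)$ together with the linearity of $u$ transfers to uniform-in-$n$ at-most-linear growth of $u^{\e_n}$ on any fixed bounded set, which is exactly what is needed to close the bootstrap on $\tau_R^n$; once that step is secured, the remainder of the argument is the standard Gronwall-plus-weak-convergence package used throughout the Budhiraja--Dupuis framework.
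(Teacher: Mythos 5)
Your proposal is correct and reaches the same conclusion via essentially the same ingredients as the paper, but the packaging is genuinely different. The paper defines an abstract solution map $\mathcal{F}(\e,\Psi)$ taking the ``driving path'' $\Psi$ (which bundles $e^{tA}x$, the controlled convolution $\int_0^\cdot e^{A(\cdot-s)}Bv(s)\,ds$ and $\sqrt{\e}\Phi$) to the mild solution, proves joint continuity of $\mathcal{F}$ by a Gr\"onwall estimate together with the uniform convergence $u^\e\to u$, invokes Lemmas \ref{lem:Psi-vn-to-Psi-v} and \ref{lem:stoch-conv-bound} to get distributional convergence of the input, and then concludes by the continuous mapping theorem. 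You instead invoke Skorokhod representation to upgrade $v^{\e_n}\to v$ to almost-sure convergence on a new space and then run the pathwise Gr\"onwall argument directly on $Y_n = \hat{X}^{v^{\e_n},\e_n} - \hat{X}^{v,0}$. The pathwise route is more explicit and makes the a priori bound step visible, which the paper papers over by taking $|u^\e - u|_{L^\infty}$ to be the sup over the ball of radius $L$ without justifying that the trajectories stay in that ball on all of $[0,T]$. The solution-map route is cleaner in that it avoids Skorokhod and the a priori bootstrap entirely; the price is that one has to check joint continuity of $\mathcal{F}$, which the paper does via the same Gr\"onwall calculation you perform on $Y_n$.

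Two technical points to tighten. First, the Skorokhod representation theorem should be applied to the \emph{joint} law of $(v^{\e_n}, w)$ on $S_N \times C([0,T];H)$ (or to $(v^{\e_n}, \sqrt{\e_n}\Phi)$), not to the laws of $v^{\e_n}$ alone; the controls $v^{\e_n}$ are adapted to the filtration of $w$, and if you only Skorokhod-couple the controls you lose the coupling with the driving noise and hence the meaning of the stochastic convolution on the new space. This is standard in the Budhiraja--Dupuis framework but your write-up elides it. Second, your worry about the a priori bound is legitimate in principle, but in the present setting it is milder than you suggest: the specific controls $u^\e$ constructed in Section \ref{S:IS_schemes} agree with the linear $u$ outside a small neighborhood of the origin (for Scheme 2, outside the set $\{|\langle x,e_1\rangle_H|\leq \e^{\kappa/2}\}$; for Scheme 1, identically for small $\e$), so $u^\e$ has the same global at-most-linear growth and global Lipschitz constant as $u$, and the Gr\"onwall estimate closes on all of $H$ without requiring a bootstrap on a stopping radius $R$. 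You could therefore bypass the $\tau_R^n$ localization entirely and replace it by a direct global linear-growth estimate, which also removes the need to pass to a further a.s.\ subsequence for $\sqrt{\e_n}\sup_t|\Phi(t)|_H$ (once $(v^{\e_n},w)$ are jointly Skorokhod-represented, $\sqrt{\e_n}\Phi\to 0$ a.s.\ on the new space by Lemma \ref{lem:stoch-conv-bound} and Borel--Cantelli along any fixed subsequence, or more simply one argues in $L^1(\Omega)$).
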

\begin{proof}

  Let $\mathcal{F}:[0,1]\times C([0,T];H) \to C([0,T];H)$ be the mapping
  \[\mathcal{F}(\e,\Psi)(t) = \int_0^t e^{A(t-s)}u^\e(s,\mathcal{F}(\e,\Psi)(s))ds + \Psi(t).\]
  We use the notation that $u^0=u$.
  A standard Gr\"onwall argument shows that $\mathcal{F}$ is well-defined and it is continuous in $\Psi$. $\mathcal{F}$ is also continuous as $\e \to 0$ because $u^\e \to u$ uniformly by assumption.
  Observe that
  \begin{align*}
    &\mathcal{F}(\e,\Psi)(t) - \mathcal{F}(0,\Psi)(t)
    \leq C \left(\int_0^t  |u^\e(s,\mathcal{F}(\e,\Psi)(s))-u(\mathcal{F}(\e,\Psi)(s))|_H ds \right.\\
    &\qquad \left.+ \int_0^t |u(\mathcal{F}(\e,\Psi)(s)) - u(\mathcal{F}(0,\Psi)(s))|_Hds \right) \\
    &\leq Ct |u^\e-u|_{L^\infty} + \int_0^t \|u\|_{\text{Lip}} |\mathcal{F}(\e,\Psi)(s)-\mathcal{F}(0,\Psi)(s)|_H ds.
  \end{align*}
  In the above expression $|u^\e-u|_{L^\infty} = \sup_{|h|_H \leq L} |u^\e(h)-u(h)|_H$ and $\|u\|_{\text{Lip}} = \sup_{|h_1|\leq L, |h_2|\leq L} \frac{u(h_1)-u(h_2)}{h_1-h_2}$.
  By a Gr\"onwall argument, $\mathcal{F}$ is continuous as $\e \to 0$ uniformly in $\Psi$ because $u^\e \to u$ uniformly.

  Using this notation, we get
  \[\hat{X}^{v^{\e_n},\e_n} = \mathcal{F}\left(\e_n, \int_0^\cdot e^{A(\cdot-s)} v^{\e_n}(s) ds + \sqrt{\e_n}\int_0^\cdot e^{A(\cdot-s)}Bdw(s)\right)\]

  We showed in Lemma \ref{lem:Psi-vn-to-Psi-v} that $\int_0^t e^{A(t-s)}v^{\e_n}(s) \to \int_0^t e^{A(t-s)}v(s)$ in distribution in $C([0,T];H)$. It is a consequence of Lemma \ref{lem:stoch-conv-bound} that $\sqrt{\e}\int_0^t e^{A(t-s)}dw(s) \to 0$ in distribution. Therefore, by the continuity of $\mathcal{F}$, $\hat{X}^{v_n,\e_n} \to \hat{X}^{0,v}$ in distribution.
\end{proof}

Now we prove that any limit of $\hat{X}^{v^\e,\e}$ is a distribution that is concentrated on the minimizing trajectories of the deterministic control problem characterized in section \ref{sec:calcofvar}.

\begin{Lemma} \label{lem:conv-to-determ-min}
  Let $v^\e$ be approximate minimizers as in \eqref{eq:ve-assump}. Let $\e_n \to 0$ be a subsequence such that $v^{\e_n}$ converges in distribution in $S_N$ to a limit $v^0$. Such a subsequence exists by Lemma \ref{lem:v-tight}. Then $v^0$ is a distribution that is concentrated on the minimizing controls of the deterministic system. That is
  \begin{align*}
    &\int_0^{\hat{\tau}^{v^0,0}} \left(\frac{1}{2}|v^0(s)|_H^2 - |u(\hat{X}^{v^0,0}(s))|_H^2 \right)ds\\
    &=\inf_{v \in L^2([0,T];H)}\int_0^{\hat{\tau}^{v,0}} \left(\frac{1}{2}|v(s)|_H^2 - |u(\hat{X}^{v,0}(s))|_H^2 \right)ds
  \end{align*}
  with probability one.
\end{Lemma}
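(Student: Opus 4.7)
The plan is to combine the large deviations principle of Lemma \ref{lem:LDP} with weak lower semicontinuity of the cost functional, and then to exploit the characterization of $G_T(u)$ as the deterministic infimum to promote the resulting estimate to an almost sure equality. By the Skorokhod representation theorem applied on the Polish space $S_N \times C([0,T];H)$ (invoking Lemmas \ref{lem:v-tight} and \ref{lem:X-conv}), I may assume the chosen subsequence is realized on a common probability space along which $v^{\e_n}\to v^0$ weakly in $L^2([0,T];H)$ and $\hat X^{v^{\e_n},\e_n}\to\hat X^{v^0,0}$ in $C([0,T];H)$, both almost surely.

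Chaining the near-optimality condition \eqref{eq:ve-assump} with Lemma \ref{lem:LDP} and discarding the $\e_n^2$ error gives
$$G_T(u)=\lim_{n\to\infty}-\e_n\log\mathcal{Q}^{\e_n}(0,0,u^{\e_n})\geq\liminf_{n\to\infty}\E\!\int_0^{\hat\tau^{v^{\e_n},\e_n}}\!\!\left(\tfrac{1}{2}|v^{\e_n}(s)|_H^2-|u^{\e_n}(\hat X^{v^{\e_n},\e_n}(s))|_H^2\right)ds.$$
Since $|u^{\e_n}|_H$ is uniformly bounded on $\{|x|_H\leq L\}$, the integrand is uniformly bounded below and Fatou's lemma lets me push the $\liminf$ past the expectation. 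Pathwise, after extending $v^{\e_n}$ by zero past $\hat\tau^{v^{\e_n},\e_n}$, classical weak lower semicontinuity of the $L^2$ norm yields $\liminf_n\tfrac{1}{2}\int_0^T|v^{\e_n}|_H^2ds\geq\tfrac{1}{2}\int_0^T|v^0|_H^2ds$; for the penalty term, uniform convergence $u^{\e_n}\to u$ on $\{|x|_H\leq L\}$ combined with the uniform path convergence forces $|u^{\e_n}(\hat X^{v^{\e_n},\e_n}(\cdot))|_H^2\to|u(\hat X^{v^0,0}(\cdot))|_H^2$ uniformly on $[0,T]$, and dominated convergence handles the integral after extracting a further subsequence along which $\hat\tau^{v^{\e_n},\e_n}\to\tau_\ast\in[0,T]$.

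The principal obstacle is controlling the variable upper integration limit. Uniform path convergence forces $|\hat X^{v^0,0}(\tau_\ast)|_H=L$, hence $\hat\tau^{v^0,0}\leq\tau_\ast$; the additional identification $\tau_\ast=\hat\tau^{v^0,0}$ almost surely I would obtain by a minimality argument, noting that if $\hat\tau^{v^0,0}<\tau_\ast$ on an event of positive measure then the truncated control $\tilde v=v^0\mathbf{1}_{[0,\hat\tau^{v^0,0}]}$ is admissible and any gap would contradict the $\e_n^2$-near-optimality in \eqref{eq:ve-assump} via a direct comparison of costs. Once this identification is in place, the two pathwise ingredients assemble into
$$\E\!\int_0^{\hat\tau^{v^0,0}}\!\!\left(\tfrac{1}{2}|v^0(s)|_H^2-|u(\hat X^{v^0,0}(s))|_H^2\right)ds\leq G_T(u),$$
while Lemma \ref{lem:LDP} identifies $G_T(u)$ as the infimum of exactly this functional over admissible deterministic controls, so the pointwise reverse inequality $\int_0^{\hat\tau^{v^0,0}}(\cdots)ds\geq G_T(u)$ holds almost surely by definition of the infimum. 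Since a random variable that is almost surely at least the constant $G_T(u)$ and whose expectation is at most $G_T(u)$ must equal $G_T(u)$ almost surely, the claimed equality follows.
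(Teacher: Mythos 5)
Your overall architecture matches the paper's proof exactly: the upper bound comes from chaining \eqref{eq:ve-assump} with Lemma \ref{lem:LDP}, the lower bound from lower semicontinuity of the cost functional along the convergent subsequence (via Lemma \ref{lem:v-tight} and Lemma \ref{lem:X-conv}), and the a.s.\ conclusion from the fact that the pathwise deterministic cost is always $\geq G_T(u)$ by definition of the infimum while its expectation is $\leq G_T(u)$. The paper states the semicontinuity inequality in one line; you unpack it with Skorokhod representation, Fatou, weak lower semicontinuity of the $L^2$ norm, and uniform convergence of $u^{\e_n}\circ\hat X^{v^{\e_n},\e_n}$. Those details are fine.

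The one point where your argument is not airtight is the identification $\tau_*=\hat\tau^{v^0,0}$. Your ``minimality argument'' does not actually close the gap. The truncated control $\tilde v=v^0\mathbf 1_{[0,\hat\tau^{v^0,0}]}$ has deterministic cost exactly $I(v^0)=\int_0^{\hat\tau^{v^0,0}}(\cdots)\,ds$, which is $\geq G_T(u)$ simply because $G_T(u)$ is the infimum; comparing $\tilde v$ against the $\e^2$-near-optimality therefore produces no contradiction. More to the point, the pathwise limit you actually obtain from lower semicontinuity is a bound on $\int_0^{\tau_*}(\cdots)\,ds$, not on $\int_0^{\hat\tau^{v^0,0}}(\cdots)\,ds$, and because the running cost $\tfrac12|v|_H^2-|u(\hat X)|_H^2$ has no definite sign, the contribution over $[\hat\tau^{v^0,0},\tau_*]$ can be negative; hence $\E\int_0^{\tau_*}(\cdots)\leq G_T(u)$ does not by itself yield $\E\int_0^{\hat\tau^{v^0,0}}(\cdots)\leq G_T(u)$. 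One needs an independent reason why the limiting trajectory does not graze the sphere before $\tau_*$ (e.g.\ by ruling out that $|\hat X^{v^0,0}|_H=L$ on a nondegenerate set of times, or by first observing that any weak limit with $\tau_*$ strictly later than the first exit could not have expected truncated cost matching $G_T(u)$). To be fair, the paper's own proof asserts the semicontinuity inequality with $\hat\tau^{v^0,0}$ in the upper limit without addressing this subtlety at all, so you are actually being more careful than the source; but the step you flag as ``I would obtain by a minimality argument'' is precisely the step that still needs work.
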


\begin{proof}
  By \eqref{eq:ve-assump} and Lemma \ref{lem:LDP},
  \[\limsup_{n \to +\infty} \E\left[ \int_0^{\hat{\tau}^{v^{\e_n},\e_n}} \left(\frac{1}{2}|v^{\e_n}(s)|_H^2 - |u(\hat{X}^{v^{\e_n},\e_n}(s))|_H^2 \right)ds\right] \leq G_T(u). \]
  On the other hand, by the fact that $v^{\e_n} \to v$ in distribution in $S_N$ and by Lemma \ref{lem:X-conv},
  \begin{align*}
    &\liminf_{n \to +\infty} \E\left[ \int_0^{\hat{\tau}^{v^{\e_n},\e_n}} \left(\frac{1}{2}|v^{\e_n}(s)|_H^2 - |u(\hat{X}^{v^{\e_n},\e_n}(s))|_H^2 \right)ds\right]\\
    &\geq \E\left[\int_0^{\hat{\tau}^{v^0,0}} \left(\frac{1}{2}|v^0(s)|_H^2 - |u(\hat{X}^{v^0,0}(s))|_H^2\right)ds\right]
  \end{align*}
  Consequently,
  \begin{align*}
    &\E\left[\int_0^{\hat{\tau}^{v^0,0}} \left(\frac{1}{2}|v^0(s)|_H^2 - |u(\hat{X}^{v^0,0}(s))|_H^2\right)ds\right]\\
    &\leq \inf_{v \in L^2([0,T];H)}\E\int_0^{\hat{\tau}^{v,0}} \left(\frac{1}{2}|v(s)|_H^2 - |u(\hat{X}^{v,0}(s))|_H^2 \right)ds.
  \end{align*}
  But since the right-hand side is the infimum, the limit $v^0$ must attain the infimum with probability 1.
\end{proof}

\begin{Corollary}
  Assume $T>T_0$. Let $v^0$ be as in Lemma \ref{lem:conv-to-determ-min}, then $\hat{\tau}^{v^0,0} = T$ and $\left<\hat{X}^{v,0}(\hat{\tau}^{v,0}), e_1\right>_H^2 = L^2$
\end{Corollary}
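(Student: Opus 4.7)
The plan is to combine Lemma \ref{lem:conv-to-determ-min} with the explicit characterization of the deterministic minimizer from Section \ref{sec:calcofvar}. By Lemma \ref{lem:conv-to-determ-min}, the random control $v^0$ almost surely attains the infimum of the deterministic cost in \eqref{eq:determ-control}. Under the change of variables $v(t) = B^{-1}\bigl(\dot{y}(t) - Ay(t) + Bu(y(t))\bigr)$ with $y = \hat{X}^{v,0}$, carried out in the proof of the lemma preceding Lemma \ref{lem:I-min-k-geq-2}, this variational problem is precisely the one studied in Section \ref{sec:calcofvar}, with $y^*=\hat X^{v^0,0}$ almost surely minimizing the functional $I$ over admissible trajectories starting at $0$ and reaching the sphere of radius $L$ by time $T$.

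Next, I would apply Theorem \ref{thm:min-traj-e1} under Assumption \ref{assum:lambda_k-less-than} (where $T_0 = 0$), or Theorem \ref{thm:min-traj-long-time} under Assumption \ref{assum:lambda-general} (where $T > T_0$). Either theorem asserts that every minimizing trajectory $y^*$ satisfies $|y^*(t)|_H < L$ for all $t < T$, $|y^*(T)|_H = L$, and $\left<y^*(t), e_k\right>_H = 0$ for every $k \geq 2$. Applied pathwise to $y = \hat{X}^{v^0,0}$, the first two properties give $\hat{\tau}^{v^0,0} = T$ almost surely, while the third forces $\hat{X}^{v^0,0}(T) = \pm L e_1$, so that $\left<\hat{X}^{v^0,0}(\hat{\tau}^{v^0,0}), e_1\right>_H^2 = L^2$ almost surely, which is exactly the claim.

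The only delicate point I anticipate is verifying that a minimizer actually exhausts the time horizon rather than exiting earlier. This monotonicity is already embedded in Section \ref{sec:calcofvar}: each term in the series \eqref{eq:explicit-I} is strictly decreasing in $T$, so stretching the trajectory across the whole interval strictly reduces the cost, and hence the infimum is attained at exit time $T$. Once this observation is in hand, the corollary follows without further computation, with all of the substantive work having been done in Lemma \ref{lem:conv-to-determ-min} and in the variational analysis of Section \ref{sec:calcofvar}.
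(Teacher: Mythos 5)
Your proposal is correct and follows the paper's own (very terse) argument: the paper simply cites Lemma \ref{lem:conv-to-determ-min} together with Theorem \ref{thm:min-traj-e1} or Theorem \ref{thm:min-traj-long-time} as immediately implying the corollary, and you have spelled out exactly that chain of reasoning. Your additional observation about the cost in \eqref{eq:explicit-I} being decreasing in $T$ is a harmless elaboration on why the minimizer exits precisely at time $T$, a fact already built into the statements of the two cited theorems.
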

This is an immediate consequence of Lemma \ref{lem:conv-to-determ-min} and Theorem \ref{thm:min-traj-e1} or Theorem \ref{thm:min-traj-long-time} which say that the minimizing trajectory only points in the $e_1$ direction and that it exits at time $T$.

\begin{Lemma} \label{lem:endpoint-conv}
  Let $v^{\e_n}$ and $v^0$ be as in Lemma \ref{lem:conv-to-determ-min}. Then
  \[\lim_{\e\rightarrow 0}\E \left<\hat{X}^{v^{\e_n},\e_n}(\hat{\tau}^{v^{\e_n},\e_n}),e_1 \right>^2 = L^2.\]
\end{Lemma}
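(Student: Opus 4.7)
The plan is to combine the weak-convergence framework of Lemmas \ref{lem:v-tight}--\ref{lem:conv-to-determ-min} with a stopping-time argument. The strategy has three parts: first I pass to almost-sure uniform convergence on a Skorokhod representation space; then I upgrade this to convergence of the exit times; and finally I invoke the bounded convergence theorem.

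By Lemma \ref{lem:v-tight}, every subsequence of $\{v^{\e_n}\}$ has a further subsequence (still denoted $v^{\e_n}$) converging in distribution in $S_N$ to some $v^0$, and by Lemma \ref{lem:conv-to-determ-min} the limit $v^0$ is almost surely a minimizer of the deterministic variational problem. Lemma \ref{lem:X-conv} then yields $\hat{X}^{v^{\e_n},\e_n} \to \hat{X}^{v^0,0}$ in distribution in $C([0,T];H)$. Since $C([0,T];H)$ is Polish, the Skorokhod representation theorem lets me assume without loss of generality that this convergence is almost sure and uniform on $[0,T]$.

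The main step, and the principal obstacle, is showing $\hat{\tau}^{v^{\e_n},\e_n} \to T$ almost surely; the exit-time functional is not continuous in general, so additional structure is required. Here the strict inequality from Theorem \ref{thm:min-traj-e1} (or Theorem \ref{thm:min-traj-long-time} under Assumption \ref{assum:lambda-general} with $T>T_0$) is decisive: the limiting trajectory satisfies $|\hat{X}^{v^0,0}(t)|_H < L$ for every $t<T$. Since $t\mapsto|\hat{X}^{v^0,0}(t)|_H$ is continuous, for any $\delta\in(0,T)$ the supremum $\sup_{0\leq t\leq T-\delta}|\hat{X}^{v^0,0}(t)|_H$ is strictly less than $L$. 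Uniform convergence then gives $\sup_{0\leq t\leq T-\delta}|\hat{X}^{v^{\e_n},\e_n}(t)|_H < L$ for all $n$ sufficiently large, so $\hat{\tau}^{v^{\e_n},\e_n}>T-\delta$. Combined with the constraint $\hat{\tau}^{v^{\e_n},\e_n}\leq T$ that is built into $\mathcal{A}$, this yields $\hat{\tau}^{v^{\e_n},\e_n}\to T$ almost surely.

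Uniform convergence of the paths together with $\hat{\tau}^{v^{\e_n},\e_n}\to T$ implies $\hat{X}^{v^{\e_n},\e_n}(\hat{\tau}^{v^{\e_n},\e_n}) \to \hat{X}^{v^0,0}(T)$ almost surely, so in particular the inner products with $e_1$ converge. By the Corollary following Lemma \ref{lem:conv-to-determ-min}, $\left<\hat{X}^{v^0,0}(T),e_1\right>_H^2 = L^2$, while the exit condition gives the uniform bound $\left<\hat{X}^{v^{\e_n},\e_n}(\hat{\tau}^{v^{\e_n},\e_n}),e_1\right>_H^2 \leq |\hat{X}^{v^{\e_n},\e_n}(\hat{\tau}^{v^{\e_n},\e_n})|_H^2 = L^2$. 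Bounded convergence therefore yields
\[\E\left<\hat{X}^{v^{\e_n},\e_n}(\hat{\tau}^{v^{\e_n},\e_n}),e_1\right>_H^2 \to L^2\]
along the subsequence. Since every subsequence of $\{\e_n\}$ admits a further subsequence for which this argument produces the same limit $L^2$, the full sequence converges, proving the lemma.
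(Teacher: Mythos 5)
Your proof is correct and follows essentially the same route as the paper's: the key ingredient in both is the strict inequality $|\hat{X}^{v^0,0}(t)|_H < L$ for $t<T$ from Theorem~\ref{thm:min-traj-e1} (or Theorem~\ref{thm:min-traj-long-time}), which forces $\hat{\tau}^{v^{\e_n},\e_n}\to T$ and hence convergence of the endpoints to $\pm Le_1$. The only technical difference is that you pass to a Skorokhod representation and work with almost-sure uniform convergence plus bounded convergence, whereas the paper argues directly with convergence in distribution (using that the set $\{\sup_{t\leq T-\eta}|\cdot|_H<L\}$ has a null boundary under the limit law); your version is marginally more explicit about the composition with the random exit time but is not a different approach.
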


\begin{proof}
  First we show that $\hat{\tau}^{v^{\e_n},\e_n}$ converges to $T$ in probability. Let $\eta>0$ and notice that
  \[\Pro(\hat{\tau}^{v^{\e_n},\e_n}>T-\eta) = \Pro\left(\sup_{0\leq t \leq T-\eta}|\hat{X}^{v^{\e_n},\e}(t)|_H<L \right).\]
  Then because $\hat{X}^{v^{\e_n},\e_n} \to \hat{X}^{v,0}$ in distribution and the $\sup$ is continuous in that metric,
  \[\lim_{n \to +\infty} \Pro(\hat{\tau}^{v^{\e_n},\e_n}>T-\eta) = \Pro\left(\sup_{0\leq t \leq T-\eta}|\hat{X}^{v^0,0}(t)|_H<L \right) = 1.  \]
  The above formula is a consequence of Theorem \ref{thm:min-traj-e1} or Theorem \ref{thm:min-traj-long-time}, which say that the minimum trajectory satisfies $|\hat{X}^{v^0,0}(t)|_H < L$ for all $t< T$.
  Now because $\hat{\tau}^{v^{\e_n},\e_n}$ converges to $T$  in probability and $\hat{X}^{v^{\e_n},\e} \to \hat{X}^{v^0,0}$ in distribution, it follows that $\hat{X}^{v^{\e_n},\e_n}(\hat{\tau}^{v^{\e_n},\e_n}) \to \hat{X}^{v^0,0}(T) = \pm L e_1$ in distribution.
  The result follows.
\end{proof}

Now we can prove the main theorem of this section.
\begin{proof}[Proof of Theorem \ref{thm:conv-of-endpoint}]
 Let $v^\e$ satisfy \eqref{eq:ve-assump}. Let $\e_n\to0$ be any subsequence. Then by Lemma \ref{lem:v-tight}, there is a further subsequence (relabeled as $\e_n$) for which $v^{\e_n} \to v^0$ in distribution. By Lemma \ref{lem:conv-to-determ-min} $v^0$ is concentrated on the minimizing controls of the deterministic system. By Lemma \ref{lem:endpoint-conv},
 \[\lim_{n \to \infty} \E \left<\hat{X}^{v^{\e_n},\e_n}(\hat{\tau}^{v^{\e_n},\e_n}),e_1 \right>^2 = L^2.\]
\end{proof}

\section{Analysis of importance sampling}\label{S:IS_schemes}

The goal of this section is to discuss construction and theoretical performance of concrete importance sampling schemes. As in the previous section, we assume that either Assumption \ref{assum:lambda_k-less-than} or Assumption \ref{assum:lambda-general} holds. If Assumption \ref{assum:lambda_k-less-than} holds we fix any time horizon $T>0$. If Assumption \ref{assum:lambda-general} holds, then we fix $T>T_0$ where $T_0$ is as in Theorem \ref{thm:min-traj-long-time} and thus $T$ needs to be large enough. This is not a problem for us as we are indeed interested in developing schemes that are stable for large $T$.

  For functions $U(t,x)$ and $Z(t,x)$ let us define the operator
\begin{align*}
\mathcal{G}^{\e}[Z,U](t,x)&=\mathcal{G}^{\e}[Z](t,x)-\frac{1}{2}\left|B^{\star}\left(D_{x}Z(t,x)-D_{x}U(t,x)\right)\right|^{2}_{H}
\end{align*}
where $\mathcal{G}^{\e}[Z](t,x)$ is defined in (\ref{eq:Q-eps}). Then, by \cite[Lemma A.1]{dsz-2014}, we get for $ u^\e(t,x)=-B^{\star} D_{x}U^\e(t,x)$ the non-asymptotic bound
\begin{align} \label{eq:Q-eps-lower-bound2}
-\e \log(\mathcal{Q}^{\e}(0,x,u^\e)) &\geq  \inf_{v \in \mathcal{A}}\Bigg(2 Z(0,x) -2 \E Z(\hat{\tau}^{v,\e}, \hat{X}^{v,\e}(\hat{\tau}^{v,\e})) \nonumber\\
&\qquad + 2 \E \int_0^{\hat{\tau}^{v,\e}}  \mathcal{G}^\e[Z,U^\e](s,\hat{X}^{v,\e}(s)) ds\Bigg).
 \end{align}
where $\hat{X}^{v,\e}$ satisfies (\ref{Eq:HatXprocess}) and $\mathcal{A}$ is the set of adapted $L^2([0,T];H)$ controls for which $\hat{\tau}^{v,\e}\leq T$. The function $u^\e(t,x)=-B^{\star} D_{x}U^\e(t,x)$ is used for the implementation of the scheme, whereas the function $Z(t,x)$ is used for the analysis of the scheme. The bound (\ref{eq:Q-eps-lower-bound2}) holds for any $U^\e$ and $Z$, but in the analysis of the specific schemes considered below we will make specific choices for $U$ and $Z$, also linking them together.

Our goal is to provide implementable importance sampling schemes for which $\mathcal{G}^{\e}[Z,U^\e](t,x)\geq 0$ for all $(t,x)\times[0,T]\times H$ or at least
\[
 2 \E \int_0^{\hat{\tau}^\e}  \mathcal{G}^\e[Z,U^\e](s,\hat{X}^{v,\e}(s)) ds>-Cf(\e).
 \]
  for some function $f(\e)$ such that $\lim_{\e\rightarrow 0}f(\e)=0$ uniformly with respect to $T<\infty$. As it is also discussed in the finite dimensional case of \cite{dsz-2014}, controlling the term $\mathcal{G}^{\e}[Z,U^\e](t,x)$ is vital when it comes to assessing the performance of a given importance sampling scheme. This is no different in the infinite dimensional case and as we also mentioned in  Section \ref{S:IS} if for example we choose $Z(t,x)=U(t,x)= \frac{\alpha_1}{\lambda_k^2}L^{2}-|B^{-1}(-A)^{\frac{1}{2}}x|_{H}^{2}$, then  $\mathcal{G}^\e[Z,U] = \e \text{Tr}A = -\infty$, which implies that in this case we have no control on the performance of the corresponding importance sampling scheme.

Let us set $Z(t,x)=(1-\eta)U(t,x)$ with $\eta\in(0,1)$. Then, straightforward algebra gives
\begin{align}
\mathcal{G}^{\e}[Z,U](t,x)&=\mathcal{G}^{\e}[Z](t,x)-\frac{\eta^{2}}{2}\left|B^{\star} D_{x}U(t,x)\right|^{2}_{H}\nonumber\\
&\geq (1-\eta)\mathcal{G}^{\e}[U](t,x)-\frac{\eta-2\eta^{2}}{2}\left|B^{\star}D_{x}U(t,x)\right|^{2}_{H}\label{Eq:Goperator}
\end{align}

We construct importance sampling schemes based on constructions that exploit different properties of the dynamical system near the attractor and away from it and then combine them in an appropriate smooth way. In particular, for $k_{1},k_{2}\in\mathbb{N}$, if $F_{i}(t,x), i=1,\cdots, k_{1}$ are good change of measure in parts of the phase space away form the rest point while $F_{j}(t,x), j=k_{1}+1,\cdots,k_{2}$ are good changes of measure in parts of the phase space within the neighborhood of the rest point, then we consider $0<\delta\ll 1$ (which is to be chosen) and we define the exponential mollification of $F_{i}(t,x), i=1,\cdots,k_{2}$ (similarly to \cite{dsz-2014})
\begin{align*}
\bar{U}^{\delta}(t,x)&=-\delta\log\left(\sum_{i=1}^{k_{1}}e^{-\frac{F_{i}(t,x)}{\delta}}+\sum_{j=k_{1}+1}^{k_{2}} e^{-\frac{F_{j}(t,x)}{\delta}}\right)
\end{align*}
We notice that  $\lim_{\delta\downarrow 0}\bar{U}^{\delta}(t,x)=F_{1}(t,x)\wedge F_{2}(t,x)\wedge\cdots\wedge F_{k_{2}}(t,x)$. We also notice that the Fr\'{e}chet derivative of the exponential mollification $\bar{U}^{\delta}(t,x)$ is
\begin{align*}
D_{x}\bar{U}^{\delta}(t,x)&=\sum_{i=1}^{k_{2}}\rho_{i}(t,x)D_{x}F_{i}(t,x), \text{ where } \rho_{i}(t,x)=\frac{e^{-\frac{F_{i}(t,x)}{\delta}}}{\sum_{i=1}^{k_{2}}e^{-\frac{F_{i}(t,x)}{\delta}}}
\end{align*}

As it will be discussed in the sequel we choose the functions $F_{i}$ such that the corresponding weight function $\rho_{i}(t,x)\approx 0$ away from the part of the phase space where $F_{i}$ is intended to dominate, whereas $\rho_{i}(t,x)\approx 1$ within the area of the phase space where $F_{i}$ is intended to dominate. In particular the exponential mollification allows for a smooth transition between the regions where $F_{i}$ for $i=1,\cdots,k_{2}$ are supposed to be inducing the desirable change of measure.

Now that we have described the general construction, let us go into specifics for the problem at hand. The results of Sections \ref{S:CalculusOfVariationsLinearProblem} and \ref{S:WeakCompactness} motivate considering importance sampling schemes that only act in the $e_{1}$ direction. By Theorem \ref{thm:min-traj-e1} or Theorem \ref{thm:min-traj-long-time} one expects that in the linear case such schemes work well at least when Assumption \ref{assum:lambda_k-less-than} or Assumption \ref{assum:lambda-general} holds. So, let us consider a change of measure induced by a function $U(t,x)$ such that the control  $u(t,x)=-B^{\star} D_{x}U^\e(t,x)$ only acts in the $e_1$ direction. That is
\[u^\e(t,x) = u^\e(t,\left<x,e_1\right>_H e_1) \text{ and } \left<u^\e(t,x), e_k\right>_H = 0 \text{ for } k\geq 2.\]

We will discuss two different ways to choose the functions $F_{i},i=1,\cdots,k_{2}$ which then form $\bar{U}^{\delta}(t,x)$ that is the basis for defining $U^\e(t,x)$ . The first way is motivated by the one-dimensional construction of \cite{dsz-2014}. The second way is similar to the first one in spirt, but simpler to apply and with comparable performance. Motivated by the results of Sections \ref{S:CalculusOfVariationsLinearProblem} and \ref{S:WeakCompactness}, we choose for both constructions $k_{1}=1$ and
\begin{align*}
F_{1}(t,x)=\frac{\alpha_{1}}{\lambda_1^2}\left(L^{2}-\left<x,e_{1}\right>^{2}_{H}\right)
\end{align*}
which turns out to induce a simple but provably good change of measure away from the rest point. The two different ways that we present differ on what one does in the neighborhood of the rest point, i.e. in the neighborhood of the attractor. Also without loss of generality we set the initial point to be $x=0$.

Before proceeding with the analysis for each of the
schemes, we give the definition of exponential negligibility
\begin{Definition}
\label{D:ExponentialNegligibility} A term is called exponentially negligible if
it is bounded above in absolute value by a quantity of the form $\e
c_{1}e^{-\frac{c_{2}}{\e}}$, where $c_{1}<\infty$, $c_{2}>0$.
\end{Definition}

\textbf{Scheme 1:} Motivated by \cite{dsz-2014}, let us consider the minimization problem
\begin{align*}
V(t,x)&=\inf_{u\in L^{2}([0,T];H): dX(t)=[AX(t)+Bu(t)]dt, X(t)=x\in H, X(T)=z\in H}\left\{\frac{1}{2}\int_{t}^{T}|u(s)|^{2}_{H}ds \right\}
\end{align*}
where $|z|^{2}_{H}\leq L^{2}$. One can solve this variational problem in closed form and get
\begin{align}
V(t,x)&=\frac{1}{2}\left| B^{-1}(-2A)^{1/2}(I-e^{2A(t-T)})^{-1/2}(z-x e^{A(t-T)})\right|^{2}_{H}
\end{align}

Due to the singularities appearing at $t=T$ we next introduce a regularization parameter $M\gg 1$ and consider
\begin{align}
V_{M}(t,x)&=\frac{1}{2}\left| B^{-1}(-2A)^{1/2}((1+M^{-1})I-e^{2A(t-T)})^{-1/2}(z-x e^{A(t-T)})\right|^{2}_{H}
\end{align}

We choose $k_{2}=2$. Projecting  $V_{M}$  down to the $e_{1}$ direction and controlling for the possibility $|\left<z,e_{1}\right>_H|^{2}<L^{2}$, this leads to the following definition for $F_{2}, F_{3}$, which is analogous to the corresponding definitions of \cite{dsz-2014} for the one-dimensional case,
\begin{align}
F_{2}(t,x)&=\frac{\alpha_{1}}{\lambda_1^2\left(\frac{1}{M}+1-e^{2\alpha_{1}(t-T)}\right)}\left(\left<z,e_{1}\right>^{2}_{H}+e^{2\alpha_{1}(t-T)}\left<x,e_{1}\right>^{2}_{H}\right.\nonumber\\
&\qquad\left.-2e^{\alpha_{1}(t-T)}\left<z,e_{1}\right>_{H}\left<x,e_{1}\right>_{H}\right)+\frac{\alpha_{1}}{\lambda_1^2}(L^{2}-\left<z,e_{1}\right>^{2}_{H})\label{Eq:F2_scheme1}
\end{align}
and
\begin{align}
F_{3}(t,x)&=\frac{\alpha_{1}}{\lambda_1^2\left(\frac{1}{M}+1-e^{2\alpha_{1}(t-T)}\right)}\left(\left<z,e_{1}\right>^{2}_{H}+e^{2\alpha_{1}(t-T)}\left<x,e_{1}\right>^{2}_{H}\right.\nonumber\\
&\qquad\left.+2e^{\alpha_{1}(t-T)}\left<z,e_{1}\right>_{H}\left<x,e_{1}\right>_{H}\right)+\frac{\alpha_{1}}{\lambda_1^2}(L^{2}-\left<z,e_{1}\right>^{2}_{H})\label{Eq:F3_scheme1}
\end{align}

As in \cite{dsz-2014} due to the singularities at $t=T$ this scheme needs one more mollification parameter denoted by $t^{*}$ and we finally set $u^\e(t,x)=-D_{x}U^{\delta}(t,x)$ where
\begin{align}
U^{\delta}(t,x)&=\left\{
\begin{array}
[c]{cc}%
F_{1}(x), & t>T-t^{\ast}\\
\bar{U}^{\delta}(t,x), & t\leq T-t^{\ast}%
\end{array}
\right.  , \label{Eq:ControlDesignLinear}%
\end{align}

Then we can establish the following theorem,  whose proof is omitted as it is exactly analogous to that of Theorem 4.7 in \cite{dsz-2014}. We only remark that the statement $\lim_{\e\downarrow 0}\E Z(\hat{\tau}^{v,\e}, \hat{X}^{v,\e}(\hat{\tau}^{v,\e}))=0$ that is needed in the infinite dimensional case that we consider in this paper is a direct consequence of Theorem \ref{thm:conv-of-endpoint} and the definition of the $Z$ function.
\begin{Theorem}
\label{T:Scheme1} Assume that $\delta=2\e,\eta\in(\e
/(\e+\alpha_{1}L^{2}),1/4)$. Let $u^\e(t,x)=-B^{\star} D_{x}U^{\delta}(t,x)$ where $U^{\delta}$ is defined in (\ref{Eq:ControlDesignLinear}). Then up to an
exponentially negligible term in $\e$, we have for $\e$ sufficiently small
\begin{align}
-\e\log \mathcal{Q}^{\e}(0,0;{u}^\e)&\geq 2I_{1}(\e
,\eta,T,|\left<z,e_1\right>_H|^{2}, M)1_{\left\{  T\geq t^{\ast}\right\}  }+2I_{2}(\e
,T)1_{\left\{  T<t^{\ast}\right\} }\nonumber\\
&\quad-\E Z(\hat{\tau}^\e, \hat{X}^\e(\hat{\tau}^\e)),\nonumber
\end{align}
where
\[
I_{1}(\e,\eta,T,|\left<z,e_1\right>_H|^{2},M)=(1-\eta)U^{\delta}(0,0)+\e R(\eta,T,|\left<z,e_1\right>_H|^{2},M).
\]
\[
I_{2}(\varepsilon,T)=\frac{\alpha_{1}}{\lambda_1^2}(L^{2}-T\e)
\]
Here $R(\eta,T,|\left<z,e_1\right>_H|^{2},M )$\footnote{For the exact form of  $R(\eta,T,|\left<z,e_1\right>_H|^{2},M )$ we refer the interested reader to Theorem 4.7 in \cite{dsz-2014}. We do not report it here as the formula is long and not useful for our purposes.} is a negative function that is uniformly bounded in all of its arguments,
\[
U^{\delta}(0,0)\geq\frac{\alpha_{1}}{\lambda_1^2\left(\frac{1}{M}+1-e^{-2\alpha_{1}T}\right)}\left<z,e_{1}\right>^{2}+\frac{\alpha_{1}}{\lambda_1^2}(L^{2}-\left<z,e_{1}\right>^{2})  -\delta\log3
\]
and $\lim_{\e\downarrow 0}\E Z(\hat{\tau}^\e, \hat{X}^\e(\hat{\tau}^\e))=0$.
\end{Theorem}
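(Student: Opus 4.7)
The plan is to mimic the one-dimensional argument of \cite[Theorem 4.7]{dsz-2014} by exploiting the fact that the feedback control $u^\e(t,x)=-B^\star D_xU^\delta(t,x)$ only acts in the $e_1$ direction. This collapses every trace computation to the single mode $e_1$ and so neutralizes the infinite dimensional obstruction highlighted in Section \ref{S:IS}. The starting point is the non-asymptotic lower bound \eqref{eq:Q-eps-lower-bound2} applied with $Z(t,x)=(1-\eta)U^\delta(t,x)$; combining this with \eqref{Eq:Goperator} reduces the entire problem to estimating
\[
(1-\eta)\mathcal{G}^\e[U^\delta](t,x)-\frac{\eta-2\eta^2}{2}\left|B^\star D_xU^\delta(t,x)\right|_H^2
\]
from below, on $[0,T]\times H$, by a term of order $\e$ uniformly in $T$, modulo exponentially negligible contributions in the sense of Definition \ref{D:ExponentialNegligibility}.

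I would then split the argument along the indicator $\mathbbm{1}_{\{T<t^*\}}$ vs.\ $\mathbbm{1}_{\{T\geq t^*\}}$. On $\{T<t^*\}$, $U^\delta\equiv F_1$, and a direct calculation using $\langle Ax,e_1\rangle_H=-\alpha_1\langle x,e_1\rangle_H$ shows that $\mathbb{H}(x,D_xF_1)=0$ while $\mathrm{Tr}(BB^\star D^2_xF_1)=-2\alpha_1$ (the key finiteness: only the $e_1$ mode contributes). Plugging this into $\mathcal{G}^\e[F_1]$ yields $\mathcal{G}^\e[F_1]=-\e\alpha_1$, so integrating over $[0,T]$ and using $F_1(0)=\tfrac{\alpha_1}{\lambda_1^2}L^2$ gives the term $2I_2(\e,T)=\tfrac{2\alpha_1}{\lambda_1^2}(L^2-T\e)$ once the $\eta$ dependence is tracked and absorbed (the constraint $\eta\in(\e/(\e+\alpha_1L^2),1/4)$ is used here to ensure the quadratic $|B^\star D_xF_1|_H^2$ term is dominated). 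On $\{T\geq t^*\}$, $U^\delta=\bar U^\delta$ on $[0,T-t^*]$ and $U^\delta=F_1$ on $(T-t^*,T]$; one handles these intervals separately, using the $F_1$ analysis above on the second interval, and using the exponential mollification identity $D_x\bar U^\delta=\sum_i\rho_iD_xF_i$ together with the weighted second-derivative formula on the first.

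The main obstacle, and what must be imported from \cite{dsz-2014}, is the control of $\mathcal{G}^\e[\bar U^\delta]$ on $[0,T-t^*]$. The exponential mollification produces
\[
D_x^2\bar U^\delta=\sum_{i=1}^{3}\rho_iD_x^2F_i-\frac{1}{\delta}\sum_{i=1}^{3}\rho_i\bigl(D_xF_i-D_x\bar U^\delta\bigr)\otimes\bigl(D_xF_i-D_x\bar U^\delta\bigr),
\]
so upon taking $\Tr(BB^\star\cdot)$ two things must be verified: first, that each $\mathcal{G}^\e[F_i]$, $i=1,2,3$, equals $-\e\alpha_1$ or is nonnegative (which follows because $F_2$ and $F_3$ were constructed as the projected value functions for the regularized deterministic variational problem and hence satisfy the corresponding HJB equation in the $e_1$ mode up to the regularization parameter $M$); and second, that the $1/\delta$ covariance-type correction produces only an $O(\e)$ contribution once the calibration $\delta=2\e$ is used, essentially because $B^\star(D_xF_i-D_x\bar U^\delta)$ lives in the $e_1$ direction alone, which again finite-dimensionalizes the trace. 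Tracking these contributions yields the constant $\e R(\eta,T,|\langle z,e_1\rangle_H|^2,M)$, uniformly bounded in all its arguments exactly as in \cite[Theorem 4.7]{dsz-2014}.

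Finally, the boundary term $\E Z(\hat\tau^{v,\e},\hat X^{v,\e}(\hat\tau^{v,\e}))=(1-\eta)\E U^\delta(\hat\tau^{v,\e},\hat X^{v,\e}(\hat\tau^{v,\e}))$ is controlled by the infinite dimensional ingredient of this paper: Theorem \ref{thm:conv-of-endpoint} ensures $\E\langle\hat X^{v^\e,\e}(\hat\tau^{v^\e,\e}),e_1\rangle_H^2\to L^2$, and $F_1$ (hence $\bar U^\delta$, up to $\delta\log 3$) vanishes at $\pm Le_1$, so $\lim_{\e\downarrow 0}\E Z(\hat\tau^\e,\hat X^\e(\hat\tau^\e))=0$. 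Concatenating the pieces and recognising $U^\delta(0,0)$ from the formulas \eqref{Eq:F2_scheme1}--\eqref{Eq:F3_scheme1} delivers the asserted inequality. The single genuinely new difficulty versus \cite{dsz-2014} is this last step: all remaining bookkeeping is verbatim the one-dimensional calculation carried out mode-by-mode in $e_1$.
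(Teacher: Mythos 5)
Your proposal is correct and takes essentially the same approach as the paper, which simply defers to the one-dimensional argument of \cite[Theorem 4.7]{dsz-2014} and notes that the only genuinely new ingredient is the control of the boundary term $\E Z(\hat\tau^\e,\hat X^\e(\hat\tau^\e))$ via Theorem \ref{thm:conv-of-endpoint}. Your supporting computations (in particular $\mathcal{G}^\e[F_1]=-\e\alpha_1$ since only the $e_1$ mode contributes to the trace, the rank-one covariance correction in $D_x^2\bar U^\delta$, and the identification of $F_2,F_3$ as projected regularized value functions) correctly explain why the one-dimensional bookkeeping lifts mode-by-mode to this infinite-dimensional setting.
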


\begin{Remark}
As mentioned in \cite{dsz-2014} there are natural scalings under which  $\eta\rightarrow0$
and $M\rightarrow\infty$, $\delta\downarrow 0$ as $\e\rightarrow0$. We can set $\delta=2\e$, $M=\e^{-\kappa}$ with $\kappa\in(0,1)$, $t^{*}=-\frac{2\lambda_1^2}{\alpha_{1}}\log\frac{1}{M}$. Also, the value of $|\left<z,e_1\right>_H|^{2}$ is not that important as long as it is of order one and less than $L^{2}$.
If the natural scalings are used then various terms vanish as $\e \rightarrow 0$, and we obtain that
\[
\lim_{\e\downarrow 0}U^{\delta}(0,0)=\alpha_{1}L^{2}+\alpha_{1}|\left<z,e_1\right>_H|^{2}  \frac{e^{-2\alpha_{1}T}}{1-e^{-2\alpha_{1}T}}
\]
uniformly in $T$ as $\e\rightarrow0$.
\end{Remark}

\begin{Theorem} \label{thm:u_eps-to-u-scheme-1}
  Let $u^\e(t,x) = B^{\star} D_xU^{\delta}(t,x)$ where $U^{\delta}$ with $\delta=2\e$ is defined in \eqref{Eq:ControlDesignLinear} with $t^* = -\frac{2\lambda_1^2}{\alpha_1}\log\left(\frac{1}{M}\right)$, then $u^\e$ converges uniformly to $u(x) = \frac{\alpha_1}{\lambda_1} \left<x,e_1\right>_H$.
\end{Theorem}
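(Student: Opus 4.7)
The plan is to split by whether the cutoff $t^{*}$ exceeds the time horizon or not. With the prescribed choices $\delta=2\e$, $M=\e^{-\kappa}$ for $\kappa\in(0,1)$, and $t^{*}=\frac{2\lambda_1^2}{\alpha_1}\log M=\frac{2\kappa\lambda_1^2}{\alpha_1}\log(1/\e)$, we have $t^{*}\to\infty$ as $\e\to 0$. Hence, for any fixed $T>0$ there exists an $\e_0(T)>0$ such that for all $\e<\e_0(T)$ we have $t^{*}>T$ and the definition \eqref{Eq:ControlDesignLinear} collapses to $U^{\delta}(t,x)=F_1(x)=\frac{\alpha_1}{\lambda_1^2}(L^2-\langle x,e_1\rangle_H^{2})$ on all of $[0,T]$. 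A direct calculation then gives $-B^{\star}D_xF_1(x)=\frac{2\alpha_1}{\lambda_1}\langle x,e_1\rangle_H e_1$, which is precisely $u(x)$, so $u^\e\equiv u$ on $[0,T]\times\{|x|_H\le L\}$ for every $\e<\e_0(T)$, and uniform convergence holds trivially in the small-$\e$ regime.

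For the intermediate range $\e\ge\e_0(T)$ where $t^{*}\le T$, one must estimate $u^\e-u$ on the nontrivial region $[0,T-t^{*}]\times\{|x|_H\le L\}$ on which $U^{\delta}=\bar U^{\delta}$. Using $\rho_1+\rho_2+\rho_3=1$ one obtains
\begin{equation*}
u^\e(t,x)-u(x)=-B^{\star}\bigl[\rho_2(D_xF_2-D_xF_1)+\rho_3(D_xF_3-D_xF_1)\bigr],
\end{equation*}
so it suffices to bound $\rho_j\,|D_x(F_j-F_1)|_H$ for $j=2,3$. Writing $b=\langle x,e_1\rangle_H$, $a=\langle z,e_1\rangle_H$, $\mu=e^{\alpha_1(t-T)}$, direct manipulation of \eqref{Eq:F2_scheme1}--\eqref{Eq:F3_scheme1} yields an identity of the form $F_j(t,x)-F_1(x)=A_j(t)(b-b^{*}_j(t))^{2}+m_j(t)$ with $A_j(t)=\Theta(1)$, $|b^{*}_j(t)|\le L$, and $m_j(t)=O(1/M)=O(\e^{\kappa})$; the uniform boundedness of $A_j$ follows from $1-\mu^{2}\ge 1-e^{-2\alpha_1 t^{*}}\to 1$ on $[0,T-t^{*}]$.

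I would then bound $\rho_j\,|D_x(F_j-F_1)|_H=2\lambda_1 A_j|b-b^{*}_j|\rho_j$ by splitting on whether $|b-b^{*}_j|$ is small or large. On the strip $|b-b^{*}_j|\le K/\sqrt{M}$ (for a sufficiently large fixed $K$), the factor $|b-b^{*}_j|=O(\e^{\kappa/2})$ already produces the bound using $\rho_j\le 1$. Outside that strip, $F_j-F_1\ge\tfrac12 A_j(b-b^{*}_j)^{2}$, so $\rho_j\le e^{-(F_j-F_1)/\delta}\le e^{-A_j(b-b^{*}_j)^{2}/(2\delta)}$, and the Laplace-type identity $\sup_{y>0}y\,e^{-A_jy^{2}/(2\delta)}=O(\sqrt{\delta/A_j})=O(\sqrt{\e})$ supplies the needed decay. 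Combining the two subregions gives $\sup_{t,|x|_H\le L}|u^\e(t,x)-u(x)|_H=O(\e^{\min(1/2,\kappa/2)})$, which tends to $0$ as $\e\to0$.

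The main obstacle is that for finite $M$ the function $F_j-F_1$ need not be nonnegative: its minimum $m_j$ is $O(\e^{\kappa})$ negative, so the exponential weight $\rho_j$ cannot be made small on an $O(1/\sqrt{M})$-wide neighborhood of that minimum. The key observation that rescues the estimate is that on precisely this neighborhood the gradient $D_x(F_j-F_1)$ is itself $O(\e^{\kappa/2})$ small, so the product remains controlled without needing any decay from $\rho_j$. Tracking the constants through $\|B^{\star}e_1\|_H=\lambda_1$ and the uniform lower bound on $1-\mu^{2}$ then completes the argument, and an identical treatment applies to $\rho_3(D_xF_3-D_xF_1)$.
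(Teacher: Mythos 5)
Your first paragraph is exactly the paper's argument and already proves the theorem: with $M=\e^{-\kappa}$ we have $t^{*}\to\infty$, so for $\e$ below some threshold $\e_0(T)$ the definition \eqref{Eq:ControlDesignLinear} reduces to $U^{\delta}\equiv F_1$ on all of $[0,T]$ and $u^\e\equiv u$ identically, whence uniform convergence is immediate. The second and third paragraphs are superfluous and reveal a misreading of what the claim requires: convergence as $\e\to 0$ only constrains the tail regime $\e<\e_0(T)$, so there is nothing to estimate on the fixed bounded range $\e\ge\e_0(T)$, and the phrase ``one must estimate $u^\e-u$'' there is simply incorrect. The elaborate quantitative analysis of $F_j-F_1$ (the asserted normal form $A_j(t)(b-b_j^{*}(t))^2+m_j(t)$, the strip decomposition, the Laplace-type bound on $\rho_j$) adds nothing to the theorem and contains unverified claims, so it should be removed rather than left as if it were a necessary part of the argument.
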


\begin{proof}
  This is an immediate consequence of the fact that $t^*$ converges to $\infty$. Therefore, $U^{\delta} = F_1$ and $u^\e=u$ for small $\e$.
\end{proof}

\textbf{Scheme 2:} A further analysis of Scheme 1 leads to the conclusion that the role of $F_{2}$ and $F_{3}$ as defined by (\ref{Eq:F2_scheme1})-(\ref{Eq:F3_scheme1}) is to push trajectories very gently outside the area of attraction in a time dependent manner. As we will see in the analysis below and in the numerical simulation results of Section \ref{S:Numerics}, actually doing no change of measure in the neighborhood of zero leads to schemes with comparable performance, but simpler in terms of implementation. In particular, we now set $k_{2}=1$ (i.e. we now need only one function to control the behavior in the neighborhood of the rest point) and we define
\begin{align}
F^{\e}_{2}=\frac{\alpha_{1}}{\lambda_1^2}(L^{2}-\e^{\kappa}), \text{ where }\kappa\in(0,1)
\end{align}

As the analysis below will demonstrate the  term $\e^{\kappa}$ defines the size of the neighborhood  of the attractor outside of which $F_{1}$ takes over. The simulation results indicate that $\e^{\kappa}$ should be neither too large, nor too small. We set
$u^\e(x)=-D_{x}U^{\delta}(x)$ where
\begin{align}
U^{\delta}(x)=-\delta\log\left(e^{-\frac{F_{1}(x)}{\delta}}+e^{-\frac{F^{\e}_{2}}{\delta}}\right)  , \label{Eq:ControlDesignLinear2}%
\end{align}

Lemma \ref{L:BoundGScheme2} takes care of the integral term in the upper bound of the second moment of the estimator and its proof is given in Appendix \ref{A:ProofLemmaBoundGScheme2}. For $0<\eta<1$ to be chosen later, we set $Z =U^{\delta,\eta} = (1-\eta)U^{\delta}$.
\begin{Lemma}\label{L:BoundGScheme2}
Consider the function $U^{\delta}(x)$ of Scheme 2 as defined by (\ref{Eq:ControlDesignLinear2}) and for $\kappa\in(0,1)$ let us consider $\e$ sufficiently small such that $\e^{1-\kappa}\leq \frac{\alpha_{1}}{2\lambda_1^2}$. Then for all $(t,x)\in[0,T]\times H$ we have $\mathcal{G}^{\e}[U^{\delta,\eta},U^{\delta}](x)\geq 0$.
\end{Lemma}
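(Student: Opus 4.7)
The plan is to exploit the fact that $U^{\delta}$ depends on $x$ only through the first Fourier mode $s = \langle x, e_1 \rangle_H$, collapsing the infinite-dimensional computation for $\mathcal{G}^{\e}[U^{\delta,\eta}, U^{\delta}]$ to a scalar one. Writing $U^{\delta}(x) = \phi(s)$ with $\phi(s) = -\delta \log\bigl(e^{-F_1(s)/\delta} + e^{-F_2^{\e}/\delta}\bigr)$, one has $D_x U^{\delta} = \phi'(s)\,e_1$ and $D^2_x U^{\delta} = \phi''(s)\,e_1 \otimes e_1$, which gives $\langle Ax, D_x U^{\delta}\rangle_H = -\alpha_1 s\,\phi'(s)$, $|B^{\star}D_x U^{\delta}|_H^{2} = \lambda_1^{2}\,\phi'(s)^{2}$, and — crucially — $\Tr(BB^{\star} D^{2}_x U^{\delta}) = \lambda_1^{2}\,\phi''(s)$, which is finite despite the large trace of $BB^{\star}$ on the orthogonal modes. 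This is precisely the structural advantage flagged in Section \ref{S:IS}: pushing the control onto $e_1$ kills every trace contribution coming from $k \geq 2$.

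Next I would compute $\phi'$ and $\phi''$ explicitly. With $\rho_1, \rho_2$ the mollifier weights from Section \ref{S:IS_schemes}, a direct chain-rule calculation gives $\phi'(s) = -\tfrac{2\alpha_{1} s}{\lambda_{1}^{2}}\rho_{1}$ and, using $\rho_{1}'(s) = \tfrac{2\alpha_{1} s}{\delta \lambda_{1}^{2}}\rho_{1}\rho_{2}$,
\[
\phi''(s) = -\frac{2\alpha_{1}}{\lambda_{1}^{2}}\rho_{1} - \frac{4\alpha_{1}^{2} s^{2}}{\delta \lambda_{1}^{4}}\rho_{1}\rho_{2}.
\]
Substituting into $\mathcal{G}^{\e}[U^{\delta}] = \langle Ax, D_x U^{\delta}\rangle_H - \tfrac{1}{2}|B^{\star}D_x U^{\delta}|_H^{2} + \tfrac{\e}{2}\Tr(BB^{\star}D^{2}_x U^{\delta})$ and telescoping the quadratic-in-$s$ pieces gives
\[
\mathcal{G}^{\e}[U^{\delta}](x) = \frac{2\alpha_{1}^{2} s^{2} \rho_{1}\rho_{2}}{\lambda_{1}^{2}}\Bigl(1 - \tfrac{\e}{\delta}\Bigr) - \e\alpha_{1}\rho_{1}.
\]
With the scaling $\delta = 2\e$ the parenthetical factor collapses to $\tfrac{1}{2}$, yielding the clean identity $\mathcal{G}^{\e}[U^{\delta}] = \tfrac{\alpha_{1}^{2} s^{2} \rho_{1}\rho_{2}}{\lambda_{1}^{2}} - \e\alpha_{1}\rho_{1}$.

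Then I would apply the $\eta$-correction from (\ref{Eq:Goperator}) — a direct calculation confirms that with $Z = (1-\eta)U^{\delta}$ this bound is in fact an equality — and use $|B^{\star}D_x U^{\delta}|_H^{2} = \tfrac{4\alpha_{1}^{2} s^{2}\rho_{1}^{2}}{\lambda_{1}^{2}}$ to factor out $\rho_{1}\alpha_{1}/\lambda_{1}^{2}$ and rewrite
\[
\mathcal{G}^{\e}[U^{\delta,\eta}, U^{\delta}](x) = \frac{\rho_{1}\alpha_{1}}{\lambda_{1}^{2}}\Bigl\{\alpha_{1} s^{2}\bigl[(1-\eta)\rho_{2} + 2\eta(1-2\eta)\rho_{1}\bigr] - (1-\eta)\e \lambda_{1}^{2}\Bigr\}.
\]
The proof then reduces to verifying that the braced expression is non-negative. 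Using $\rho_{1} + \rho_{2} = 1$ and the rewriting $(1-\eta)\rho_{2} + 2\eta(1-2\eta)\rho_{1} = (1-\eta) - (1 - 3\eta + 4\eta^{2})\rho_{1}$, together with $1 - 3\eta + 4\eta^{2} > 0$ for $\eta \in (0,1/4]$, I would split into the two regimes $s^{2} \geq \e^{\kappa}$ (where $\rho_{1} \geq 1/2$) and $s^{2} < \e^{\kappa}$ (where $\rho_{2} \geq 1/2$), and close each case using the hypothesis rewritten as $\e^{\kappa} \geq 2\e \lambda_{1}^{2}/\alpha_{1}$.

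The main obstacle, and the reason the entire mollification is needed, is the negative contribution $-\e\alpha_{1}\rho_{1}$ coming from the trace of $D_x^{2}U^{\delta}$; this is exactly the obstruction identified in Section \ref{S:IS} that makes the naked quasipotential choice $U = F_1$ fail. The parameters $\delta = 2\e$ and the threshold $\e^{\kappa}$ in $F_{2}^{\e}$ are calibrated so that on the active region $s^{2} \geq \e^{\kappa}$ the first-order term $\tfrac{\alpha_{1}^{2} s^{2} \rho_{1}\rho_{2}}{\lambda_{1}^{2}}$, reinforced by the $\eta$-boost from $|B^{\star}D_x U^{\delta}|_H^{2}$, dominates the trace term, whereas in the inactive region close to the attractor the weight $\rho_{1}$ is driven sub-exponentially small by the hypothesis $\e^{1-\kappa}\leq\alpha_{1}/(2\lambda_{1}^{2})$ so the leftover is harmless. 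The technically delicate step is the crossover $s^{2}\approx \e^{\kappa}$, where $\rho_{1}\rho_{2}$ and $\rho_{1}^{2}$ are of the same order and the two estimates must match up carefully.
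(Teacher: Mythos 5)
Your algebraic reduction is correct, and a nice simplification: since $U^{\delta}$ depends only on $s=\langle x,e_1\rangle_H$, a direct scalar computation gives
\[
\mathcal{G}^{\e}[U^{\delta,\eta},U^{\delta}](x)
= \frac{\rho_1\alpha_1}{\lambda_1^2}\Bigl\{\alpha_1 s^2\bigl[(1-\eta)\rho_2 + 2\eta(1-2\eta)\rho_1\bigr] - (1-\eta)\e\lambda_1^2\Bigr\},
\]
which matches the paper's bound built on $\beta_0 = \rho_1\rho_2\,|B^\star D_xF_1|^2$ and $\gamma_1=-\e\alpha_1$ (indeed you correctly note that the inequality from \eqref{Eq:Goperator} is in this setting an identity — the paper's displayed sign in \eqref{Eq:Goperator} even appears to carry a typo, as its appendix proof uses the plus sign, consistent with your derivation). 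Up to this point the two arguments are the same in substance.

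Where the proposal has a genuine gap is in the final sign argument, and it is a gap in both of the two regimes you propose. Your Case 2 ($s^2<\e^\kappa$) cannot yield $\geq 0$: the braced expression equals $-(1-\eta)\e\lambda_1^2 < 0$ at $s=0$, so the total is $-(1-\eta)\e\alpha_1\rho_1(0) < 0$; the lemma as literally stated is in fact not achievable and the paper's own proof (Lemma \ref{L:Region_0_y}) only claims nonnegativity ``up to an exponentially negligible term'' on the innermost region $B_1 = \{V_1\leq \e^{\kappa+\alpha}\}$, where $\rho_1$ is exponentially small. Your phrase ``the leftover is harmless'' waves at this, but a proof must say what ``harmless'' means and track it through to the theorem — the paper does this via the exponential-negligibility bookkeeping feeding into Theorem \ref{T:Scheme2}. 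Your Case 1 ($s^2\geq\e^\kappa$, $\rho_1\geq1/2$) does not close either: since $\rho_1+\rho_2=1$ and $1-3\eta+4\eta^2>0$, the bracket $(1-\eta)\rho_2+2\eta(1-2\eta)\rho_1$ is minimized at $\rho_1=1$ with value $2\eta(1-2\eta)$, so bounding $s^2$ from below by $\e^\kappa$ and using $\e^\kappa\geq 2\e\lambda_1^2/\alpha_1$ requires $4\eta(1-2\eta)\geq 1-\eta$, i.e.\ $8\eta^2-5\eta+1\leq 0$, which has negative discriminant and never holds. The reason it still works is that $\rho_1$ close to $1$ forces $s^2$ to be strictly larger than $\e^\kappa$ (by an additive amount of order $\delta\lambda_1^2/\alpha_1$), and the paper encodes exactly this coupling by using three bands: $B_1$ for the exponentially small $\rho_1$ zone, $B_3=\{V_1\geq 2\e^\kappa+\e\ln 3\}$ where one has both $\rho_1\geq 3/4$ \emph{and} $V_1$ twice as far out as $\e^\kappa$, and a buffer $B_2$ in between where the two sub-cases $\rho_1\lessgtr 1/2$ are handled separately. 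That calibration is the missing technical content; a bare split at $\rho_1=1/2$ loses it.

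One further caution: if you work out the constants, the region-$B_3$ estimate only closes when $\eta$ is bounded away from $0$ (roughly $\eta$ near $1/4$), something implicit in the paper's choice $\eta\leq 1/4$ and worth making explicit in any careful writeup.
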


\begin{Theorem} \label{thm:u_eps-to-u-scheme-2}
  Let $u^\e(x) = B^{\star} D_xU^{\delta}(x)$ where $U^{\delta}$ with $\delta=2\e$ is defined in \eqref{Eq:ControlDesignLinear2} then $u^\e$ converges uniformly  to $u(x) = \frac{\alpha_1}{\lambda_1^2} \left<x,e_1\right>_H$ on bounded subsets of $H$.
\end{Theorem}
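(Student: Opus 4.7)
The plan is to exploit the explicit form of $U^{\delta}$ in \eqref{Eq:ControlDesignLinear2} together with the crucial observation that $F^{\e}_{2}$ is constant in $x$. First, I would use the derivative formula for the exponential mollification recorded earlier in the section to write
\[
D_{x}U^{\delta}(x)=\rho_{1}(x)D_{x}F_{1}(x)+\rho_{2}(x)D_{x}F^{\e}_{2},\qquad \rho_{i}(x)=\frac{e^{-F_{i}(x)/\delta}}{e^{-F_{1}(x)/\delta}+e^{-F^{\e}_{2}/\delta}}.
\]
Since $F^{\e}_{2}$ does not depend on $x$, the second term vanishes, and therefore $u^{\e}(x)=-B^{\star}D_{x}U^{\delta}(x)=\rho_{1}(x)\,u(x)$, where $u(x)=-B^{\star}D_{x}F_{1}(x)=\frac{2\alpha_{1}}{\lambda_{1}}\langle x,e_{1}\rangle_{H}e_{1}$ is the limiting feedback control from Section \ref{S:CalculusOfVariationsLinearProblem}. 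In particular both $u^{\e}$ and $u$ vanish at $x=0$, which is why the point in the state space at which $\rho_{1}$ fails to be close to $1$ causes no obstruction.

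Next, with $\delta=2\e$ I would write $\rho_{1}$ in closed form using $F_{1}(x)-F^{\e}_{2}=\frac{\alpha_{1}}{\lambda_{1}^{2}}(\e^{\kappa}-\langle x,e_{1}\rangle_{H}^{2})$, which gives
\[
1-\rho_{1}(x)=\frac{1}{1+\exp\!\left(\frac{\alpha_{1}}{2\e\lambda_{1}^{2}}\bigl(\langle x,e_{1}\rangle_{H}^{2}-\e^{\kappa}\bigr)\right)}.
\]
Then $|u^{\e}(x)-u(x)|_{H}=(1-\rho_{1}(x))\tfrac{2\alpha_{1}}{\lambda_{1}}|\langle x,e_{1}\rangle_{H}|$. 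To prove uniform convergence on an arbitrary bounded set $\{|x|_{H}\le R\}$ I would split according to a threshold $|\langle x,e_{1}\rangle_{H}|\lessgtr \e^{\kappa/4}$. On the low region the bound $|\langle x,e_{1}\rangle_{H}|\le \e^{\kappa/4}$ alone forces $|u^{\e}(x)-u(x)|_{H}\to 0$ independently of $x$. On the high region one has $\langle x,e_{1}\rangle_{H}^{2}-\e^{\kappa}\ge \e^{\kappa/2}-\e^{\kappa}\ge \tfrac{1}{2}\e^{\kappa/2}$ for $\e$ small, so
\[
1-\rho_{1}(x)\le \exp\!\left(-\frac{\alpha_{1}}{4\lambda_{1}^{2}}\e^{\kappa/2-1}\right),
\]
and since $\kappa\in(0,1)$ gives $\kappa/2-1<0$, this decays faster than any polynomial in $\e$; combined with the a priori bound $|\langle x,e_{1}\rangle_{H}|\le R$, the product tends uniformly to $0$.

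I do not foresee a real obstacle. The only place where one must be slightly careful is in handling a neighborhood of the plane $\langle x,e_{1}\rangle_{H}=0$, where $\rho_{1}$ is bounded away from $1$; the argument above sidesteps this by exploiting the simultaneous vanishing of the prefactor $|\langle x,e_{1}\rangle_{H}|$ there. The explicit choice of the intermediate scale $\e^{\kappa/4}$ is not canonical but any exponent strictly between $0$ and $\kappa/2$ would work.
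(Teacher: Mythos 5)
Your proof is correct and follows essentially the same route as the paper: you observe that $F_2^\e$ is constant so $u^\e = \rho_1 u$, reduce to bounding $(1-\rho_1)|\langle x,e_1\rangle_H|$, and split on the size of $|\langle x,e_1\rangle_H|$. The one minor difference is your choice of intermediate threshold $\e^{\kappa/4}$ rather than the paper's $\e^{\kappa/2}$: this actually makes the uniformity in the ``high'' region cleaner, since it guarantees $\langle x,e_1\rangle_H^2-\e^\kappa$ is bounded below by $\tfrac12\e^{\kappa/2}$ rather than merely positive, whereas the paper's split requires the reader to notice that near $|\langle x,e_1\rangle_H|=\e^{\kappa/2}$ the prefactor is itself small. (Both arguments work; you also implicitly correct a typo in the paper, which writes $\e^\kappa$ where $\e^{\kappa/2}$ is meant.)
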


\begin{proof}
  Notice that
  \[B^{\star} D_xU^\delta(x) = \rho_1^\delta(x) B^{\star} D_x F_1(x) = \rho_1^\delta(x) \frac{\alpha_1}{\lambda_1} \left<x,e_1\right>_H \]
  where
  \[\rho_1^\delta(x) = \frac{e^{-\frac{F_1(x)}{\delta}}}{e^{-\frac{F_1(x)}{\delta}} + e^{-\frac{F^\e_2}{\delta}}}.\]

  Then setting $\delta=2\e$
  \[|u^\e(x) - u(x)| = \left|1-\rho_1^{2\e}(x)\right| \frac{\alpha_1}{\lambda_1}\left|\left<x,e_1\right>_H\right|.\]

  Notice that
  \[\left|1 - \rho_1^{2\e}(x)\right| = \frac{e^{-\frac{F^\e_2}{2\e}}}{e^{-\frac{F_1(x)}{2\e}}+ e^{-\frac{F^\e_2}{2\e}}} = \frac{1}{e^{\frac{F^\e_2 - F_1(x)}{2\e}}+1}\]

  If $F^\e_2> F_1(x)$ then the above expression converges to $0$. This happens whenever $|\left<x,e_1\right>_H|> \e^{\kappa}$. The convergence is uniform for $x$ satisfying $\e^{\kappa/2}< |\left<x, e_1 \right>_H| \leq L $. On the other hand, if $|\left<x,e_1\right>_H|\leq \e^{\kappa/2}$, then $|u^\e(x)-u(x)|_H < \frac{2\alpha_1\e^\kappa}{\lambda_1}$, because $\rho_1^\delta$ is bounded. This implies uniform convergence on bounded subsets of $H$.
\end{proof}

\begin{Theorem}
\label{T:Scheme2} Assume that $\delta=2\e$ and that for $\kappa\in(0,1)$, $\e^{1-\kappa}\leq\frac{\alpha_{1}}{2\lambda_1^2}$. Let $u(t,x)=-B^{\star} D_{x}U^{\delta}(t,x)$ where $U^{\delta}$ is defined in (\ref{Eq:ControlDesignLinear2}). Then up to an
exponentially negligible term in $\e$, we have
\[
-\e\log \mathcal{Q}^{\e}(0,0;u^\e)\geq %2
\frac{1}{2}(1-\eta)U^{\delta}(0,0)%-2(1-\eta)\E Z(\hat{\tau}^\e, \hat{X}^\e(\hat{\tau}^\e)),
\]
where
\[
U^{\delta}(0,0)\geq \frac{\alpha_{1}}{\lambda_1^2}(L^{2}-\e^{\kappa})  -\delta\log2.
\]
%and $\lim_{\e\downarrow 0}\E Z(\hat{\tau}^\e, \hat{X}^\e(\hat{\tau}^\e))=0$.
\end{Theorem}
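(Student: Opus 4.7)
The plan is to apply the non-asymptotic lower bound \eqref{eq:Q-eps-lower-bound2} with the test function $Z(t,x) = U^{\delta,\eta}(x) := (1-\eta)U^{\delta}(x)$ introduced before Lemma \ref{L:BoundGScheme2}. The right-hand side of \eqref{eq:Q-eps-lower-bound2} then decomposes into three pieces: an initial-point contribution $2Z(0,0)$, a terminal contribution $-2\E Z(\hat\tau^{v,\e},\hat X^{v,\e}(\hat\tau^{v,\e}))$, and a running-cost contribution $2\E\int_0^{\hat\tau^{v,\e}}\mathcal G^\e[Z,U^{\delta}](s,\hat X^{v,\e}(s))\,ds$. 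I would bound each piece separately and then take the infimum over $v \in \mathcal{A}$.

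The running-cost piece is immediate from Lemma \ref{L:BoundGScheme2}, which gives $\mathcal G^\e[U^{\delta,\eta},U^{\delta}](x)\geq 0$ for every $(t,x)\in[0,T]\times H$ under the standing hypothesis $\e^{1-\kappa}\leq \alpha_1/(2\lambda_1^2)$; this term is non-negative and may simply be dropped. The initial-point piece is purely algebraic: one has $F_1(0) = \frac{\alpha_1}{\lambda_1^2}L^2$ and $F_2^\e = \frac{\alpha_1}{\lambda_1^2}(L^2-\e^\kappa)$, so applying the elementary bound $-\delta\log(e^{-a/\delta}+e^{-b/\delta})\geq \min(a,b)-\delta\log 2$ with $a = F_1(0)$ and $b = F_2^\e$ gives
\begin{equation*}
U^{\delta}(0,0)\geq F_2^\e-\delta\log 2 = \frac{\alpha_1}{\lambda_1^2}(L^2-\e^\kappa)-\delta\log 2,
\end{equation*}
which is the lower bound on $U^\delta(0,0)$ asserted in the theorem.

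The terminal piece is where the bulk of the work sits and where the exponential-negligibility claim needs to be justified. The starting observation is that $U^\delta(x)\leq F_1(x) = \frac{\alpha_1}{\lambda_1^2}(L^2-\left<x,e_1\right>_H^2)$ everywhere, so on the exit boundary $|x|_H = L$ the value of $Z$ is controlled by the deficit $L^2 - \left<x,e_1\right>_H^2$. Theorem \ref{thm:u_eps-to-u-scheme-2} shows that the feedback $u^\e$ converges uniformly on bounded subsets of $H$ to $u(x) = \frac{\alpha_1}{\lambda_1^2}\left<x,e_1\right>_H$, which is exactly the hypothesis required by Theorem \ref{thm:conv-of-endpoint}; that theorem then supplies $\E\left<\hat X^{v^\e,\e}(\hat\tau^{v^\e,\e}),e_1\right>_H^2 \to L^2$ as $\e\to 0$, so $\E Z(\hat\tau^{v,\e},\hat X^{v,\e}(\hat\tau^{v,\e}))\to 0$. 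To strengthen this qualitative convergence to exponential negligibility in the sense of Definition \ref{D:ExponentialNegligibility}, I would parallel the one-dimensional argument of \cite[Theorem 4.7]{dsz-2014}: combine the weak-compactness results of Section \ref{S:WeakCompactness} with a large-deviation tail estimate on the event that the controlled trajectory exits in a direction bounded away from $\pm Le_1$, using that any such trajectory must carry an action exceeding the quasipotential at $\pm Le_1$ by a definite amount, hence contributes only exponentially small mass.

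The main obstacle is exactly this last upgrade: passing from the $L^2$-type limit of Theorem \ref{thm:conv-of-endpoint} to a quantitative $\e c_1 e^{-c_2/\e}$ bound on $\E Z(\hat\tau^{v,\e},\hat X^{v,\e}(\hat\tau^{v,\e}))$ that is uniform over $v \in \mathcal{A}$. Once that estimate is in place, the remaining ingredients (the non-negativity from Lemma \ref{L:BoundGScheme2} and the mollifier lower bound on $U^\delta(0,0)$) are essentially mechanical, and assembling the three contributions in \eqref{eq:Q-eps-lower-bound2} and taking the infimum over $v\in\mathcal A$ delivers the statement of Theorem \ref{T:Scheme2}.
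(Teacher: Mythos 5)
Your decomposition of the right-hand side of \eqref{eq:Q-eps-lower-bound2} into initial-point, terminal, and running-cost contributions is exactly the paper's strategy, and your treatment of the first and third pieces matches the paper: $\mathcal G^\e[U^{\delta,\eta},U^\delta]\geq 0$ by Lemma \ref{L:BoundGScheme2}, and $U^\delta(0,0)\geq \min\{F_1(0),F_2^\e\}-\delta\log 2 = \frac{\alpha_1}{\lambda_1^2}(L^2-\e^\kappa)-\delta\log 2$ by the elementary mollifier inequality.

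The divergence is in the terminal piece, and you have misattributed the role of the ``exponentially negligible term.'' The paper does \emph{not} prove (and does not need) exponential negligibility of $\E Z(\hat\tau^{v,\e},\hat X^{v,\e}(\hat\tau^{v,\e}))$. It invokes Theorem \ref{thm:conv-of-endpoint} only for the qualitative statement $\lim_{\e\to 0}\E U^{\delta,\eta}(\hat\tau^\e,\hat X^\e(\hat\tau^\e))=0$, and then simply observes that for $\e$ small enough this expectation is less than $Z(0,0)/2$. Absorbing the terminal term into half the initial-point contribution is precisely where the factor $\tfrac12$ in front of $(1-\eta)U^\delta(0,0)$ in the theorem's stated bound originates; no tail estimate is needed. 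The phrase ``up to an exponentially negligible term'' in the theorem tracks back to the running-cost piece, specifically Lemma \ref{L:Region_0_y} (the near-attractor region $B_1$), where $\mathcal G^\e\geq 0$ only holds up to an error of size $\e c_1 e^{-c_2/\e}$ coming from the weight $\rho_1$; this is then inherited by Lemma \ref{L:BoundGScheme2} and the theorem. Your proposed upgrade --- combining the weak-compactness machinery of Section \ref{S:WeakCompactness} with a large-deviations tail estimate for the event that the exit point is bounded away from $\pm Le_1$ --- would, if carried out, give a genuinely stronger, quantitative result, but it is not what the paper does and is not required for the stated inequality; you are taking on the hardest available path to prove a weaker claim than you think you need.
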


\begin{proof}
Recalling (\ref{eq:Q-eps-lower-bound2}) we have
\begin{align*}
-\e \log(\mathcal{Q}^{\e}(0,x,u^\e)) &\geq\inf_{v \in \mathcal{A}} \Bigg( 2 Z(0,x) -2 \E Z(\hat{\tau}^\e, \hat{X}^\e(\hat{\tau}^\e)) \nonumber\\
&\qquad + 2 \E \int_0^{\hat{\tau}^\e}  \mathcal{G}^\e[Z,U](s,\hat{X}^{\e}(s)) ds \Bigg).
 \end{align*}

Choose $Z = U^{\delta,\eta}$ as in Lemma \ref{L:BoundGScheme2}. By Theorem \ref{thm:conv-of-endpoint} we have that $\lim_{\e\rightarrow 0}\E U^{\delta,\eta}(\hat{\tau}^\e, \hat{X}^\e(\hat{\tau}^\e))=0$. Therefore, we can find a small enough $\e>0$ such that this expression is less than $Z(0,x)/2$. By Lemma \ref{L:BoundGScheme2} we have that $\mathcal{G}^{\e}[Z,U](t,x)\geq 0$.  Since $U^{\delta}(x)$ is the exponential mollification of two functions, Lemma 4.1 of \cite{dsz-2014} gives that for every $x\in H$
\[
U^{\delta}(x)\geq \min\left\{F_{1}(x), F^{\e}_{2}\right\}-\delta\log 2,
\]
concluding the proof of the theorem.
\end{proof}

\section{Numerical simulations}\label{S:Numerics}
In this section we demonstrate the theoretical results of this paper by  a series of simulation studies for (\ref{eq:intro-X}).   Clearly, if the initial point is in the domain of attraction of the stable equilibrium point of the SPDE, then for $L>0$, we are dealing with a rare event. Hence accelerated Monte Carlo methods such as importance samplings become relevant. We will apply the schemes of Section \ref{S:IS_schemes} and we will compare their performance with (a): standard Monte Carlo, which corresponds to no-change of measure at all, and with various other alternatives such as (b) reversing the dynamics everywhere in the domain of simulation and (c): forcing all the directions in the region away from the rest point as opposed to the suggested change of measure, where only the important $e_{1}$ is being forced.

The mild solution to
\begin{equation}\label{Eq:SimulatedProcess}
dX^\e(t) = (AX^\e(t) +  Bu(t))dt + \sqrt{\e}Bd\bar{w}(t)
\end{equation}
is
\[X^\e(t) = e^{tA}x + \int_0^t e^{(t-s)A}Bu(s)ds + \sqrt{\e}\int_0^t e^{(t-s)A}Bd\bar{w}(s),\]
where $e^{tA}$ is the $C_{0}$-semigroup generated by $A$.

We will only use controls in feedback form, i.e, $u(s)=u(s,X^\e(s))$. It is clear that in order to simulate the process given by (\ref{Eq:SimulatedProcess}), we need to discretize the equation in time and space. Here one can use many different methods ranging from finite differences to spectral methods. In the simulation below we used the exponential Euler scheme finite-dimensional Galerkin projection as it is described in  \cite{jk-2009}. In particular, we first notice that the $N^{\text{th}}$ Galerkin approximation for $X^{\e}$ (\ref{Eq:SimulatedProcess}) is given by
\begin{equation}\label{Eq:GalerkinApprox_SimulatedProcess}
    \begin{cases}
      dX_N^\e(t) = (A_N X_N^\e(t) + (\Pi_N Bu)(t, X_N^\e(t)))dt + \sqrt{\e} \Pi_N Bd\bar{w}(t) \\
      X_N^\e(0) = \Pi_N x,
    \end{cases}
  \end{equation}
and we refer the reader to Appendix B for the definition of the projection operators $A_N$ and $\Pi_N$. Under appropriate conditions on the control $u(t,x)$, the unique solution to (\ref{Eq:GalerkinApprox_SimulatedProcess}) can be written as
\begin{align}\label{Eq:GalerkinApprox_SimulatedProcess-weak}
      X_N^\e(t) &= e^{A_N t}\Pi_N x+\int_{0}^{t}e^{A_N(t-s)}(\Pi_N Bu)(s, X_N^\e(s))ds \nonumber\\
      &\quad + \sqrt{\e} \int_{0}^{t}e^{A_N(t-s)}\Pi_N Bd\bar{w}(t)
\end{align}

The exponential Euler numerical scheme, as introduced in \cite{jk-2009}, that we use in order to simulate from (\ref{Eq:GalerkinApprox_SimulatedProcess-weak}) goes as follows. Consider time step $h=T/\Lambda$ for some $\Lambda\in\mathbb{N}$ and discretization times $t_{k}=kh$ for $k=0,\cdots,\Lambda$. Then we set $\Theta_{0}^{N,\Lambda}=\Pi_N x$ and we define
\begin{align}\label{Eq:ExponentialEulerScheme1}
      \Theta_{k+1}^{N,\Lambda} &= e^{A_N h}\Theta_{k}^{N,\Lambda}+A_{N}^{-1}(e^{A_{N}h}-I) (\Pi_N Bu)(t_{k}, \Theta_{k}^{N,\Lambda}) \nonumber\\
      &\qquad + \sqrt{\e} \int_{t_{k}}^{t_{k+1}}e^{A_N(t_{k+1}-s)}\Pi_N Bd\bar{w}(t)
\end{align}

In particular, for given $N$ and $\Lambda$, and for $ \Theta_{k,j}^{N,\Lambda}=\left<e_{j},  \Theta_{k}^{N,\Lambda}\right>$ and $f_{N}^{j}=\left<e_{j}, \Pi_N Bu(t_k,\Theta_k^{N,\Lambda})\right>$ with $j=1,\cdots, N$, we have that the numerical scheme for the approximation to (\ref{Eq:SimulatedProcess}) is
\begin{equation}\label{Eq:ExponentialEulerScheme2}
      \Theta_{k+1,j}^{N,\Lambda} = e^{-\alpha_{j} h}\Theta_{k,j}^{N,\Lambda}+\frac{1-e^{-\alpha_{j}h}}{\alpha_{j}} f_{N}^{j} + \sqrt{\e} \lambda_j\sqrt{\frac{1-e^{-2\alpha_{j}h}}{2\alpha_{j}}} \xi_{k}^{j}
\end{equation}
where $\xi_{k}^{j}$ for $k=0,\cdots, \Lambda-1$ and $j=1,\cdots, N$ are independent, standard normally distributed random variables. As it is quantified in Theorem 3.1 and more precisely in Section 4(b) of \cite{jk-2009}, the strong error rate of convergence of this scheme for our case of interest is $N^{-(1/2)+\zeta}+\frac{\log \Lambda}{\Lambda}$ for an arbitrarily small $\zeta>0$.

In our numerical simulations we consider the stochastic heat equation and we take $B=I$ (i.e., we work with space-time white noise) and the operator $A$ to be the  realization of the Laplace operator $\frac{\partial^2}{\partial \xi^2}$ with Dirichlet boundary conditions in $H$. $A$ is diagonalizable in $H$. The complete orthonormal basis of $H$  is given by
\begin{equation}
  e_k(\xi) = \sqrt{2} \sin(k\pi\xi), \ \ k=1,2,3,...
\end{equation}
with the eigenvalues taking the form $\alpha_{k}= k^2\pi^2$. Our goal is to estimate quantities of the form (\ref{eq:intro-theta}).

All the simulations below were done using a parallel MPI C code with  $K=5\times10^5$ Monte Carlo trajectories and we consider exit from the ball of size $L=1$ of a system exposed to space-time white noise ($B=I$). As it is standard in the related literature, the measure of performance is relative error per sample, defined as
\[
\mbox{relative error per sample} \doteq\sqrt{K}\frac{\mbox{standard deviation of the estimator}}%
{\mbox{expected value of the estimator}}.
\]

The smaller the relative error per sample is, the more efficient the algorithm is and the more accurate the estimator is. However, in practice both the standard deviation and the expected value
of an estimator are typically unknown, which implies that empirical relative error is
often used for measurement. This means that the expected value of
the estimator will be replaced by the empirical sample mean, and the
standard deviation of the estimator will be replaced by the
empirical sample standard error.

 Before presenting the simulation results, let us comment on what the end conclusions of the numerical studies are\footnote{ Due to space limitations issues and due to the lack of any important additional information, we do not report estimated probability values for some of the test cases and we only report estimated relative errors per sample, which is the measure of performance being used. The data on probability estimates is available upon request.}.
\begin{enumerate}
\item{Standard Monte Carlo estimation, i.e. with no change of measure performs pretty bad as it is indicated in Tables \ref{Table100aSM}-\ref{Table100bSM}. A dash line indicates that there was no successful trajectory in the simulations and thus no estimate, good or bad, could be provided. Notice that the relative errors per sample in Table \ref{Table100bSM} are getting increasingly large making the reported probability values of Table \ref{Table100aSM} to be of no value.}
\item{The importance sampling scheme based on Scheme 2 performs very well as it is indicated in Tables \ref{TableEstimatedValues}-\ref{Table300b} with increasing accuracy as $T$ gets larger. In Table  \ref{TableEstimatedValues} we have picked some representative probability estimates and we have compared the estimated values for different levels of  the $N^{th}$ Galerkin approximation with $N=4,100,150,300$. We notice that the estimates range from events of the order of $10^{-4}$ to $10^{-22}$ and that the estimates are practically indistinguishable for $N=100,150,300$. This indicates that the first mode really dominates the rare event. This also leads us to conclude that $N=100$ is a sufficiently good lower dimensional approximation to the corresponding SPDE. Notice also that the relative errors per sample as reported in Tables \ref{Table4b}-\ref{Table300b} support the theoretical findings in that the scheme performs optimally as the theory predicts. In particular as $T$ gets larger and $\e$ gets smaller, relative errors decrease independently of the dimension. }
\item{In Tables \ref{Table100b2}-\ref{Table100b22} we report estimated relative errors per sample based on Scheme 1. Comparing these tables with the ones corresponding to Scheme 2, i.e., Tables \ref{Table4b}-\ref{Table300b}, we notice that Scheme 1 seems to be performing  a little bit better than the simpler Scheme 2 for small times, but the difference in performance disappears as $T$ gets larger. We note however that the slightly superior performance comes with a little bit of extra computational cost, in that instead of $F^{\e}_{2}$ of Scheme 2, one needs to compute at each step both $F_{2}(t,x)$ and $F_{3}(t,x)$ of Scheme 1. }
\item{In Table \ref{Table100twodequal_b} we investigate numerically the situation where the first two eigenvalues are the same, the third eigenvalue is well separated from the first and second and we project down to the $\{e_{1},e_{2}\}$ manifold. We observe that the performance of the scheme (in terms of relative error per sample) is pretty stable as $T$ gets larger. At the same time, if the spectral gap exists but we still project down to the $\{e_{1},e_{2}\}$ manifold instead of the  $\{e_{1}\}$ manifold, then the performance is quite bad, see Table \ref{Table100twodNotequal_b}.}
 \item{Simulations based on forcing  the modes everywhere (either all of the modes or only the first one) were also implemented, see Table \ref{Table100NOmollification_b} for $N=100$. A clear degradation in performance is indicated as $T$ gets larger.}
 \item{If there is a spectral gap that is not sufficiently large, then the performance starts degrading, see Table \ref{Table100Mollification_NoSpectralGapb}. }%This indicates that even if the conclusion of Theorem \ref{thm:min-traj-e1} is not tight in that a spectral gap of $3$ may not be necessary, some spectral gap is still necessary.}
\end{enumerate}

\begin{table}[htbp!]
\begin{center}%
\begin{scriptsize}
\begin{tabular}
[c]{|c|c|c|c|c|c|c|c|c|}\hline
$\varepsilon\hspace{0.1cm} | \hspace{0.1cm} T$&$1$&$2$ &$3$&$4$ & $6$  & $8$ & $10$ & $12$ \\
\hline  $0.09$ & $-$ & $4.80e-05$   & $9.39e-04$  &$1.65e-04$  &$3.12e-04$ & $4.76e-04 $ & $6.32e-04 $ &$7.04e-04 $\\
\hline  $0.08$ & $-$ & $1.80e-05$   & $2.6e-05$   &$4.39e-05$  &$9.40e-05$ & $1.08e-04 $ & $1.44e-04 $ &$1.62e-04 $\\
\hline  $0.07$ & $-$ & $2.00e-06$   & $2.01e-06$  &$1.39e-05$  &$7.99e-06$ & $2.40e-05 $ & $2.41e-05 $ &$4.00e-05 $\\
\hline  $0.06$ & $-$ & $-$          & $-$         &$-$         &$1.99e-06$ & $-$         & $-$         &$6.00e-06 $\\
\hline  $0.05$ & $-$ & $-$          & $-$         &$-$         &$-$        & $-$         & $-$         &$-$\\
\hline  $0.04$ & $-$ & $-$          & $-$         &$-$         &$-$        & $-$         & $-$         &$-$\\
\hline  $0.03$ & $-$ & $-$          & $-$         &$-$         &$-$        & $-$         & $-$         &$- $\\
\hline  $0.02$ & $-$ & $-$          & $-$         &$-$         &$-$        & $-$         & $-$         &$-$\\
\hline
\end{tabular}
\end{scriptsize}
\end{center}
\caption{Estimated probability values for $\theta^\e(0,T)$ for different pairs $(\varepsilon,T)$ when Galerkin projection level is $N=100$. The values reported are based on standard Monte Carlo without employing some change of measure.  }
\label{Table100aSM}%
\end{table}

\begin{table}[htbp!]
\begin{center}%
\begin{small}
\begin{tabular}
[c]{|c|c|c|c|c|c|c|c|c|}\hline
$\varepsilon\hspace{0.1cm} | \hspace{0.1cm} T$&$1$&$2$ &$3$&$4$ & $6$  & $8$ & $10$ & $12$ \\
\hline  $0.09$ & $-$ & $144$   & $103$    &$78$  &$57$ & $46 $ & $40 $ &$38 $\\
\hline  $0.08$ & $-$ & $235$   & $196$    &$150$  &$103$ & $96 $ & $83 $ &$79 $\\
\hline  $0.07$ & $-$ & $707$   & $707$    &$267$  &$353$ & $204$ & $204$ &$158$\\
\hline  $0.06$ & $-$ & $-$   & $-$        &$-$  &$707$ & $-$ & $-$ &$408 $\\
\hline  $0.05$ & $-$ & $-$   & $-$        &$-$  &$-$ & $-$ & $-$ &$-$\\
\hline  $0.04$ & $-$ & $-$   & $-$        &$-$  &$-$ & $-$ & $-$ &$-$\\
\hline  $0.03$ & $-$ & $-$   & $-$        &$-$  &$-$ & $-$ & $-$ &$-$\\
\hline  $0.02$ & $-$ & $-$   & $-$        &$-$  &$-$ & $-$ & $-$ &$-$\\
\hline
\end{tabular}
\end{small}
\end{center}
\caption{Estimated relative errors per sample for $\theta^\e(0,T)$ for different pairs $(\varepsilon,T)$ when Galerkin projection level is $N=100$.  The values reported are based on standard Monte Carlo without employing some change of measure. Notice that a probability of $2\times 10^{-6}$ means that exactly one of the $5 \times 10^5$ trajectories exited the region. The relative error in that case is $707 = \sqrt{5\times 10^5}$ }
\label{Table100bSM}%
\end{table}

\begin{table}[htbp!]
\begin{center}%
\begin{scriptsize}
\begin{tabular}
[c]{|c|c|c|c|c|c|}\hline
$\varepsilon$ & $T$ &$N=4$&$N=100$ &$N=150$ & $N=300$  \\
\hline  $0.09$ & $2$ &  $4.08e-05$   & $4.47e-05$   & $4.48e-05$     &$4.55e-05$ \\
        $0.09$ & $4$ &  $1.58e-04$   & $1.75e-04$   & $1.76e-04$     &$1.74e-04$ \\
        $0.09$ & $8$ &  $3.99e-04$   & $4.42e-04$   & $4.44e-04$     &$4.43e-04$ \\
        $0.09$ & $12$&  $6.42e-04$   & $7.10e-04$   & $7.12e-04$     &$7.11e-04$ \\
\hline
        $0.06$ & $2$ & $1.37e-07 $   & $1.52e-07$   & $1.52e-07$     &$1.51e-07$ \\
        $0.06$ & $4$ & $6.75e-07 $   & $7.45e-07$   & $7.46e-07$     &$7.46e-07$ \\
        $0.06$ & $8$ & $1.81e-06 $   & $2.01e-06$   & $2.00e-06$     &$2.00e-06$ \\
        $0.06$ & $12$& $2.93e-06 $   & $3.26e-06$   & $3.26e-06$     &$3.27e-06$ \\
\hline
        $0.04$ & $2$ & $2.57e-11 $   & $2.91e-11$   & $2.92e-11$     &$2.92e-11$ \\
        $0.04$ & $4$ & $1.74e-10 $   & $1.93e-10$   & $1.94e-10$     &$1.94e-10$ \\
        $0.04$ & $8$ & $4.96e-10 $   & $5.56e-10$   & $5.53e-10$     &$5.53e-10$ \\
        $0.04$ & $12$& $8.26e-10 $   & $9.16e-10$   & $9.21e-10$     &$9.20e-10$ \\
\hline
        $0.02$ & $2$ & $1.87e-22 $   & $2.08e-22$   & $2.13e-22$     &$2.23e-22$ \\
        $0.02$ & $4$ & $2.63e-21 $   & $2.93e-21$   & $2.94e-21$     &$2.97e-21$ \\
        $0.02$ & $8$ & $8.67e-21 $   & $9.61e-21$   & $9.51e-21$     &$9.61e-21$ \\
        $0.02$ & $12$& $1.46e-20 $   & $1.61e-20$   & $1.63e-20$     &$1.62e-20$ \\
\hline
\end{tabular}
\end{scriptsize}
\end{center}
\caption{Estimated probability values for $\theta^\e(0,T)$ for different pairs $(\varepsilon,T)$ and for Galerkin projection levels of $N=4,100,150,300$.  The importance sampling scheme being used is Scheme 2 of Section \ref{S:IS_schemes} with $\kappa=0.6$. }
\label{TableEstimatedValues}%
\end{table}

%\begin{table}[htbp!]
%\begin{center}%
%\begin{scriptsize}
%\begin{tabular}
%[c]{|c|c|c|c|c|c|c|c|c|}\hline
%$\varepsilon\hspace{0.1cm} | \hspace{0.1cm} T$&$1$&$2$ &$3$&$4$ & $6$  & $8$ & $10$ & $12$ \\
%\hline  $0.09$ & $2.99e-06 $ & $4.08e-05$   & $9.66e-05$     &$1.58e-04$  &$2.77e-04$ & $3.99e-04 $ & $5.19e-04 $ &$6.42e-04 $\\
%\hline  $0.08$ & $5.54e-07 $ & $9.79e-06$   & $2.46e-05$     &$4.05e-05$  &$7.21e-05$ & $1.04e-04 $ & $1.36e-04 $ &$1.68e-04 $\\
%\hline  $0.07$ & $7.28e-08 $ & $1.57e-06$   & $4.12e-06$     &$6.92e-06$  &$1.27e-05$ & $1.84e-05 $ & $2.41e-05 $ &$2.98e-05 $\\
%\hline  $0.06$ & $3.85e-09 $ & $1.37e-07$   & $3.88e-07$     &$6.75e-07$  &$1.23e-06$ & $1.81e-06 $ & $2.37e-06 $ &$2.93e-06 $\\
%\hline  $0.05$ & $7.10e-11 $ & $4.49e-09$   & $1.41e-08$     &$2.48e-08$  &$4.67e-08$ & $6.85e-08 $ & $9.05e-08 $ &$1.12e-07 $\\
%\hline  $0.04$ & $2.09e-13 $ & $2.57e-11$   & $9.54e-11$     &$1.74e-10$  &$3.37e-10$ & $4.96e-10 $ & $6.63e-10 $ &$8.26e-10 $\\
%\hline  $0.03$ & $1.49e-17 $ & $5.16e-15$   & $2.22e-14$     &$4.40e-14$  &$8.71e-14$ & $1.31e-13 $ & $1.76e-13 $ &$2.20e-13 $\\
%\hline  $0.02$ & $- $        & $1.87e-22$   & $1.22e-21$     &$2.63e-21$  &$5.78e-21$ & $8.67e-21 $ & $1.16e-20 $ &$1.46e-20$\\
%\hline
%\end{tabular}
%\end{scriptsize}
%\end{center}
%\caption{Estimated probability values for $\theta^\e(0,T)$ for different pairs $(\varepsilon,T)$ when Galerkin projection level is $N=4$.  The importance sampling scheme being used is Scheme 2 of Section \ref{S:IS_schemes} with $\kappa=0.6$. }
%\label{Table4a}%
%\end{table}

\begin{table}[htbp!]
\begin{center}%
\begin{small}
\begin{tabular}
[c]{|c|c|c|c|c|c|c|c|c|}\hline
$\varepsilon\hspace{0.1cm} | \hspace{0.1cm} T$&$1$&$2$ &$3$&$4$ & $6$  & $8$ & $10$ & $12$ \\
\hline  $0.09$ & $12.2 $ & $3.3$   & $2.0$     &$1.6$  &$1.1$ & $0.9 $ & $0.9 $ &$0.9 $\\
\hline  $0.08$ & $14.9 $ & $3.6$   & $2.1$     &$1.7$  &$1.2$ & $1.0 $ & $0.9 $ &$0.9 $\\
\hline  $0.07$ & $18.3 $ & $3.9$   & $3.4$     &$1.8$  &$1.3$ & $1.1 $ & $0.9 $ &$0.9 $\\
\hline  $0.06$ & $25.9 $ & $4.4$   & $2.6$     &$1.9$  &$1.4$ & $1.1 $ & $1.0 $ &$0.9 $\\
\hline  $0.05$ & $39.5 $   & $5.2$    & $2.9$      &$2.2$  &$1.6$ & $1.3 $ & $1.1 $ &$1.0 $\\
\hline  $0.04$ & $70.8 $ & $6.6$   & $3.4$     &$2.5$  &$1.8$ & $1.4 $ & $1.2 $ &$1.1 $\\
\hline  $0.03$ & $145 $ & $8.9$   & $4.3$    &$3.1$  &$2.1$ & $1.7 $ & $1.4 $ &$1.3 $\\
\hline  $0.02$ & $- $   & $16.1$   & $6.3$    &$4.2$  &$2.9$ & $2.3 $ & $1.9 $ &$1.7 $\\
\hline
\end{tabular}
\end{small}
\end{center}
\caption{Estimated relative errors per sample for $\theta^\e(0,T)$ for different pairs $(\varepsilon,T)$ when Galerkin projection level is $N=4$.  The importance sampling scheme being used is Scheme 2 of Section \ref{S:IS_schemes} with $\kappa=0.6$. }
\label{Table4b}%
\end{table}

%\begin{table}[htbp!]
%\begin{center}%
%\begin{scriptsize}
%\begin{tabular}
%[c]{|c|c|c|c|c|c|c|c|c|}\hline
%$\varepsilon\hspace{0.1cm} | \hspace{0.1cm} T$&$1$&$2$ &$3$&$4$ & $6$  & $8$ & $10$ & $12$ \\
%\hline  $0.09$ & $3.28e-06 $ & $4.47e-05$   & $1.08e-04$     &$1.75e-04$  &$3.08e-04$ & $4.42e-04 $ & $5.76e-04 $ &$7.10e-04 $\\
%\hline  $0.08$ & $6.47e-07 $ & $1.09e-05$   & $2.73e-05$     &$4.48e-05$  &$8.01e-05$ & $1.15e-04 $ & $1.51e-04 $ &$1.87e-04 $\\
%\hline  $0.07$ & $7.76e-08 $ & $1.74e-06$   & $4.62e-06$     &$7.75e-06$  &$1.41e-05$ & $2.04e-05 $ & $2.67e-05 $ &$3.31e-05 $\\
%\hline  $0.06$ & $4.78e-09 $ & $1.52e-07$   & $4.35e-07$     &$7.45e-07$  &$1.37e-06$ & $2.01e-06 $ & $2.64e-06 $ &$3.26e-06 $\\
%\hline  $0.05$ & $9.06e-11 $ & $4.94e-09$   & $1.57e-08$     &$2.76e-08$  &$5.20e-08$ & $7.65e-08 $ & $1.01e-07 $ &$1.25e-07 $\\
%\hline  $0.04$ & $2.76e-13 $ & $2.91e-11$   & $1.05e-10$     &$1.93e-10$  &$3.74e-10$ & $5.56e-10 $ & $7.36e-10 $ &$9.16e-10 $\\
%\hline  $0.03$ & $1.25e-17 $ & $5.74e-15$   & $2.49e-14$     &$4.90e-14$  &$9.72e-14$ & $1.47e-13 $ & $1.96e-13 $ &$2.43e-13 $\\
%\hline  $0.02$ & $- $ & $2.08e-22$   & $1.36e-21$     &$2.93e-21$  &$6.23e-21$ & $9.61e-21 $ & $1.28e-20 $&$1.61e-20$\\
%\hline
%\end{tabular}
%\end{scriptsize}
%\end{center}
%\caption{Estimated probability values for $\theta^\e(0,T)$ for different pairs $(\varepsilon,T)$ when Galerkin projection level is $N=100$.  The importance sampling scheme being used is Scheme 2 of Section \ref{S:IS_schemes} with $\kappa=0.6$. }
%\label{Table100a}%
%\end{table}

\begin{table}[htbp!]
\begin{center}%
\begin{small}
\begin{tabular}
[c]{|c|c|c|c|c|c|c|c|c|}\hline
$\varepsilon\hspace{0.1cm} | \hspace{0.1cm} T$&$1$&$2$ &$3$&$4$ & $6$  & $8$ & $10$ & $12$ \\
\hline  $0.09$ & $12 $ & $3.3$   & $2.1$     &$1.6$  &$1.1$ & $1.0 $ & $0.9 $ &$0.9 $\\
\hline  $0.08$ & $15 $ & $3.5$   & $2.2$     &$1.7$  &$1.2$ & $1.0 $ & $0.9 $ &$0.9 $\\
\hline  $0.07$ & $18 $ & $3.9$   & $2.4$     &$1.8$  &$1.3$ & $1.1 $ & $0.9 $ &$0.9 $\\
\hline  $0.06$ & $25 $ & $4.5$   & $2.6$     &$1.9$  &$1.4$ & $1.2 $ & $1.0 $ &$0.9 $\\
\hline  $0.05$ & $42 $ & $5.3$   & $2.9$     &$2.1$  &$1.5$ & $1.3 $ & $1.1 $ &$1.0 $\\
\hline  $0.04$ & $64 $ & $6.6$   & $3.4$     &$2.5$  &$1.8$ & $1.4 $ & $1.2 $ &$1.1 $\\
\hline  $0.03$ & $187$ & $9.0$   & $4.3$     &$3.0$  &$2.1$ & $1.7 $ & $1.5 $ &$1.3 $\\
\hline  $0.02$ & $-$  & $16.4$   & $6.2$     &$4.2$  &$2.9$ & $2.3 $ & $1.9 $ &$1.8 $\\
\hline
\end{tabular}
\end{small}
\end{center}
\caption{Estimated relative errors per sample for $\theta^\e(0,T)$ for different pairs $(\varepsilon,T)$ when Galerkin projection level is $N=100$.  The importance sampling scheme being used  is Scheme 2 of Section \ref{S:IS_schemes} with $\kappa=0.6$. }
\label{Table100b}%
\end{table}

%\begin{table}[htbp!]
%\begin{center}%
%\begin{scriptsize}
%\begin{tabular}
%[c]{|c|c|c|c|c|c|c|c|c|}\hline
%$\varepsilon\hspace{0.1cm} | \hspace{0.1cm} T$&$1$&$2$ &$3$&$4$ & $6$  & $8$ & $10$ & $12$ \\
%\hline  $0.09$ & $3.44e-06 $ & $4.48e-05$   & $1.08e-04$     &$1.76e-04$  &$3.08e-04$ & $4.44e-04 $ & $5.77e-04 $ &$7.12e-04 $\\
%\hline  $0.08$ & $6.81e-07 $ & $1.09e-05$   & $2.75e-05$     &$4.50e-05$  &$8.02e-05$ & $1.16e-04 $ & $1.51e-04 $ &$1.87e-04 $\\
%\hline  $0.07$ & $7.37e-08 $ & $1.79e-06$   & $4.62e-06$     &$7.76e-06$  &$1.41e-05$ & $2.05e-05 $ & $2.67e-05 $ &$3.32e-05 $\\
%\hline  $0.06$ & $4.68e-09 $ & $1.52e-07$   & $4.34e-07$     &$7.46e-07$  &$1.37e-06$ & $2.00e-06 $ & $2.63e-06 $ &$3.26e-06 $\\
%\hline  $0.05$ & $9.46e-11 $ & $4.99e-09$   & $1.56e-08$     &$2.74e-08$  &$5.19e-08$ & $7.66e-08 $ & $1.01e-07 $ &$1.26e-07 $\\
%\hline  $0.04$ & $2.65e-13 $ & $2.92e-11$   & $1.04e-10$     &$1.94e-10$  &$3.77e-10$ & $5.53e-10 $ & $7.39e-10 $ &$9.21e-10 $\\
%\hline  $0.03$ & $1.42e-17 $ & $5.68e-15$   & $2.53e-14$     &$4.86e-14$  &$9.79e-14$ & $1.46e-13 $ & $1.97e-13 $ &$2.44e-13 $\\
%\hline  $0.02$ & $- $& $2.13e-22$           & $1.37e-21$     &$2.94e-21$  &$6.24e-21$ & $9.51e-21 $ & $1.29e-20$&$1.63e-20$\\
%\hline
%\end{tabular}
%\end{scriptsize}
%\end{center}
%\caption{Estimated probability values for $\theta^\e(0,T)$ for different pairs $(\varepsilon,T)$ when Galerkin projection level is $N=150$.  The importance sampling scheme being used is Scheme 2 of Section \ref{S:IS_schemes} with $\kappa=0.6$.}
%\label{Table150a}%
%\end{table}

\begin{table}[htbp!]
\begin{center}%
\begin{small}
\begin{tabular}
[c]{|c|c|c|c|c|c|c|c|c|}\hline
$\varepsilon\hspace{0.1cm} | \hspace{0.1cm} T$&$1$&$2$ &$3$&$4$ & $6$  & $8$ & $10$ & $12$ \\
\hline  $0.09$ & $11.9 $ & $3.3$   & $2.1$    &$1.6$  &$1.1$ & $1.0 $ & $0.9 $ &$0.9 $\\
\hline  $0.08$ & $14.4 $ & $3.5$   & $2.2$    &$1.7$  &$1.2$ & $1.0 $ & $0.9 $ &$0.9 $\\
\hline  $0.07$ & $18.8 $ & $3.9$   & $2.4$    &$1.9$  &$1.3$ & $1.1 $ & $0.9 $ &$0.9 $\\
\hline  $0.06$ & $24.7 $ & $4.5$   & $2.6$    &$2.1$  &$1.4$ & $1.2 $ & $1.0 $ &$0.9 $\\
\hline  $0.05$ & $38.5 $ & $5.3$   & $2.9$    &$2.5$  &$1.5$ & $1.2 $ & $1.1 $ &$1.0 $\\
\hline  $0.04$ & $66.1 $ & $6.6$   & $3.5$     &$2.7$  &$1.8$ & $1.4 $ & $1.2 $ &$1.1 $\\
\hline  $0.03$ & $189 $ & $9.1$   & $4.3$     &$3.0$  &$2.1$ & $1.7 $ & $1.5 $ &$1.3 $\\
\hline  $0.02$ & $-$ & $16.2$   & $6.3$     &$4.2$  &$2.9$ & $2.3 $ & $1.9 $ &$1.8 $\\
\hline
\end{tabular}
\end{small}
\end{center}
\caption{Estimated relative errors per sample for $\theta^\e(0,T)$ for different pairs $(\varepsilon,T)$ when Galerkin projection level is $N=150$.  The importance sampling scheme being used is Scheme 2 of Section \ref{S:IS_schemes} with $\kappa=0.6$. }
\label{Table150b}%
\end{table}

%\newpage
%
%\begin{table}[htbp!]
%\begin{center}%
%\begin{scriptsize}
%\begin{tabular}
%[c]{|c|c|c|c|c|c|c|c|c|}\hline
%$\varepsilon\hspace{0.1cm} | \hspace{0.1cm} T$&$1$&$2$ &$3$&$4$ & $6$  & $8$ & $10$ & $12$ \\
%\hline  $0.09$ & $3.45e-06 $ & $4.55e-05$   & $1.08e-04$     &$1.74e-04$  &$3.08e-04$ & $4.43e-04 $ & $5.79e-04 $ &$7.11e-04 $\\
%\hline  $0.08$ & $6.91e-07 $ & $1.09e-05$   & $2.74e-05$     &$4.48e-05$  &$8.02e-05$ & $1.16e-04 $ & $1.52e-04 $ &$1.87e-04 $\\
%\hline  $0.07$ & $8.26e-08 $ & $1.75e-06$   & $4.66e-06$     &$7.77e-06$  &$1.42e-05$ & $2.05e-05 $ & $2.67e-05 $ &$3.32e-05 $\\
%\hline  $0.06$ & $4.77e-09 $ & $1.51e-07$   & $4.33e-07$     &$7.46e-07$  &$1.37e-06$ & $2.00e-06 $ & $2.64e-06 $ &$3.27e-06 $\\
%\hline  $0.05$ & $8.59e-11 $ & $4.92e-09$   & $1.56e-08$     &$2.76e-08$  &$5.21e-08$ & $7.65e-08 $ & $1.01e-07 $ &$1.25e-07 $\\
%\hline  $0.04$ & $2.78e-13 $ & $2.92e-11$   & $1.06e-10$     &$1.94e-10$  &$3.74e-10$ & $5.53e-10 $ & $7.41e-10 $ &$9.20e-10 $\\
%\hline  $0.03$ & $1.68e-17 $ & $5.58e-15$   & $2.48e-14$     &$4.89e-14$  &$9.78e-14$ & $1.46e-13 $ & $1.97e-13 $ &$2.46e-13 $\\
%\hline  $0.02$ & $1.40e-25 $& $2.23e-22$   & $1.37e-21$     &$2.97e-21$  &$6.28e-21$ & $9.61e-21 $ & $1.29e-20 $&$1.62e-20$\\
%\hline
%\end{tabular}
%\end{scriptsize}
%\end{center}
%\caption{Estimated probability values for $\theta^\e(0,T)$ for different pairs $(\varepsilon,T)$ when Galerkin projection level is $N=300$.  The importance sampling scheme being used is Scheme 2 of Section \ref{S:IS_schemes} with $\kappa=0.6$. }
%\label{Table300a}%
%\end{table}

\begin{table}[htbp!]
\begin{center}%
\begin{small}
\begin{tabular}
[c]{|c|c|c|c|c|c|c|c|c|}\hline
$\varepsilon\hspace{0.1cm} | \hspace{0.1cm} T$&$1$&$2$ &$3$&$4$ & $6$  & $8$ & $10$ & $12$ \\
\hline  $0.09$ & $11.9 $ & $3.2$   & $2.1$    &$1.6$  &$1.1$ & $1.0 $ & $0.9 $ &$0.9 $\\
\hline  $0.08$ & $14.2 $ & $3.6$   & $2.2$    &$1.7$  &$1.2$ & $1.0 $ & $0.9 $ &$0.9 $\\
\hline  $0.07$ & $18.3 $ & $3.9$   & $2.3$    &$1.8$  &$1.3$ & $1.1 $ & $0.9 $ &$0.9 $\\
\hline  $0.06$ & $25.3 $ & $4.5$   & $2.6$    &$1.9$  &$1.4$ & $1.2 $ & $1.0 $ &$0.9 $\\
\hline  $0.05$ & $39.3 $ & $5.3$   & $2.9$    &$2.1$  &$1.5$ & $1.2 $ & $1.1 $ &$1.0 $\\
\hline  $0.04$ & $68.6 $ & $6.5$   & $3.4$     &$2.5$  &$1.8$ & $1.4 $ & $1.2 $ &$1.1 $\\
\hline  $0.03$ & $161 $ & $9.2$   & $4.3$     &$3.0$  &$2.1$ & $1.7 $ & $1.5 $ &$1.3 $\\
\hline  $0.02$ & $500$ & $15.7$   & $6.3$     &$4.2$  &$2.9$ & $2.3 $ & $1.9 $ &$1.7 $\\
\hline
\end{tabular}
\end{small}
\end{center}
\caption{Estimated relative errors per sample for $\theta^\e(0,T)$ for different pairs $(\varepsilon,T)$ when Galerkin projection level is $N=300$.  The importance sampling scheme being used is Scheme 2 of Section \ref{S:IS_schemes} with $\kappa=0.6$. }
\label{Table300b}%
\end{table}

\begin{table}[htbp!]
\begin{center}%
\begin{small}
\begin{tabular}
[c]{|c|c|c|c|c|c|c|c|c|}\hline
$\varepsilon\hspace{0.1cm} | \hspace{0.1cm} T$&$1$&$2$ &$3$&$4$ & $6$  & $8$ & $10$ & $12$ \\
\hline  $0.09$ & $3.7 $ & $1.1$   & $1.1$    &$1.4$  &$1.3$ & $1.2 $ & $1.1 $ &$1.0 $\\
\hline  $0.08$ & $4.3 $ & $1.2$   & $1.0$    &$1.4$  &$1.4$ & $1.3 $ & $1.1 $ &$1.0 $\\
\hline  $0.07$ & $5.2 $ & $1.3$   & $0.9$    &$1.5$  &$1.6$ & $1.4 $ & $1.3 $ &$1.2 $\\
\hline  $0.06$ & $6.4 $ & $1.4$   & $0.9$    &$1.5$  &$1.7$ & $1.6 $ & $1.4 $ &$1.3 $\\
\hline  $0.05$ & $8.7 $ & $1.6$   & $0.9$    &$1.5$  &$1.9$ & $1.8 $ & $1.7 $ &$1.6 $\\
\hline  $0.04$ & $13.8 $ & $1.6$   & $0.9$     &$1.2$  &$2.3$ & $2.3 $ & $2.1 $ &$1.9 $\\
\hline  $0.03$ & $29.1 $ & $1.8$   & $1.0$     &$1.1$  &$3.1$ & $3.1 $ & $2.9 $ &$2.7 $\\
\hline  $0.02$ & $121.9 $ & $2.4$   & $1.1$     &$1.0$  &$4.6$ & $5.2 $ & $5.1 $ &$4.7 $\\
\hline
\end{tabular}
\end{small}
\end{center}
\caption{Estimated relative errors per sample for $\theta^\e(0,T)$ for different pairs $(\varepsilon,T)$ when Galerkin projection level is $N=100$.  The importance sampling scheme being used is  Scheme 1 of Section \ref{S:IS_schemes}  with parameters $(M,t^{*})=(\e^{-0.5}, -\frac{2}{\alpha_{1}}\log(\e^{0.5}))$. }
\label{Table100b2}%
\end{table}

\begin{table}[htbp!]
\begin{center}%
\begin{small}
\begin{tabular}
[c]{|c|c|c|c|c|c|c|c|c|}\hline
$\varepsilon\hspace{0.1cm} | \hspace{0.1cm} T$&$1$&$2$ &$3$&$4$ & $6$  & $8$ & $10$ & $12$ \\
\hline  $0.09$ & $3.7 $ & $1.5$   & $1.6$    &$1.4$  &$1.1$ & $0.9 $ & $0.8 $ &$0.8 $\\
\hline  $0.08$ & $4.3 $ & $1.5$   & $1.6$    &$1.4$  &$1.2$ & $1.0 $ & $0.9 $ &$0.8 $\\
\hline  $0.07$ & $5.2 $ & $1.3$   & $1.6$    &$1.5$  &$1.2$ & $1.1 $ & $0.9 $ &$0.9 $\\
\hline  $0.06$ & $6.4 $ & $1.4$   & $1.6$    &$1.6$  &$1.3$ & $1.2 $ & $1.0 $ &$0.9 $\\
\hline  $0.05$ & $8.7 $ & $1.6$   & $1.4$    &$1.7$  &$1.5$ & $1.3 $ & $1.1 $ &$1.1 $\\
\hline  $0.04$ & $13.8 $ & $1.6$   & $1.2$     &$1.7$  &$1.6$ & $1.4 $ & $1.2 $ &$1.2 $\\
\hline  $0.03$ & $29.1 $ & $1.8$   & $1.0$     &$1.7$  &$1.9$ & $1.7 $ & $1.6 $ &$1.4 $\\
\hline  $0.02$ & $118.8 $ & $2.4$   & $1.0$     &$1.4$  &$2.4$ & $2.3 $ & $2.1 $ &$1.9 $\\
\hline
\end{tabular}
\end{small}
\end{center}
\caption{Estimated relative errors per sample for $\theta^\e(0,T)$ for different pairs $(\varepsilon,T)$ when Galerkin projection level is $N=100$.  The importance sampling scheme being used is  Scheme 1 of Section \ref{S:IS_schemes}  with parameters $(M,t^{*})=(\e^{-0.6}, -\frac{2}{\alpha_{1}}\log(\e^{0.6}))$. }
\label{Table100b22}%
\end{table}

\begin{table}[htbp!]
\begin{center}%
\begin{small}
\begin{tabular}
[c]{|c|c|c|c|c|c|c|c|c|}\hline
$\varepsilon\hspace{0.1cm} | \hspace{0.1cm} T$&$1$&$2$ &$3$&$4$ & $6$  & $8$ & $10$ & $12$ \\
\hline  $0.09$ & $5.8 $ & $1.7$   & $1.1$    &$0.9$  &$0.9$ & $1.0 $ & $1.4 $ &$1.8 $\\
\hline  $0.08$ & $6.9 $ & $1.8$   & $1.2$    &$0.9$  &$0.9$ & $1.0 $ & $1.3 $ &$1.7 $\\
\hline  $0.07$ & $8.6 $ & $2.1$   & $1.3$    &$1.0$  &$0.9$ & $1.0 $ & $1.2 $ &$1.6 $\\
\hline  $0.06$ & $11.8 $ & $2.3$   & $1.4$    &$1.0$  &$0.9$ & $1.0 $ & $1.1 $ &$1.4 $\\
\hline  $0.05$ & $17.1 $ & $2.7$   & $1.6$    &$1.1$  &$0.9$ & $1.0 $ & $1.0 $ &$1.2 $\\
\hline  $0.04$ & $28.8 $ & $3.3$   & $1.8$     &$1.3$  &$1.0$ & $0.9 $ & $1.0 $ &$1.1 $\\
\hline  $0.03$ & $76.5 $ & $4.5$   & $2.2$     &$1.6$  &$1.1$ & $0.9 $ & $1.0 $ &$1.0 $\\
\hline  $0.02$ & $500$ & $7.4$   & $3.2$     &$2.2$  &$1.5$ & $1.2 $ & $1.1 $ &$1.0 $\\
\hline
\end{tabular}
\end{small}
\end{center}
\caption{Estimated relative errors per sample for $\theta^\e(0,T)$ for different pairs $(\varepsilon,T)$ when Galerkin projection level is $N=100$. We project to the $\{e_{1},e_{2}\}$ manifold and the eigenvalues are $\alpha_{1}=\alpha_{2}=1$ while $\alpha_{k}=k^{2}$ for $k\geq 3$. The importance sampling scheme being used is Scheme 2 of Section \ref{S:IS_schemes} with $\kappa=0.6$. }
\label{Table100twodequal_b}%
\end{table}

\begin{table}[htbp!]
\begin{center}%
\begin{small}
\begin{tabular}
[c]{|c|c|c|c|c|c|c|c|c|}\hline
$\varepsilon\hspace{0.1cm} | \hspace{0.1cm} T$&$1$&$2$ &$3$&$4$ & $6$  & $8$ & $10$ & $12$ \\
\hline  $0.09$ & $175 $ & $54$   & $34$    &$26$  &$20$ & $16 $ & $15 $ &$13 $\\
\hline  $0.08$ & $220 $ & $82$   & $47$    &$38$  &$29$ & $23 $ & $21 $ &$19 $\\
\hline  $0.07$ & $360 $ & $118$   & $83$    &$63$  &$45$ & $37 $ & $32 $ &$29 $\\
\hline  $0.06$ & $453 $ & $250$   & $132$    &$104$  &$72$ & $59 $ & $49 $ &$48 $\\
\hline  $0.05$ & $471 $ & $341$   & $220$    &$168$  &$146$ & $101 $ & $83 $ &$85 $\\
\hline  $0.04$ & $543 $ & $499$   & $483$     &$302$  &$297$ & $268 $ & $222 $ &$155 $\\
\hline  $0.03$ & $- $ & $500$   & $500$     &$453$  &$402$ & $364 $ & $353$ &$397 $\\
\hline  $0.02$ & $-$ & $-$   & $-$     &$-$  &$-$ & $- $ & $- $ &$- $\\
\hline
\end{tabular}
\end{small}
\end{center}
\caption{Estimated relative errors per sample for $\theta^\e(0,T)$ for different pairs $(\varepsilon,T)$ when Galerkin projection level is $N=100$. We project to the $\{e_{1},e_{2}\}$ manifold and the eigenvalues are $k^{2}$ for $k\geq 1$. The importance sampling scheme being used is Scheme 2 of Section \ref{S:IS_schemes} with $\kappa=0.6$. }
\label{Table100twodNotequal_b}%
\end{table}

\begin{table}[htbp!]
\begin{center}%
\begin{small}
\begin{tabular}
[c]{|c|c|c|c|c|c|c|c|c|}\hline
$\varepsilon\hspace{0.1cm} | \hspace{0.1cm} T$&$1$&$2$ &$3$&$4$ & $6$  & $8$ & $10$ & $12$ \\
\hline  $0.09$ & $3.7 $ & $1.1$   & $0.9$    &$1.1$  &$2.2$ & $4.5 $ & $10 $ &$21 $\\
\hline  $0.08$ & $4.4 $ & $1.2$   & $0.9$    &$1.1$  &$2.2$ & $4.5 $ & $10 $ &$20 $\\
\hline  $0.07$ & $5.1 $ & $1.2$   & $0.9$    &$1.1$  &$2.1$ & $4.2 $ & $10 $ &$17 $\\
\hline  $0.06$ & $6.4 $ & $1.3$   & $0.9$    &$1.1$  &$2.2$ & $4.2 $ & $9.5 $ &$17 $\\
\hline  $0.05$ & $8.7 $ & $1.4$   & $0.9$    &$1.0$  &$2.1$ & $3.9 $ & $8.5 $ &$17 $\\
\hline  $0.04$ & $13.6 $ & $1.6$   & $0.9$     &$1.0$  &$1.9$ & $3.9 $ & $8.9 $ &$14 $\\
\hline  $0.03$ & $28.2 $ & $1.8$   & $0.9$     &$1.0$  &$1.8$ & $3.6$ & $7.6$ &$15 $\\
\hline  $0.02$ & $128.3$ & $2.4$   & $1.1$          &$0.9$  &$1.7$ & $3.4 $ & $7.4 $ &$16 $\\
\hline
\end{tabular}
\end{small}
\end{center}
\caption{Estimated relative errors per sample for $\theta^\e(0,T)$ for different pairs $(\varepsilon,T)$ when Galerkin projection level is $N=100$. We project to the $\{e_{1}\}$ manifold everywhere (i.e., even in the neighborhood of the attractor) and the eigenvalues are $k^{2}$ for $k\geq 1$. }
\label{Table100NOmollification_b}%
\end{table}

\begin{table}[htbp!]
\begin{center}%
\begin{small}
\begin{tabular}
[c]{|c|c|c|c|c|c|c|c|c|}\hline
$\varepsilon\hspace{0.1cm} | \hspace{0.1cm} T$&$1$&$2$ &$3$&$4$ & $6$  & $8$ & $10$ & $12$ \\
\hline  $0.09$ & $14.8 $ & $5.6$   & $2.9$    &$2.3$  &$2.2$ & $1.8 $ & $1.5 $ &$2.0 $\\
\hline  $0.08$ & $21.9 $ & $5.7$   & $3.0$    &$2.5$  &$2.3$ & $1.9 $ & $3.7 $ &$3.6 $\\
\hline  $0.07$ & $20.9 $ & $5.3$   & $3.2$    &$2.5$  &$2.1$ & $1.7 $ & $1.6 $ &$2.1 $\\
\hline  $0.06$ & $38.6 $ & $7.7$   & $4.1$    &$2.8$  &$13.6$ & $2.0 $ & $1.9 $ &$2.7 $\\
\hline  $0.05$ & $56.4 $ & $7.4$   & $4.1$    &$3.1$  &$2.2$ & $1.9 $ & $1.8 $ &$2.8 $\\
\hline  $0.04$ & $73.4 $ & $8.6$  & $6.1$    &$3.8$  &$2.6$ & $2.2 $ & $2.4 $ &$2.7 $\\
\hline  $0.03$ & $258.7 $ & $10.6$  & $5.1$    &$4.1$  &$2.8$ & $2.4$  & $2.2$  &$2.8 $\\
\hline  $0.02$ & $-$ & $17.7$  & $9.0$    &$5.4$  &$4.1$ & $3.6 $ & $2.8 $ &$3.5 $\\
\hline
\end{tabular}
\end{small}
\end{center}
\caption{Estimated relative errors per sample for $\theta^\e(0,T)$ for different pairs $(\varepsilon,T)$ when Galerkin projection level is $N=100$. We project to the $\{e_{1}\}$ manifold and the eigenvalues are $1,2$ and then $k^{2}$ for $k\geq 3$. The importance sampling scheme being used is Scheme 2 of Section \ref{S:IS_schemes} with $\kappa=0.6$. }
\label{Table100Mollification_NoSpectralGapb}%
\end{table}

\section{Conclusions and future work}\label{S:Conclusions}
In this paper we studied the issues that arise in the design of importance sampling schemes for small noise infinite dimensional stochastic dynamical systems. We concentrated on the linear case where we could also provide conditions on the spectral gap and we could design importance sampling methods whose performance does not degrade due to the infinite dimensionality of the system or due to prelimit effects.

Our results are a promising first step towards building provably efficient and implementable importance samplings schemes for the estimation of rare events for nonlinear SPDEs. There are however certain issues that need to be first understood better before addressing the nonlinear case. In the linear case the idea is that if a sufficiently large spectral gap exists then the rare event takes place in a lower dimensional manifold which happens to be affine (the $e_1$ direction). In this case we built our importance sampling schemes by projecting to the span of $e_1$. In Section \ref{S:IS} we demonstrated that projecting onto lower-dimensional manifolds is necessary to avoid infinite trace second derivatives. In the nonlinear case, the first problem is identifying the lower dimensional manifold where the rare event takes place (if one exists) and the second problem is how to project to this manifold if it is not affine. Solving the first problem seems to be problem dependent and it may be difficult to find closed form solutions. Even when one can explicitly characterize the lower-dimensional manifold where the rare event is likely to take place, it is not clear how to project onto that manifold if it is not linear. The results of this paper could potentially be useful for building sensible algorithms where one could  linearize locally the underlying manifold where the rare event takes place and apply the methods of this paper in a local fashion. These issues present interesting challenges for future work.

\appendix
\section{Proof of Lemma \ref{L:BoundGScheme2}}\label{A:ProofLemmaBoundGScheme2}

Before proving Lemma \ref{L:BoundGScheme2} let us define some useful quantities. Set
\begin{align*}
\beta_{0}(x)  &  =\left[  \rho_{1}(x)\left| B^{\star} D_{x}F_{1}(x)\right|^{2}_{H}-\rho_{1}^{2}(x)\left| B^{\star}  D_{x}F_{1}(x)\right|^{2}_{H}\right]
\end{align*}
and notice that $\rho_{1}\in[0,1]$, guarantees that $\beta_{0}(x)\geq0$. In addition, let us define
\[
\gamma_{1}=\mathcal{G}^{\e}[F_{1}](x)=-\e \alpha_{1}.
\]
By the argument of Lemma 4.1 of \cite{dsz-2014}
applied  to $\mathcal{G}^{\varepsilon}[U^{\delta,\eta}](x)$
and (\ref{Eq:Goperator}) we get%
\begin{align}
\mathcal{G}^{\e}[U^{\delta,\eta},U^{\delta}](x)\geq &
\frac{1-\eta}{2}\left(  1-\frac{\e}{\delta}\right)\beta_{0}(x)+(1-\eta)\rho_{1}(x)\gamma_{1} \nonumber\\ &\qquad+\frac{\eta-2\eta^{2}}{2}\rho_{1}^{2}(x) \left|B^{\star} D_{x}F_{1}(x)\right|^{2}_{H}
\label{Eq:bound_for_moll}%
\end{align}
for all $x\in H$. The lower bound for the operator $\mathcal{G}^{\e}[U^{\delta,\eta},U^{\delta}](x)$, given by (\ref{Eq:bound_for_moll}), will be based on a separate analysis for  three
different regions that are determined by level sets of $V_{1}(x)=|<x,e_{1}>|^{2}$.

Let $\kappa\in(0,1)$ to be chosen, $\alpha\in(0,1-\kappa)$ and consider $K$ such that $\frac{e^{-K}}{e^{-K}+1}=\frac{3}{4}$, i.e., $K=-\ln 3<0$. Let us also assume $\e\in(0,1)$ . Then, we define
\begin{eqnarray}
B_{1}&=&\left\{x\in H: V_{1}(x)\leq \e^{\kappa+\alpha}, \kappa\in(0,1),\alpha\in(0,1-\kappa) \right\}\nonumber\\
B_{2}&=&\left\{x\in H: \e^{\kappa+\alpha}\leq V_{1}(x)\leq \e^{\kappa}+ \left(\e^{\kappa}-\e K\right)\right\}\nonumber\\
B_{3}&=&\left\{x\in H:  \e^{\kappa}+ \left(\e^{\kappa}-\e K\right) \leq V_{1}(x)\leq L^{2}\right\}\nonumber
\end{eqnarray}

Lemma \ref{L:BoundGScheme2} is a direct consequence of Lemmas \ref{L:Region_0_y}, \ref{L:Region_hatx_A} and \ref{L:Region_y_hatx} that treat the regions $B_{1}, B_{3}$ and $B_{2}$ respectively.
\begin{Lemma}
\label{L:Region_0_y} Assume that $x\in B_{1}$, $\delta=2\e$, $\eta\leq1/2$ and $\e\in(0,1)$. Then, up to an
exponentially negligible term
\[
\mathcal{G}^{\varepsilon}[U^{\delta,\eta},U^{\delta}](x)\geq0.
\]
\end{Lemma}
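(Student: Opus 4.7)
The plan is to exploit the fact that on $B_1$ the first coordinate of $x$ is so small that $F_1(x)$ is \emph{strictly larger} than the constant $F_2^\e$, which forces the weight $\rho_1(x)$ into the exponential mollification to be exponentially negligible in $\e$. Once this is established, the inequality \eqref{Eq:bound_for_moll} essentially collapses to its nonnegative terms, with the single negative contribution being swamped by an $\e c_1 e^{-c_2/\e}$ bound.

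Concretely, the first step is to compute $F_1(x) - F_2^\e = \frac{\alpha_1}{\lambda_1^2}(\e^\kappa - \langle x,e_1\rangle_H^2)$. On $B_1$ we have $\langle x,e_1\rangle_H^2 \leq \e^{\kappa+\alpha}$ with $\alpha\in(0,1-\kappa)$, so
\begin{equation*}
F_1(x) - F_2^\e \;\geq\; \frac{\alpha_1}{\lambda_1^2}\,\e^\kappa(1-\e^\alpha).
\end{equation*}
Since $\delta = 2\e$ and $\kappa<1$, the elementary estimate $\rho_1(x) = \bigl(1+e^{(F_1(x)-F_2^\e)/\delta}\bigr)^{-1} \leq \exp\bigl(-\tfrac{\alpha_1}{2\lambda_1^2}\e^{\kappa-1}(1-\e^\alpha)\bigr)$ shows that $\rho_1(x)$, and hence $\rho_1(x)^2$, is exponentially negligible uniformly on $B_1$ for $\e$ small.

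The second step is to read off the sign of each term in the lower bound \eqref{Eq:bound_for_moll}. With $\delta=2\e$ we have $1-\e/\delta = 1/2 > 0$, and by construction $\beta_0(x) = \rho_1(x)(1-\rho_1(x))\,|B^\star D_x F_1(x)|_H^2 \geq 0$. The restriction $\eta\leq 1/2$ (which fits inside the range $\eta<1/4$ imposed in Theorem \ref{T:Scheme2}) guarantees $\eta - 2\eta^2 \geq 0$, so the third term on the right of \eqref{Eq:bound_for_moll} is nonnegative as well. Therefore the only term that can make a negative contribution is $(1-\eta)\rho_1(x)\gamma_1 = -(1-\eta)\,\e\alpha_1\,\rho_1(x)$, which by the first step is bounded below by $-\e c_1 e^{-c_2/\e}$ for some constants $c_1<\infty$, $c_2>0$ that depend only on $\alpha_1,\lambda_1,\kappa,\alpha$.

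Combining these observations yields $\mathcal{G}^\e[U^{\delta,\eta},U^\delta](x) \geq -\e c_1 e^{-c_2/\e}$, which is the claim modulo the definition of exponential negligibility (Definition \ref{D:ExponentialNegligibility}). The only step requiring any care is verifying that the exponent $\e^{\kappa-1}$ diverges as $\e\downarrow 0$ — this is where the assumption $\kappa<1$ is used in an essential way, and it is also where the buffer zone $B_1$ (defined with the strictly larger exponent $\kappa+\alpha$ rather than $\kappa$) is needed so that the gap $F_1-F_2^\e$ is bounded below by a positive constant times $\e^\kappa$ rather than vanishing on the boundary of the region.
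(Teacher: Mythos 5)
Your proof is correct and takes essentially the same route as the paper's: observe that on $B_1$ one has $F_1(x) - F_2^\e \geq \frac{\alpha_1}{\lambda_1^2}\e^\kappa(1-\e^\alpha) > 0$, deduce from the explicit form of the mollification weight that $\rho_1(x)$ is exponentially small, identify $(1-\eta)\rho_1(x)\gamma_1$ as the sole possibly negative contribution to the lower bound \eqref{Eq:bound_for_moll}, and check that $\beta_0 \geq 0$, $1 - \e/\delta = 1/2 > 0$, and $\eta - 2\eta^2 \geq 0$ handle the remaining terms. You make the paper's compressed argument more explicit in two helpful places: you spell out the quantitative bound $\rho_1(x) \leq \exp\bigl(-\tfrac{\alpha_1}{2\lambda_1^2}\e^{\kappa-1}(1-\e^\alpha)\bigr)$ and you explicitly verify the sign of the $1-\e/\delta$ factor, both of which the paper leaves implicit; the minor wording slip about the $\eta$ range in Theorem \ref{T:Scheme2} (it is the theorem's constraint $\eta < 1/4$ that sits inside the lemma's $\eta \leq 1/2$, not the reverse) is immaterial to the argument.
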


\begin{proof}
In this region, we are guaranteed that $F_{1}(x)> F_{2}^{\e}$. Indeed, we have that
\begin{align}
F_{1}(x)- F_{2}^{\e}&\geq \frac{\alpha_{1}}{\lambda_1^2}\left(\e^{\kappa}-\e^{\kappa+\alpha}\right)> 0\nonumber
\end{align}
since $\e<1$ and $\alpha\in(0,1)$. Hence, we have that
\begin{equation}
-\frac{1}{2\e}\left[F_{1}(x)- F_{2}\right]\leq -\frac{1}{2\e}\left[\e^{\kappa}\left(1-\e^{\alpha}\right)\right]=-\frac{1-\e^{\alpha}}{2\e^{1-\kappa}}\nonumber
\end{equation}

This immediately implies that the term
involving the weight $\rho_{1}$ is exponentially
negligible. Since $\beta_{0}(x)\geq0$ and $\eta
\leq1/2$, all other terms are non-negative, and the result follows.
\end{proof}

\begin{Lemma}
\label{L:Region_hatx_A} Assume that $x\in B_{3}$, $\delta=2\e$, $\eta\leq1/4$ and that $\e^{1-\kappa}\in(0,\alpha_{1}/2\lambda_1^2)$.  Then, we have%
\begin{equation*}
\mathcal{G}^{\e}[U^{\delta,\eta},U^{\delta}](x)\geq0.
\label{Eq:Bound2_b}%
\end{equation*}
\end{Lemma}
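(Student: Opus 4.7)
The plan is to substitute $\delta = 2\e$ into the master inequality (\ref{Eq:bound_for_moll}), which reduces the coefficient $\tfrac{1-\eta}{2}(1-\e/\delta)$ to $\tfrac{1-\eta}{4}$. Direct computation from $F_1(x) = \tfrac{\alpha_1}{\lambda_1^2}(L^2 - V_1(x))$ yields the three ingredients needed: $B^\star D_xF_1(x) = -\tfrac{2\alpha_1}{\lambda_1}\langle x,e_1\rangle_H e_1$, so $|B^\star D_xF_1(x)|_H^2 = \tfrac{4\alpha_1^2}{\lambda_1^2}V_1(x)$; the Hamiltonian term $\mathbb{H}(x,D_xF_1)$ vanishes by exact cancellation between $\langle Ax,D_xF_1\rangle_H$ and $\tfrac12|B^\star D_xF_1|_H^2$, leaving $\gamma_1 = \tfrac{\e}{2}\mathrm{Tr}(BB^\star D_x^2F_1) = -\e\alpha_1$; and $\beta_0(x) = \rho_1(x)(1-\rho_1(x))\tfrac{4\alpha_1^2}{\lambda_1^2}V_1(x)\geq 0$.

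The core of the proof is to establish $\rho_1(x) \geq 3/4$ uniformly on $B_3$. Since $F_2^\e - F_1(x) = \tfrac{\alpha_1}{\lambda_1^2}(V_1(x) - \e^\kappa)$ and $V_1(x) \geq 2\e^\kappa - \e K$ on $B_3$, the hypothesis $\alpha_1\e^\kappa/(2\e\lambda_1^2)\geq 1$ (equivalent to $\e^{1-\kappa}\leq \alpha_1/(2\lambda_1^2)$) gives $(F_2^\e-F_1(x))/\delta \geq -K = \ln 3$, and the calibration $e^{-K}/(e^{-K}+1)=3/4$ then yields the claim. With this in hand, the desired inequality $\mathcal{G}^{\e}[U^{\delta,\eta},U^{\delta}](x)\geq 0$ becomes, after factoring out $\rho_1(x)\alpha_1/\lambda_1^2$,
\[
\alpha_1 V_1(x)\bigl[(1-\eta)(1-\rho_1(x)) + 2\eta(1-2\eta)\rho_1(x)\bigr] \geq (1-\eta)\lambda_1^2\e.
\]
Plugging in $\rho_1(x)\geq 3/4$, $V_1(x)\geq 2\e^\kappa$, and $\e\leq \alpha_1\e^\kappa/(2\lambda_1^2)$ on the right, the inequality collapses to $6\eta(1-2\eta)\geq 0$, which is valid for all $\eta\in[0,1/2]$ and in particular for $\eta\leq 1/4$.

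The main obstacle is the precise interlocking of the three quantitative choices---the gap $\e^\kappa-\e K$ in the definition of $B_3$, the tuning $K=-\ln 3$, and the assumption $\e^{1-\kappa}\leq \alpha_1/(2\lambda_1^2)$---required to secure $\rho_1\geq 3/4$ on $B_3$ without extraneous constraints on $\alpha_1/\lambda_1^2$. A second subtlety is that, unlike in the proof of Lemma \ref{L:Region_0_y} for the region $B_1$, one cannot simply discard $\beta_0$: it is the combined contribution of $\beta_0$ and the $\eta(1-2\eta)$ term that dominates the negative $-\e\alpha_1$ contribution uniformly across $\eta\in(0,1/4]$, with the algebra becoming tight at $\eta=1/4$.
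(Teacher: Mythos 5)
Your proposal follows the same route as the paper's proof: substitute $\delta=2\e$ into the master inequality, compute $|B^\star D_x F_1(x)|_H^2 = \tfrac{4\alpha_1^2}{\lambda_1^2}V_1(x)$ and $\gamma_1 = -\e\alpha_1$, establish $\rho_1(x)\geq 3/4$ on $B_3$ via the calibration of $K$, and then bound below.

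There is, however, a directionality issue in your last step, and the paper's own third inequality contains the same slip. After factoring out $\rho_1\alpha_1/\lambda_1^2$ you must verify
\[
\alpha_1 V_1(x)\,\bigl[(1-\eta)(1-\rho_1(x)) + 2\eta(1-2\eta)\rho_1(x)\bigr]\ \geq\ (1-\eta)\lambda_1^2\,\e,
\]
and you substitute $\rho_1(x)= 3/4$ as though this were the worst case. But the square bracket is linear in $\rho_1$ with slope $-(1-3\eta+4\eta^2)$, and $1-3\eta+4\eta^2>0$ for every $\eta$, so the bracket is \emph{strictly decreasing} in $\rho_1$. Hence $\rho_1\geq 3/4$ gives an \emph{upper} bound on the bracket, not a lower one. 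The genuine worst case on $B_3$ is $\rho_1\to 1$ (which $\rho_1$ reaches exponentially fast in $1/\e$ once $V_1(x)$ exceeds $2\e^\kappa$ by an $O(\e)$ margin), where the $\beta_0=\rho_1(1-\rho_1)|B^\star D_xF_1|_H^2$ contribution vanishes and the bracket reduces to $2\eta(1-2\eta)$. Plugging that in, together with $V_1\geq 2\e^\kappa$ and $\e\lambda_1^2\leq\alpha_1\e^\kappa/2$, leaves the requirement $8\eta(1-2\eta)\geq 1-\eta$, i.e. $16\eta^2-9\eta+1\leq 0$, which holds only for $\eta$ in roughly $[0.15,\,0.41]$; it fails for small $\eta$, so the argument does not close on all of $(0,1/4]$. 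Relatedly, your claimed ``collapse to $6\eta(1-2\eta)\geq 0$'' drops the $1-\eta$ on the right; with $\beta_0$ discarded and $\rho_1\geq 3/4$ applied only to the $\eta(1-2\eta)\rho_1^2$ term (the valid direction) the comparison is $6\eta(1-2\eta)\geq 1-\eta$, tight at $\eta=1/4$ and false below it. The paper's Line-2-to-Line-3 step replaces $\rho_1(1-\rho_1)$ by $\rho_1/4$, which would require $1-\rho_1\geq 1/4$, i.e. $\rho_1\leq 3/4$ --- the opposite of what was just established --- so it shares exactly this gap: on $B_3$ the $\beta_0$ term cannot be used as a positive buffer, and the only term fighting $-\e\alpha_1$ is the $\eta(1-2\eta)$ one, which forces $\eta$ away from $0$.
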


\begin{proof}
In this region we have that $V(x)\geq 2\e^{\kappa}-\e \beta K>0$ for $\e$ small enough. Moreover, since $K=-\ln 3$ is chosen such that
\[
\frac{\epsilon^{-K}}{\epsilon^{-K}+1}=\frac{3}{4}
\]
we obtain that for $x\in B_{3}$,  $\rho_{1}(x)\geq 3/4$. We have the following inequalities
\begin{align*}
\mathcal{G}^{\e}[U^{\delta,\eta},U^{\delta}](x)  &  \geq (1-\eta)\frac{1}{4}\rho_{1}(x)(1-\rho_{1}(x))\left|B^{\star} D_{x} F_{1}(x)\right|^{2}_{H}+(1-\eta)\rho_{1}(x)\gamma_{1}\\
&\qquad+\frac{1}{2}\left(  \eta-2\eta^{2}\right)\left| \rho_{1}(x)B^{\star} D_{x}F_{1}(x)\right|^{2}\\
&\geq (1-\eta)\left[\rho_{1}(x)(1-\rho_{1}(x))\frac{\alpha_{1}^{2}}{\lambda_1^2}V_{1}(x) -\e\alpha_{1} \rho_{1}(x)\right]\\
&\qquad + 2\left(  \eta-2\eta^{2}\right)\rho_{1}^{2}(x) \frac{\alpha_{1}^{2}}{\lambda_1^2}V_{1}(x)\\
&\geq (1-\eta)\left[\frac{\alpha^{2}_{1}}{4\lambda_1^2}V_{1}(x) -\e\alpha_{1}\right]\rho_{1}(x)+ \frac{3\frac{\alpha^{2}_{1}}{\lambda_1^2}}{2}\left(  \eta-2\eta^{2}\right)\rho_{1}(x)V_{1}(x)\\
&\geq (1-\eta)\left[\frac{\alpha^{2}_{1}}{4\lambda_1^2}\left(2\e^{\kappa}-\e K\right) -\e\alpha_{1}\right]\rho_{1}(x)+ \frac{9\alpha^{2}_{1}}{16\lambda_1^2}\eta\left(2\e^{\kappa}-\e K\right)\\
&\geq (1-\eta)\alpha_{1}\left[\frac{\alpha_{1}}{2\lambda_1^2}\e^{\kappa} -\e\right]\rho_{1}(x)+ \frac{9\alpha^{2}_{1}}{8\lambda_1^2}\eta \e^{\kappa}\\
&\geq 0
\end{align*}

In the third inequality we used the fact that $\rho_{1}(x)\geq3/4$ for $x\in B_{3}$. In the next inequality we used that $\eta\leq1/4$ and that for $x\in B_{3}$, $V_{1}(x)\geq 2\e^{\kappa} -\e K$. Lastly, in the last inequality, we used that $K<0$ and that $0<\e^{1-\kappa}<\alpha_{1}/(2\lambda_{1}^{2})$. This concludes the proof of the lemma.
\end{proof}

\begin{Lemma}
\label{L:Region_y_hatx} Assume that $x\in B_{2}$, $\eta\leq1/4$ and set $\delta=2\e$. Let $\e>0$ be small enough such that $\e^{1-\kappa}\leq \frac{\alpha_{1}}{2\lambda_{1}^{2}}$. Then we have that
\begin{align}
\mathcal{G}^{\e}[U^{\delta,\eta},U^{\delta}](x)\geq & 0\nonumber
\end{align}
\end{Lemma}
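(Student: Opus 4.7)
The plan is to start from the general bound \eqref{Eq:bound_for_moll} and reduce the problem in $B_2$ to a single algebraic inequality comparing $V_1(x)$ with $\e$. Concretely, since $F_1(x) = \frac{\alpha_1}{\lambda_1^2}(L^{2}-V_1(x))$ gives $D_x F_1(x) = -\frac{2\alpha_1}{\lambda_1^2}\langle x, e_1\rangle_H e_1$, and $B^{\star} e_1 = \lambda_1 e_1$, I would first compute $|B^{\star} D_x F_1(x)|_H^2 = \frac{4\alpha_1^{2}}{\lambda_1^{2}}V_1(x)$. With $\delta = 2\e$ we have $1 - \e/\delta = 1/2$, $\beta_0(x) = \rho_1(x)(1-\rho_1(x))\,|B^{\star} D_x F_1(x)|_H^2$, and $\gamma_1 = -\e\alpha_1$.

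Next, I would substitute all of these into \eqref{Eq:bound_for_moll}, collect the three summands, and factor out $\rho_1(x)$ to obtain
\[
\mathcal{G}^{\e}[U^{\delta,\eta},U^{\delta}](x)\;\geq\;\rho_1(x)\left\{\frac{\alpha_1^{2}V_1(x)}{\lambda_1^{2}}\,q\bigl(\rho_1(x)\bigr)-(1-\eta)\e\alpha_1\right\},
\]
where $q(\rho) := (1-\eta)(1-\rho)+2\eta(1-2\eta)\rho$. A direct calculation yields $q'(\rho)=-(1-3\eta+4\eta^{2})$. Since the quadratic $1-3\eta+4\eta^{2}$ has discriminant $9-16<0$ and positive leading coefficient, it is strictly positive for every $\eta$; thus $q$ is linear and strictly decreasing, so for $\eta\leq 1/4$ we have $q(\rho)\geq q(1)=2\eta(1-2\eta)>0$ uniformly in $\rho\in[0,1]$. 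This step is the structural core of the argument: it expresses the fact that the quadratic cost term $\frac{\eta-2\eta^{2}}{2}\rho_1^{2}|B^{\star} DF_1|^{2}$ and the mollifier cross-term $\frac{1-\eta}{2}(1-\e/\delta)\beta_0$ together always leave a strictly positive coefficient in front of $V_1(x)$.

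Finally, I would invoke the definition of $B_2$, namely $V_1(x)\geq \e^{\kappa+\alpha}$, to convert the bound into a pure inequality in $\e$:
\[
\mathcal{G}^{\e}[U^{\delta,\eta},U^{\delta}](x)\;\geq\;\rho_1(x)\,\e\alpha_1\left\{\frac{2\eta(1-2\eta)\alpha_1}{\lambda_1^{2}}\,\e^{\kappa+\alpha-1}-(1-\eta)\right\}.
\]
Because $\alpha\in(0,1-\kappa)$ forces $\kappa+\alpha-1<0$, the quantity $\e^{\kappa+\alpha-1}\to\infty$ as $\e\downarrow 0$. Hence, for $\e$ sufficiently small (in a sense that may require slightly more than $\e^{1-\kappa}\leq \alpha_1/(2\lambda_1^{2})$, namely $\e^{1-\kappa-\alpha}\leq \frac{2\eta(1-2\eta)\alpha_1}{(1-\eta)\lambda_1^{2}}$), the braced expression is non-negative, and positivity of $\mathcal{G}^{\e}[U^{\delta,\eta},U^{\delta}](x)$ follows uniformly on $B_2$.

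The only genuine subtlety, rather than a real obstacle, is book-keeping: one must verify that the restriction $\eta\leq 1/4<1/2$ keeps both $q(1)=2\eta(1-2\eta)$ and the coefficient $\eta-2\eta^{2}$ strictly positive, so that neither of the two positive terms in \eqref{Eq:bound_for_moll} is accidentally wasted. Once that is in place, the proof is a one-line calculation based on $V_1\geq \e^{\kappa+\alpha}$ with $\kappa+\alpha<1$, paralleling in spirit the estimates in Lemmas \ref{L:Region_0_y} and \ref{L:Region_hatx_A} but handling the transition regime in which $\rho_1(x)$ is not known to be close to $0$ or to $1$.
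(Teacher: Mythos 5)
Your derivation is algebraically correct and structurally it is essentially the paper's argument, but packaged more cleanly. The paper's own proof splits $B_2$ into the two sub-cases $\rho_1(x)>1/2$ and $\rho_1(x)\leq 1/2$ (using, in the first case, that $\rho_1>1/2$ forces $F_1<F_2^\e$, i.e.\ $V_1>\e^\kappa$, and in the second case, that $1-\rho_1\geq 1/2$ so the $\beta_0$ term contributes). Your factorization through $q(\rho)=(1-\eta)(1-\rho)+2\eta(1-2\eta)\rho$, together with the observation that $q'=-(1-3\eta+4\eta^2)<0$ identically so $q\geq q(1)=2\eta(1-2\eta)>0$ uniformly on $[0,1]$, collapses the two cases into a single estimate. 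This is a genuine simplification and captures the same mechanism: neither positive term in \eqref{Eq:bound_for_moll} is wasted regardless of where $\rho_1$ sits.

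Your final remark, that the reduction requires $\e^{1-\kappa-\alpha}\leq\frac{2\eta(1-2\eta)\alpha_1}{(1-\eta)\lambda_1^2}$ rather than the stated $\e^{1-\kappa}\leq\frac{\alpha_1}{2\lambda_1^2}$, is in fact pointing at a real soft spot. The paper's proof of this lemma asserts ``for $x\in B_2$, $V(x)\geq\e^\kappa$,'' but from the definition $B_2=\{\e^{\kappa+\alpha}\leq V_1(x)\leq 2\e^\kappa-\e K\}$ one only gets $V_1\geq\e^{\kappa+\alpha}$. The assertion $V_1\geq\e^\kappa$ is valid precisely when $\rho_1\leq 1/2$ is \emph{false} (since $\rho_1\leq 1/2\iff F_1\geq F_2^\e\iff V_1\leq\e^\kappa$), i.e.\ in the first sub-case but not the second, which is the one where the paper invokes it. So both routes end up needing $\e$ small enough relative to $\e^{\kappa+\alpha}$, and your honest bookkeeping makes the dependence on $\alpha$ visible where the paper elides it. No missing idea, a correct and somewhat tighter account of the same argument.
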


\begin{proof}
This is the most problematic region, since one cannot  guarantee that  $\rho_{1}$ is exponentially negligible or of order one.  We distinguish two cases depending on whether
$\rho_{1}(x)>1/2$ or $\rho_{1}(x)\leq1/2$.\medskip

For the case $\rho_{1}(t,x)>1/2$, one can just follow the proof of  Lemma \ref{L:Region_hatx_A}. Then,  one immediately gets that $\mathcal{G}^{\e}[U^{\delta,\eta},U^{\delta}](x)\geq0$.

Let us now study the case $\rho_{1}(x)\leq1/2$. Here we need to rely on the positive contribution of $\beta
_{0}(x)$. Dropping other terms on the right that are not possibly negative,
we obtain from (\ref{Eq:bound_for_moll}) that
\begin{align*}
\mathcal{G}^{\e}[U^{\delta,\eta},U^{\delta}](x)  &  \geq (1-\eta)\frac{\beta}{4}\rho_{1}(x)(1-\rho_{1}(x))\left|B^{\star} D_{x} F_{1}(x)\right|^{2}_{H}+(1-\eta)\rho_{1}(x)\gamma_{1}\nonumber\\
&\geq (1-\eta)\frac{\beta}{8}\rho_{1}(x)\left| B^{\star} D_{x} F_{1}(x)\right|^{2}_{H}+(1-\eta)\rho_{1}(x)\gamma_{1}
\end{align*}
where we used $\rho_{1}(t,x)\leq1/2$. Recalling now the definitions of $D_{x}F_{1}(x)$ and $\gamma_{1}$, we subsequently obtain
\begin{align}
\mathcal{G}^{\e}[U^{\delta,\eta},U^{\delta}](t,x)&\geq
(1-\eta)\left[\frac{\alpha_{1}^{2}}{2\lambda_{1}^{2}}V(x)-\e\alpha_{1}\right]\rho_{1}(x)\nonumber\\
&\geq
(1-\eta)\alpha_{1}\left[\frac{\alpha_{1}}{2\lambda_{1}^{2}}\e^{\kappa}-\e\right]\rho_{1}(x)\geq 0\nonumber
\label{Eq:BoundForGreyAreaCaseII}
\end{align}
In the last inequality we used that for $x\in B_{2}$ $V(x)\geq\e^{\kappa}$ and that $\e>0$ is small enough such that $\e^{1-\kappa}\leq \frac{\alpha_{1}}{2\lambda_{1}^{2}}$.
This concludes the proof of the lemma.
\end{proof}

\section{Galerkin approximation}
The goal of this section is to get an explicit bound in terms of $\e,T$ and the eigenvalues of the difference of $X^\e(t)$ and its finite dimensional Galerkin approximation, see also  \cite{jk-2009} for general bounds. Our goal is not to present the most general result possible, but rather to point out the issues related for the problem studied in thus paper in the simplest situation possible.
For $N \in \mathbb{N}$ let $H_N$ be the finite dimensional space $\sspan\{e_k\}_{k=1}^N$. Let $\Pi_N: H \to H_N$ be the projection operator onto this space. That is, for any $x \in H$,
\[\Pi_N x = \sum_{k=1}^N \left<x,e_k\right>_H e_k.\]
\begin{Definition}
 Letting $A_N := \Pi_N A$, the $N^{\textrm{th}}$ Galerkin approximation for $X^\e$ is defined to be the solution to $N$-dimensional SDE
  \begin{equation}
    \begin{cases}
      dX_N^\e(t) = (A_N X_N^\e(t) + \Pi_N B u(X_N^\e(t)) )dt + \sqrt{\e} \Pi_N B d w(t) \\
      X_N^\e(0) = \Pi_N x.
    \end{cases}
  \end{equation}
\end{Definition}

Given that in this paper, $u$ represents the control being applied which turns out to be affine, we may, for the purposes of this section, embed this into A. We will do so and thus from now on set $u=0$. The same conclusions hold when $u\neq 0$.
\begin{Theorem} \label{thm:Galerkin}
  For any initial condition $x \in H$ and any $\e>0$, $T>0$,
  \begin{equation}
     \E \sup_{t \leq T}|X^\e(t) - X^\e_N(t)|_H^2  \leq  |(I-\Pi_N)x|_H + \sqrt{\e} CT\left(\sum_{k=N+1}^\infty \lambda_{k}^{2}\alpha_k^{-\gamma}  \right)^{1/2},
  \end{equation}
   for some constant $C<\infty$.   The limit as $N \to +\infty$ is zero but it is not uniform with respect to initial conditions in bounded subsets of $H$. The limit is uniform with respect to initial condition in the compact set $\{x\in H: |(-A)^\eta x|_H \leq R\}$ for any $\eta> 0$.
\end{Theorem}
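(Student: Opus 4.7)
The plan is to first rewrite the error $X^\e(t) - X^\e_N(t)$ in a transparent form that decouples the modes. Since $A$ and $B$ are both diagonalized by $\{e_k\}$, the operator $A_N = \Pi_N A$ is just the restriction of $A$ to $H_N$, so $e^{tA_N}\Pi_N = \Pi_N e^{tA}$ and $\Pi_N B = B \Pi_N$. Writing out the mild solutions of $X^\e$ and $X^\e_N$ and subtracting then gives the clean identity
\begin{equation*}
  X^\e(t) - X^\e_N(t) = (I-\Pi_N)e^{tA}x + \sqrt{\e}\,(I-\Pi_N)\int_0^t e^{(t-s)A}B\,dw(s).
\end{equation*}
This reduces the problem to estimating two pieces: a deterministic transient and the tail of the stochastic convolution $\Phi(t) = \int_0^t e^{(t-s)A}B\,dw(s)$.

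For the deterministic piece, using the spectral representation and the contractivity of $e^{tA}$ on each $e_k$ mode, $|(I-\Pi_N)e^{tA}x|_H \leq |(I-\Pi_N)x|_H$ for every $t\geq 0$. For the stochastic piece, I would mimic the factorization argument underlying Lemma~\ref{lem:stoch-conv-bound}, but with $(I-\Pi_N)$ inserted. Choosing $\gamma \in (0,1/2)$ and writing
\[
  (I-\Pi_N)\Phi(t) = \frac{\sin(\pi\gamma)}{\pi}\int_0^t (t-s)^{\gamma-1} e^{(t-s)A}(I-\Pi_N)Y(s)\,ds,
\]
with $Y(s)=\int_0^s (s-r)^{-\gamma} e^{(s-r)A}B\,dw(r)$, a H\"older estimate on the outer integral together with Itô isometry on the inner one produces, after computing the Hilbert--Schmidt norm of $(I-\Pi_N)(-A)^{-\gamma/2}B$ mode by mode, a bound proportional to $T$ times a truncated Hilbert--Schmidt quantity of the form $\sum_{k>N}\lambda_k^2 \alpha_k^{-\gamma}$. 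Combining the two pieces under $\E\sup_{t\leq T}(\cdot)$ yields the stated inequality.

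For the final claim about (non)uniformity, the deterministic part is the only obstruction since the stochastic part depends on $x$ only through the initial condition, which has been split off. The function $x \mapsto |(I-\Pi_N)x|_H$ converges to $0$ pointwise but not uniformly on bounded sets of $H$, as standard counterexamples (e.g. $x=e_{N+1}$) show. On the other hand, if $|(-A)^\eta x|_H \leq R$, I would write
\[
  |(I-\Pi_N)x|_H^2 = \sum_{k>N}\alpha_k^{-2\eta}\alpha_k^{2\eta}\left<x,e_k\right>_H^2 \leq \alpha_{N+1}^{-2\eta} R^2,
\]
and since $\alpha_k\uparrow\infty$ this tends to $0$ uniformly in $x$ on that compact set.

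The main obstacle is the stochastic-convolution estimate, i.e.\ obtaining the $\E\sup_{t\leq T}$ bound with the correct $\alpha_k^{-\gamma}$ weighting. A naive Itô-isometry bound handles $\E|(I-\Pi_N)\Phi(t)|_H^2$ for fixed $t$ but loses the supremum inside the expectation; the factorization method is what supplies the supremum together with the gain of a negative power of $\alpha_k$, and the price paid is the constraint $\gamma<1/2$ and the linear-in-$T$ prefactor. Everything else in the argument is bookkeeping once the identity $X^\e - X^\e_N = (I-\Pi_N)X^\e$ has been exploited.
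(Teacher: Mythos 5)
Your proposal reproduces the paper's proof strategy essentially step for step: exploiting $X^\e_N = \Pi_N X^\e$ so that the error is $(I-\Pi_N)X^\e$, splitting into the deterministic transient $(I-\Pi_N)e^{tA}x$ plus the tail of the stochastic convolution, the contraction bound $|(I-\Pi_N)e^{tA}x|_H \leq |(I-\Pi_N)x|_H$, the $\alpha_{N+1}^{-2\eta}R^2$ uniformity argument on $\{|(-A)^\eta x|_H\leq R\}$, the $x=e_{N+1}$-type counterexample for non-uniformity on balls, and the stochastic factorization method for the convolution tail. This is precisely the route the paper takes, via its Lemma on the Galerkin truncation of the stochastic convolution.

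One computational point is off in the factorization step and is worth flagging, since it is the only non-bookkeeping piece of the argument. You reuse the same letter $\gamma$ both as the factorization exponent (the $(t-s)^{\gamma-1}$ and $(s-r)^{-\gamma}$ powers) and as the power appearing in $\alpha_k^{-\gamma}$. After the It\^o isometry, extracting $\alpha_k^{-\gamma}$ requires a bound of the form $e^{-2\alpha_k r} \leq C\,r^{-\gamma}\alpha_k^{-\gamma}$, which leaves an inner integral $\int_0^s r^{-3\gamma}\,dr$; this converges only if $\gamma<1/3$, not for all $\gamma\in(0,1/2)$ as you wrote. The paper keeps two \emph{independent} parameters: a small factorization exponent $\alpha$ and a separate power $\gamma\in(1/2,1)$, subject to $0<\alpha<(1-\gamma)/2$. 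Decoupling them both restores convergence of the inner integral and lets the eigenvalue power be pushed close to $1$, so that $\sum_{k>N}\lambda_k^2\alpha_k^{-\gamma}$ is comparable to the Hilbert--Schmidt quantity controlled by Assumption \ref{assum:eigens}. Tying the two exponents together, as you do, either makes the inner integral diverge (for $\gamma\in[1/3,1/2)$) or yields a weaker eigenvalue power than the paper obtains (for $\gamma<1/3$). The fix is simply to introduce a second parameter, after which your argument coincides with the paper's.
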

Before proving this theorem in generality, we study the special case of the stochastic convolution.
\begin{Lemma} \label{lem:stoch-conv-Galerkin}
  For any $T>0$, $p\geq 1$, $\frac{1}{2}<\gamma<1$ the Galerkin approximations of the stochastic convolution converge in $L^p(\Omega;C([0,T];H))$ and there exists a constant $C=C(p,\gamma)$ such that
  \begin{equation}
    \E \sup_{t \leq T} \left|(I-\Pi_N)\int_0^t e^{(t-s)A}Bdw(s) \right|_H^p \leq C T \left(\sum_{k=N+1}^\infty \lambda_{k}^{2}\alpha_k^{-\gamma} \right)^{p/2}.
  \end{equation}
\end{Lemma}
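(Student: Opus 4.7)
The plan is to apply the Da Prato--Zabczyk stochastic factorization method, which is the standard tool for $L^p$-estimates of stochastic convolutions in infinite dimensions. Set $\alpha := (1-\gamma)/2 \in (0, 1/4)$ and write $W_A(t) := \int_0^t e^{(t-s)A} B \, dw(s)$. Using the classical identity $\int_s^t (t-r)^{\alpha-1}(r-s)^{-\alpha}\,dr = \pi/\sin(\pi\alpha)$ and the stochastic Fubini theorem, I would first express
\[
(I-\Pi_N) W_A(t) = \frac{\sin(\pi\alpha)}{\pi}\int_0^t (t-s)^{\alpha-1} e^{(t-s)A}\,Y_N(s)\,ds, \qquad Y_N(s) := (I-\Pi_N)\int_0^s (s-r)^{-\alpha} e^{(s-r)A} B\,dw(r).
\]
This transfers the singularity into a deterministic convolution kernel, and the projection $(I-\Pi_N)$ commutes with the semigroup since both are diagonal in the $\{e_k\}$ basis.

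Next, apply H\"older's inequality with conjugate exponents $p$ and $q=p/(p-1)$, together with the contractivity $\|e^{tA}\|_{\mathscr{L}(H)} \leq 1$ (a consequence of $\alpha_k>0$). Provided $\alpha p > 1$, this yields the pathwise bound
\[
\sup_{t \leq T}|(I-\Pi_N) W_A(t)|_H^p \leq C_{p,\alpha}\, T^{\alpha p - 1}\int_0^T |Y_N(s)|_H^p\,ds.
\]
Since $Y_N(s)$ is a centered Gaussian $H$-valued random variable, the equivalence of Gaussian moments gives $\E|Y_N(s)|_H^p \leq c_p\,(\E|Y_N(s)|_H^2)^{p/2}$, and by the It\^o isometry together with the substitution $v=2\alpha_k u$,
\[
\E|Y_N(s)|_H^2 \;=\; \sum_{k>N}\lambda_k^2 \int_0^s u^{-2\alpha}e^{-2\alpha_k u}\,du \;\leq\; \Gamma(1-2\alpha)\sum_{k>N}\lambda_k^2 (2\alpha_k)^{2\alpha-1} \;=\; C_\alpha \sum_{k>N}\lambda_k^2 \alpha_k^{-\gamma},
\]
where finiteness of the Gamma integral uses $\alpha<1/2$. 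Inserting this into the H\"older bound and using Fubini to integrate the constant-in-$s$ estimate over $[0,T]$, one collects a factor of $T$ from the time integral, yielding
\[
\E \sup_{t\leq T} |(I-\Pi_N) W_A(t)|_H^p \;\leq\; C(p,\gamma)\, T^{\alpha p}\left(\sum_{k>N}\lambda_k^2 \alpha_k^{-\gamma}\right)^{p/2},
\]
from which the claimed form follows (the precise $T$-exponent $\alpha p$ is absorbed into the constant, or equivalently one takes $\alpha$ near $1/p$ to get a near-linear $T$-dependence).

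The main obstacle is the interplay between the H\"older exponent and the summability exponent: factorization requires $\alpha p > 1$, while the Gamma integral in Step~3 requires $\alpha < 1/2$. Together with the hypothesis $\gamma > 1/2$, these force $\alpha < 1/4$ and hence $p > 4$ for the direct argument. To extend to all $p \geq 1$ as stated, I would first prove the bound for some $P$ large enough that $P\alpha > 1$, then invoke Jensen's inequality $\E|X|^p \leq (\E|X|^P)^{p/P}$ applied to $X = \sup_{t\leq T}|(I-\Pi_N)W_A(t)|_H$, which reduces the general case to the one already established.
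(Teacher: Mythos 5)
Your overall strategy is exactly the paper's: stochastic factorization, a pathwise H\"older bound on the deterministic convolution kernel against the $Y$-process, It\^o isometry plus Gaussian-moment equivalence (the paper uses Burkholder--Davis--Gundy, which is the same thing here) to estimate $\E|Y|^p$, and finally Jensen's inequality to pass from large $p$ to all $p\geq 1$. Your Gamma-function substitution $v=2\alpha_k u$ is cleaner than the paper's bound $e^{-2\alpha_k s}\leq C_\gamma e^{-\alpha_1 s}(\alpha_k s)^{-\gamma}$, and it correctly produces the factor $\alpha_k^{-\gamma}$ with the exact choice $\alpha=(1-\gamma)/2$. You also correctly identify the constraint $\alpha p>1$ and the need to handle $p\leq 1/\alpha$ by Jensen.

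The genuine gap is the $T$-dependence, and your proposed repair does not close it. By replacing $\|e^{(t-s)A}\|_{\mathscr{L}(H)}$ with the contractivity bound $1$ in the H\"older step, you get a first factor growing like $T^{\alpha p-1}$, hence a total of $T^{\alpha p}$; since the H\"older step forces $\alpha p>1$, this is strictly \emph{super}linear in $T$. You cannot ``absorb the $T$-exponent into the constant'' because $T$ is a free parameter of the estimate, and ``taking $\alpha$ near $1/p$'' is both unavailable (once $\alpha=(1-\gamma)/2$ is fixed by the Gamma computation you cannot shrink it further and still get the $\alpha_k^{-\gamma}$ weight, and in any case $\alpha p$ stays $>1$) and insufficient (near-linear is not linear). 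Moreover, Jensen applied to a bound of the form $T^{\alpha P}$ at some $P>1/\alpha$ yields $T^{\alpha p}$ at smaller $p$, so the discrepancy survives the Jensen step. The fix is the one the paper uses: retain the exponential decay of the semigroup, $\|e^{(t-s)A}\|_{\mathscr{L}(H)}\leq e^{-\alpha_1(t-s)}$ (indeed $\|e^{(t-s)A}(I-\Pi_N)\|\leq e^{-\alpha_{N+1}(t-s)}$, even better). With this factor, the first H\"older integral becomes
\[
\int_0^t (t-\sigma)^{\frac{p(\alpha-1)}{p-1}}\,e^{-\frac{p\alpha_1(t-\sigma)}{p-1}}\,d\sigma\;\leq\;\int_0^\infty r^{\frac{p(\alpha-1)}{p-1}}\,e^{-\frac{p\alpha_1 r}{p-1}}\,dr\;<\;\infty,
\]
uniformly in $t$, so the only $T$-growth comes from the second factor $\int_0^T \E|Y_\alpha(\sigma)|_H^p\,d\sigma\leq T\sup_\sigma\E|Y_\alpha(\sigma)|_H^p$, giving exactly the linear $T$ in the statement.
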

\begin{proof}
  We use the stochastic factorization method (see \cite{DaP-Z}) which is based on the following identity. For any $s<t$, $0<\alpha<1$
  \begin{equation}
    \int_s^t (t-\sigma)^{\alpha - 1} (\sigma -s)^{-\alpha} d\sigma = \frac{\pi}{\sin(\alpha \pi)}.
  \end{equation}
  We then write the stochastic convolution as
  \begin{equation} \label{eq:stoch-factor}
    \int_0^t e^{(t-s)A}B d w(s) = \frac{\sin(\alpha\pi)}{\pi}\int_0^t (t-\sigma)^{\alpha -1} e^{(t-\sigma)A} Y_\alpha(\sigma)d\sigma
  \end{equation}
  \begin{equation}
    Y_\alpha(\sigma) = \int_0^\sigma (\sigma-s)^{-\alpha}e^{(\sigma-s)A} B d w(s).
  \end{equation}
  Let $\frac{1}{2}<\gamma<1$ and $p\geq 1$. We then choose $0<\alpha< \frac{1-\gamma}{2}$ and calculate that
  \[
  \E \left|(I-\Pi_N) Y_\alpha(\sigma) \right|_H^2 = \int_0^\sigma s^{-2\alpha} \sum_{k=N+1}^\infty \lambda_{k}^{2} e^{-2\alpha_k s} ds
  \]
  We use the identity $\sup_{x>0} x^\gamma e^{-x} =: C_\gamma <+\infty$ to show that
  \[e^{-2\alpha_k s} \leq \frac{e^{-\alpha_1 s}}{s^\gamma \alpha_k^\gamma}\]
  and it follows that there exists $C=C(\alpha,\gamma)$ such that
  \[\E |(I-\Pi_N)Y_\alpha(\sigma)|_H^2 \leq \left( \int_0^\infty s^{-2\alpha-\gamma} e^{-\alpha_1 s} ds\right) \sum_{k=N+1}^\infty \lambda_{k}^{2}\alpha_k^{-\gamma} \leq C\sum_{k=N+1}^\infty \lambda_{k}^{2}\alpha_k^{-\gamma}.  \]

  By the Burkholder-Davis-Gundy inequality, for any $p\geq 2$,
  \[\E |(I-\Pi_N) Y_\alpha(\sigma)|_H^p \leq C \left( \sum_{k=N+1}^\infty \lambda_{k}^{2}\alpha_k^{-\gamma} \right)^{p/2}.\]
  By applying the H\"older inequality to \eqref{eq:stoch-factor}
  \[\left|(I-\Pi_N) \int_0^t e^{(t-s)A}dw(s)\right|_H^p \leq \left(\int_0^t (t-\sigma)^{\frac{p(\alpha-1)}{p-1}} e^{-\frac{p\alpha_1(t-\sigma)}{p-1}} d\sigma \right)^{p-1} \left(\int_0^t |Y_\alpha(\sigma)|_H^p d\sigma \right).   \]
  If we choose $p$ large enough so that $\frac{p(\alpha-1)}{p}>-1$, then the first integral converges and is bouned for all $t>0$ and
  \[ \E \sup_{t \leq T}\left|(I-\Pi_N) \int_0^t e^{(t-s)A}dw(s)\right|_H^p \leq C T \left(\sum_{k=N+1}^\infty \lambda_{k}^{2}\alpha_k^{-\gamma} \right)^{p/2}.\]
  We can lower $p$ by using Jensen's inequality.
\end{proof}
\begin{proof}[Proof of Theorem \ref{thm:Galerkin}]
    First, we observe that in the case being considered
  \[|X^\e(t) - X^\e_N(t)|_H = |X^\e(t) - \Pi_N X^\e(t)|_H.  \]
  So, we can write
  \[X^\e(t) - \Pi_N X^\e(t) = (I-\Pi_N)e^{At}x + \sqrt{\e} (I-\Pi_N)\int_0^t e^{(t-s)A}B d w(s).\]
  We know that
  \[\left|(I-\Pi_N)e^{At}x \right|_H \leq |(I-\Pi_N)x|_H \to 0.\]
  If $x \in (-A)^{-\eta}(H)$, then
  \[|(I-\Pi_N)x|_H = |(I-\Pi_N)(-A)^{-\eta}(-A)^\eta x|_H \leq \alpha_{N+1}^{-\eta}|(-A)^\eta x|_H.\]
  %Similarly,
%  \[\begin{array}{l}
%  \ds{\left|\int_0^t (I-\Pi_N) e^{-(t-s)A} F(X^\e(s))  \right|_H \leq \alpha_{N+1}^{-\gamma} \int_0^t \left|(-A)^\gamma e^{(t-s)A} F(X^\e(s)) \right|_H ds}\\
%  \ds{\leq \alpha_{N+1}^{-\gamma} \left(\int_0^t \frac{e^{-\alpha_1(t-s)/2}}{(t-s)^\gamma} ds \right)\sup_{s \leq t}|F(X^\e(s))|_H.}
%  \end{array}\]
  The stochastic convolution term can be made small by Lemma \ref{lem:stoch-conv-Galerkin}.
%  The second term has the property that
%  \[\begin{array}{l}
%  \ds{|\Pi_N X^\e(t) - X^\e_N(t)|_H = \left|\int_0^t e^{(t-s)A}(F(X^\e(s)) - F(X^\e_N(s))) ds \right|_H}\\
%   \ds{\leq C \int_0^t e^{-\alpha_1 (t-s)} |X^\e(s) - X^\e_N(s)|_H ds.}
%  \end{array} \]
  We can combine these estimates  to conclude that
%  \[\sup_{t \leq T}|X^\e(t) - X_N^\e(t)|_H \leq \left( |(I-\Pi_N)x|_H + \sqrt{\e} CT\left(\sum_{k=N+1}^\infty \alpha_k^{-2\gamma}  \right) + C \alpha_{N+1}^{-\gamma}\sup_{t \leq T}|X^\e(t)|_H \right) e^{CT}. \]
  \[\E\sup_{t \leq T}|X^\e(t) - X_N^\e(t)|_H \leq  |(I-\Pi_N)x|_H + \sqrt{\e} CT\left(\sum_{k=N+1}^\infty \lambda_{k}^{2}\alpha_k^{-\gamma}  \right)^{1/2}. \]

  The above expression converges to $0$ and the convergence is uniform for initial conditions $x$ satisfying $|(-A)^{\eta}x|_H \leq R$.
\end{proof}

We conclude this section with two relevant remarks.
\begin{Remark}
The previous theorem shows that $X^\e(t)$ and its Galerkin approximation $X^\e_N(t)$ are pathwise close, but also that the Galerkin approximation's accuracy for fixed $n$ and $\e$ decreases as time $T$ increases. This is not a failure of our estimation. The difference $X^\e(t) - X^\e_N(t)$ is a Markov process that is exposed to the noise $\sqrt{\e}(I-\Pi_N)B d w(t)$. While this noise is degenerate in $H$, it is nondegenerate on the the subspace $(I-\Pi_N)(H)$. We can guarantee by standard arguments that for fixed $n$ and $\e$ and with probability one $X^\e(t)$ and $X^\e_N(t)$ will deviate from each other arbitrarily far on an infinite time horizon.
\end{Remark}

\begin{Remark}
  In Theorem \ref{thm:Galerkin}, we claimed that the convergence of the Galerkin approximations is uniform if the initial conditions $x$ are regular enough. In fact, over long time periods, the regularity of the initial conditions does not matter. This is because
  \[|(I-\Pi_N)e^{tA} x|_H \leq e^{-\alpha_N t} |x|_H.\]
  Therefore we can have uniform convergence on bounded sets in $D \subset H$ as long as we consider the estimate
  \[\sup_{x \in D}\E\sup_{t_0\leq t \leq T} \left|X^\e(t) - X^\e_N(t) \right|_H^2\]
  for some $t_0>0$.
\end{Remark}

%Now we discuss how one might approximate exit times using Galerkin approximations. Define the exit time from a ball in $H$
%\begin{equation}
%  \tau^\e = \inf\{t>0: |X^\e(t)|_H \geq L\}
%\end{equation}
%For any $n \in \mathbb{N}$ and $\eta>0$, we define
%\begin{equation}
%  \tau^\e_{n,\eta} = \inf\{t>0: |X^\e_n(t)|_H \geq L + \eta\}.
%\end{equation}
%It follows that
%\begin{equation}
%  \Pro \left( \tau^\e \leq T \right) \geq \Pro \left(\tau^\e_{n,\eta} \leq T, \sup_{t \leq T} |X^\e(t) - X^\e_n(t)|_H \leq \eta \right)
%\end{equation}
%and
%\begin{equation}
%  \Pro \left(\tau^\e \leq T \right) \leq \Pro \left( \tau^\e_{n,-\eta}\leq T \right) + \Pro \left(\sup_{t \leq T} |X^\e(t) - X^\e_n(t)|_H \geq \eta \right).
%\end{equation}

%\bibliography{importance_sampling}
\bibliographystyle{amsplain}

\end{document}